\documentclass[a4paper,12pt]{preprint}
\usepackage{amsfonts,amssymb, times}
\usepackage{amsmath,amsthm}
\usepackage{mhequ}
\usepackage{color}
\newtheorem{theorem}{Theorem}[section]
\newtheorem{lemma}[theorem]{Lemma}
\newtheorem{proposition}[theorem]{Proposition}
\newtheorem{corollary}[theorem]{Corollary}

\theoremstyle{definition}
\newtheorem{definition}{Definition}[section]

\theoremstyle{remark}
{\newtheorem{remark}[theorem]{Remark}

\numberwithin{equation}{section}
\def\sectionmark#1{\markboth{}{}}

\begin{document}

\def\ssm{\smallsetminus}
\newcommand{\tmname}[1]{\textsc{#1}}
\newcommand{\tmop}[1]{\operatorname{#1}}
\newcommand{\tmsamp}[1]{\textsf{#1}}
\newenvironment{enumerateroman}{\begin{enumerate}[i.]}{\end{enumerate}}
\newenvironment{enumerateromancap}{\begin{enumerate}[I.]}{\end{enumerate}}

\newcounter{problemnr}
\setcounter{problemnr}{0}
\newenvironment{problem}{\medskip
  \refstepcounter{problemnr}\small{\bf\noindent Problem~\arabic{problemnr}\ }}{\normalsize}
\newenvironment{enumeratealphacap}{\begin{enumerate}[A.]}{\end{enumerate}}
\newcommand{\tmmathbf}[1]{\boldsymbol{#1}}

\def\paral{/\kern-0.55ex/}
\def\parals_#1{/\kern-0.55ex/_{\!#1}}
\def\bparals_#1{\breve{/\kern-0.55ex/_{\!#1}}}
\def\n#1{|\kern-0.24em|\kern-0.24em|#1|\kern-0.24em|\kern-0.24em|}
\def\j{\jmath}
\newcommand{\A}{{\bf \mathcal A}}
\newcommand{\B}{{\bf \mathcal B}}
\def\C{\mathbb C}
\def\D{\mathcal D}
\newcommand{\dom}{{\mathcal D}om}
\newcommand{\pathR}{{\mathcal{\rm I\!R}}}
\newcommand{\Nabla}{{\bf \nabla}}
\newcommand{\E}{{\mathbb E}}
\newcommand{\Epsilon}{{\mathcal E}}
\newcommand{\F}{{\mathcal F}}
\newcommand{\G}{{\mathcal G}}
\def\g{{\mathfrak g}}
\newcommand{\HH}{{\mathcal H}}
\def\h{{\mathfrak h}}
\def\k{{\mathfrak k}}
\newcommand{\I}{{\mathcal I}}
\def\LL{{\mathbb L}}
\def\law{\mathop{\mathrm{ Law}}}
\def\m{{\mathfrak m}}
\newcommand{\K}{{\mathcal K}}
\newcommand{\p}{{\mathfrak p}}
\newcommand{\PP}{{\mathbb P}}
\newcommand{\R}{{\mathbb R}}
\newcommand{\Rc}{{\mathcal R}}
\def\T{{\mathcal T}}
\def\M{{\mathcal M}}
\def\N{{\mathcal N}}
\newcommand{\pnabla}{{\nabla\!\!\!\!\!\!\nabla}}
\def\X{{\mathbb X}}
\def\Y{{\mathbb Y}}
\def\L{{\mathcal L}}
\def\1{{\mathbf 1}}
\def\half{{ \frac{1}{2} }}
\def\vol{{\mathop {\rm vol}}}
\def\euc{{\mathop {\rm eul}}}

\newcommand{\term}{{T}}
\newcommand{\tm}{{t}}
\newcommand{\stm}{{s}}
\newcommand{\pole}{{y_0}}

\def\le{\leq}
\def\ge {\geq}
\def\ad{{\mathop {\rm ad}}}
\def\Conj{{\mathop {\rm Ad}}}
\def\Ad{{\mathop {\rm Ad}}}
\newcommand{\const}{\rm {const.}}
\newcommand{\eg}{\textit{e.g. }}
\newcommand{\as}{\textit{a.s. }}
\newcommand{\ie}{\textit{i.e. }}
\def\s.t.{\mathop {\rm s.t.}}
\def\esssup{\mathop{\rm ess\; sup}}
\def\Ric{\mathop{\rm Ric}}
\def\div{\mathop{\rm div}}
\def\ker{\mathop{\rm ker}}
\def\Hess{\mathop{\rm Hess}}
\def\Image{\mathop{\rm Image}}
\def\Dom{\mathop{\rm Dom}}
\def\id{\mathop {\rm id}}
\def\Image{\mathop{\rm Image}}
\def\Cyl{\mathop {\rm Cyl}}
\def\Conj{\mathop {\rm Conj}}
\def\Span{\mathop {\rm Span}}
\def\trace{\mathop{\rm trace}}
\def\ev{\mathop {\rm ev}}
\def\supp{{\mathrm supp}}
\def\Ent{\mathop {\rm Ent}}
\def\HS{\mathop{\rm HS}}
\def\tr{\mathop {\rm tr}}
\def\graph{\mathop {\rm graph}}
\def\loc{\mathop{\rm loc}}
\def\so{{\mathfrak {so}}}
\def\su{{\mathfrak {su}}}
\def\u{{\mathfrak {u}}}
\def\o{{\mathfrak {o}}}
\def\pp{{\mathfrak p}}
\def\gl{{\mathfrak gl}}
\def\hol{{\mathfrak hol}}
\def\z{{\mathfrak z}}
\def\t{{\mathfrak t}}
\def\<{\langle}
\def\>{\rangle}
\def\span{{\mathop{\rm span}}}
\def\diam{\mathrm {diam}}
\def\inj{\mathrm {inj}}
\def\Lip{\mathrm {Lip}}
\def\Iso{\mathrm {Iso}}
\def\Osc{\mathop{\rm Osc}}
\renewcommand{\thefootnote}{}
\def\supp{\mathrm {Supp}}
\def\V{\mathbb V}
\def\vol{{\mathop {\rm vol}}}
\def\cut{{\mathop {\rm Cut}}}
\def\Lip{{\mathop {\rm Lip}}}
\def\ric{\mathop {\rm ric}}
\def\le{\leq}
\def\ge{\geq}
\def\paral{/\kern-0.55ex/}
\def\parals_#1{/\kern-0.55ex/_{\!#1}}
\def\bparals_#1{\breve{/\kern-0.55ex/_{\!#1}}}
\def\n#1{|\kern-0.24em|\kern-0.24em|#1|\kern-0.24em|\kern-0.24em|}
\def\f{\frac}
\title{ Hessian formulas and estimates for parabolic Schr\"odinger  operators}
\author{Xue-Mei Li}
\date{}
\institute{Mathematics Institute, The University of Warwick, Coventry CV4, 7AL}
\maketitle

\begin{abstract}
We study the Hessian of the fundamental solution to the parabolic problem for weighted Schr\"odinger operators of the form $\half \Delta+\nabla h-V$  proving a second order Feynman-Kac formula and obtaining Hessian estimates. 
For manifolds with a pole, we use the Jacobian determinant of the exponential map to offset the volume growth of the Riemannian measure and use the semi-classical bridge as a delta measure at $y_0$ to obtain exact Gaussian estimates.

These estimates are in terms of bounds on $\Ric-2\Hess h$, on the curvature operator,
and on the cyclic sum of the gradient of the Ricci tensor.

\end{abstract}

\footnote{AMS Mathematics Subject Classification : 60Gxx, 60Hxx, 58J65, 58J70} 

\tableofcontents

\section{Introduction}
Let $M$ be a complete smooth Riemannian manifold of dimension $n$ greater than $1$, $\Delta$ the Laplace-Beltrami operator on $M$,
 $\Delta^h=\Delta+2L_{\nabla h}$ the Bismut-Witten (or weighted) Laplacian, where $h$ is a real valued smooth function on $M$,  $\nabla$ denotes the gradient operator and also stands for the Levi-Civita connection. The manifold is also assumed to be connected.  For a real valued
 bounded and H\"older continuous function $V$ on $M$, 
 let $P_t^{h,V}f$ denotes the  solution to the equation \begin{equation}\label{schroeq}
\frac{\partial }{\partial t}f_t = \left(\frac{1}{2}\Delta^h- V\right) f_t,  \; \hbox{ for } t>0;  \qquad
\lim_{t\downarrow 0} f_t=f.
\end{equation}
where $V$ is a real-valued bounded and locally H\"{o}lder continuous function on $M$.  We denote by $p_t^{h,V}$ the fundamental solution
of (\ref{schroeq}). If $h=0$ or/and $V=0$ the corresponding subscripts will be  dropped.  
There is a vast literature on parabolic Schr\"odinger operators on a complete Riemannian manifold, see 
the books \cite{Cycon-Froese-Kirsh-Simon, Davies-Safanov, Simon-82} and the reference therein, and \cite{ Molchanov, Azencott, Donnelly, Elworthy-Truman-81, Aizenman-Simon, Li-Yau}. 
We study the Hessian of the solutions and will pay particular attention to the second order derivative of $ \log p^{h}_t$, the latter plays a role in the $L^2$ analysis of the space of continuous paths on a Riemannian manifold, which we elaborate later, which was also studied in connection with parabolic Harnack inequalities. 

By an $h$-Brownian motion $(x_t, t\ge 0)$ we mean a Markov process with generator $\f 12 \Delta^h$. An $h$-Brownian bridge $(z_s^t, s\ge 0)$ is the $h$-Brownian motion conditioned to reach a point $y_0$ at a given time $t$. The former gives a probabilisitic representation for $P_t^h$ and for the Feynman-Kac semigroup via Feynman-Kac's path integration formula,
\begin{equation}
P_t^{h,V} f(x_0)=\E\left[ f(x_t)e^{-\int_0^t V(x_s)ds}\right];
\end{equation}
 while the latter plays the role of the $\delta$-measure at $y_0$ leading to the following formula for the Feynman-Kac kernel
\begin{equation}\label{fk-bridge}
p_t^{h,V}(x_0,y_0)= p_t(x_0, y_0)\E \left[e^{-\int_0^t V(z^t_s) ds}\right].
\end{equation}

Feynman-Kac formula / Feynman-path integral is an effective tool in a range of studies. For their use in connection with stochastic processes see the book \cite{Freidlin-book}, and more recently  \cite{Sturm-93,Albeverio-Mazzucchi,Gueneysu, Albverio-Kawabi-Rockner, Li-Thompson, Rincon}.  

 The results in this article are obtained under conditions on the bilinear form $\Ric-2\Hess h$ where $\Ric$ denotes the Ricci curvature and $\Hess h$ denotes the second derivative of $h$  on the curvature operator $\Rc$, and on a curvature $\Theta^h$, the cyclic sum of 
$\nabla \Ric^\sharp$.  
 For $v_1, v_2, v_3 \in T_{x_0}M$,  \begin{equation}\label{Theta}
\begin{aligned}
\left\< \Theta(v_2) v_1, v_3\right\>&=  \left( \nabla_{v_3}  {\Ric}^{\sharp}\right) (v_1, v_2) -  \left( \nabla_{v_1}  {\Ric}^{\sharp}\right) (v_3, v_2)
 -   \left( \nabla_{v_2}  {\Ric}^{\sharp}\right) (v_1, v_3),\\
 \Theta^h(v_2, v_1)&=\f 12 
\Theta(v_2)( v_1)+  \nabla^2 (\nabla h)( v_2,v_1)
+ \Rc( \nabla h, v_2)(v_1).
 \end{aligned}
\end{equation}
If $M$ is a symmetric space the gradient of the curvature vanishes, so $\Theta$ describes a  symmetry property of
 the manifold.

We will first obtain a second order Feynman-Kac formula, i.e. a formula for the Hessians, $\Hess P_t^{h,V}f$ and  $\Hess  p_t^{h,V}$,
in which \underline{neither} the initial function $f$ \underline {nor}  the potential function is differentiated, and then use them for estimates.
For this we first introduce a {\it doubly damped} stochastic parallel translation equation, a stochastic equation driven by $\f 12 \Ric^\sharp -\nabla^2 h$, $\Rc$  and $\Theta^h$,  and use it
to develop a probabilistic representation for the Hessian of the solution without potential.
Such type of formulas were previously proved in  \cite{Elworthy-Li} using the solutions of a gradient stochastic differential equation (SDE) together with its linearised and the twice linearised equations; a local Hessian formula was presented in \cite{ArnaudonPlankThalmaier} where $V=0$;  also a formula for $\Delta P_tf$ was given \cite{Elworthy-Li-form} 
not involving the derivative of the Ricci curvature. 
Differentiation formulas of all orders for heat kernels was obtained in  \cite{Norris-93},
generalising  a formula in \cite{Bismut} for $\nabla \log p_t$ in terms of the Brownian bridge, see also
\cite{Driver-Thalmaier}.

The stochastic damped parallel translation process $(W_t)$ is given by an ODE driven by the zero order part of the wighted Laplacian on differential 1-forms, $\Delta^h=-\nabla^*\nabla -(\f 12 \Ric^\sharp -\nabla^2 h)$. They are also the local conditional expectations of the derivative flows,  with respect to the filtration of $(x_t)$, the latter are
solutions of the linearised gradient SDE. Thus $dP_tf(v)=\E df(W_t(v))$. Using the solutions $W_t^{(2)}$ to the
doubly damped stochastic parallel translation equation, we prove that
$$\nabla df(v_1, v_2)=\E \nabla df(W_t(v_2), W_t(v_1))+\E df(W_t^{(2)}(v_2, v_1)),$$
holds under conditions \underline{\bf C1(a)}, \underline{\bf C3}, and \underline{\bf C5(g)} in \S\ref{section-main-results}.
The Hessian of $P_t^h$ does not satisfy the commutative relations 
 enjoyed by $d$ and  $\Delta^h$,
  $de^{\f 12 \Delta^h} f=e^{\f 12 \Delta^h} (df)$ and $\Delta^h e^{\f 12 \Delta^h} f=e^{\f 12 \Delta^h} (\Delta^h f)$,
  explaining the involvement of $\Theta^h$ in the doubly damped stochastic parallel translation whose solutions 
  are in fact the local conditional expectations of the spatial derivatives of~$W_t$. 

To avoid differentiating the potential term, we use the variation of constant formula and has the following formal expression:
$$\Hess P_t^{h,V}f(v_1, v_2) =\Hess P_t^h f(v_1, v_2)+\int_0^t \Hess \left( P^h_{t-r}(V P_r^{h,V} f)\right) (v_1, v_2) dr.$$
Every derivative shifted from the initial value $f$ to the heat kernel, causes a loss of integrability in time by the order of  $\f 1 {\sqrt t}$.
 In particular  $|\Hess P_{t-r}^h|$ is of order  $\f 1{t-r}$ and the integral given above has a formal singularity at $r=t$.  See
 Theorem \ref{second-order} which is proved under condition \underline{\bf C1}.
 Another type of singularity will present itself when we work on the Feynman-Kac kernel $p_t^{h,V}$.

 To obtain information on the fundamental solution $p_t^{h,V}$, one might wish to use the Brownian bridge. 
 This does not work very well. For example, the usual formula for $\nabla p_t$ using the Brownian
 bridge is circular as the Brownian bridge is a Markov process with Markov generator 
 $\f 1 2 \Delta+\nabla \log p_{T-t}(\cdot, y_0)$ where $T$ being the terminal time and $y_0$ the terminal value.
The extension of the operator to differential 1-forms is $\f 12 \Delta+\nabla^2 \log p_{T-t}(\cdot, y_0)$.
However it is possible to obtain information on the Hessian of the Schr\"odinger 
kernel  from that on $ p_t^h$. Although our aim is to obtain precise estimates for the weighted schr\"odinger kernel
 on manifolds with a pole, we did obtain Hessian estimates for the $P_t^{h,V}f$ and on $\Hess p_t^h$ on
   general complete  Riemannian manifolds,  generalising those in
\cite{Sheu,Norris-93,HsuEstimates,Aida,Stroock2000,Stroock-Turetsky-98, XDLi-Hamilton}.
 These are proved  under conditions \underline {\bf C2} and \underline {\bf C1(c)} from \S\ref{section-main-results}.

 Our main results are for manifolds with a pole, for which we use the semi-classical bridge introduced by Elworthy and Truman \cite{Elworthy-Truman-81}  to gauge the estimates. The semi-classical bridge before the terminal time can be considered as an approximation to the delta measure. 
Our method will depart slightly from  that in \cite{Elworthy-Truman-81}, taking advantage of the equivalence of the semi-classical bridge with the $h$-Brownian motion before the terminal time and navigating  the singularity at the terminal time, this latter requires careful analysis. For the second order derivative of $P_t^{h}$ the singularity problems becomes pronounced.
  We are also able to avoid to differentiate the
damped stochastic parallel translations along the semi-classical bridge. Instead we use the damped stochastic parallel translation equations for the $h$ Brownian motion,
and run them along semi-classical bridge paths.


 A point $y_0$ is a pole for $M$ if the exponential map at $y_0$ is a diffeomorphism from the tangent space $T_{y_0}M$ at $y_0$ to $M$.
If $M$ is a manifold with negative sectional curvature then every point of the manifold is a pole.
For $\R^n$,  $p_t(x,y)=(2\pi t)^{-\f n2} e^{-\f{|x-y|^2}{2t}}$, $ \nabla_\cdot \log p_t(\cdot, y_0)=-\f {d\nabla d(x_0, y)} {t}$,
and for $I$  the $n\times n$ identity matrix, $\f{\nabla d p_t}{p_t}=-\f 1t +\f {d^2(x_0, y)} {t^2}$.
The simplest heat kernel for the hyperbolic space is in dimension $3$, for which
 $$p_t(x,y_0)= (2\pi t)^{-\f 32} e^{-\f t 2 }e^{-\f {d ^2(x,y_0)}{2t}}J_{y_0}^{-\f 12 }(x),   \quad J_{y_0}(x)=(\f {\sinh d(x,y_0)}{d(x, y_0)})^2.$$
 But computations for its derivatives are already messy.
 For a generic manifolds, we  expect an estimate of the order $\f {d(x_0, y)} {t}+\f 1 {\sqrt t}$ for the gradient and 
 $(\f {d(x_0, y)} {t}+\f 1{ \sqrt t})^2$ for the Hessian. 
 Observe that for a Brownian motion, $\f {d(x_t, y_0)}t\sim \f 1 {\sqrt t}$
 and so the above estimates are in effect of order $\f 1 {\sqrt t}$ and $\f 1 t$ respectively.  
 However for the Hessian, the Euclidean exact formula has a lot of advantage.

If $y_0$ is a pole, the heat kernel and its derivatives are expected to be also
 Gaussian like, but we will need to discount the growth of the volume at a point away from $y_0$. It is reasonable to base the study on an approximate  ansatz :
 $$k_t(\cdot, y_0)=(2\pi t)^{-\f n 2}e^{-\f {d^2(\cdot, y_0)}{2t}}J_{y_0}^{-\f 12},$$
 where   $J_{y_0}$ is the Jacobian determinant of the exponential map at $y_0$, also called the Ruse invariant \cite{Walker, Elworthy-book}.
 For the hyperbolic space, $J = \left(\frac{1}{d}\sinh {d}\right)^{n-1}$. 
Our ansatz for the Hessian of $p_t$ are  of the same form,
\begin{equation}\label{Hessian-idea}
\nabla dp_T^{h,V}= e^{h(y_0)-h(x_0)}k_t(x_0,y_0) A_T.
\end{equation}

The correction term $A_T$ will be given by the semi-classical bridge $\tilde x_t$, introduced in \cite{Elworthy-Truman-81}, with terminal time $T$ and terminal value $y_0$. It  is a strong Markov process with time dependent Markov generator  $\f 12 \Delta+ \nabla \log k_{T-t}(\cdot, y_0)$ where $0<t\le T$.
The excellent property it enjoys is that its radial part is the $n$-dimensional Bessel bridge.
The estimates will crucially depend on that of the Bessel bridge.

  Upper and lower bounds of the following form 
$ C \f 1 {V(x,{\sqrt t})} e^{-\f {d^2(x,y)} {Ct}}$ where $V(x, {\sqrt t})$ is the volume of the geodesic ball at $x$ of radius $\sqrt t$,
was first proven for manifolds of bounded geometry, see
\cite{Donnelly} and also  \cite{Cheeger-Gromov-Taylor}. It is remarkable that this extends
to manifolds whose curvature is bounded between two constants and to manifolds of non-negative curvature  \cite{CLY, Li-Yau}.
For manifold with a pole the upper bound follow readily from the
 `elementary formula' of Elworthy and Truman, for $\Phi=\frac{1}{2}J_{y_0}^{\frac{1}{2}}\Delta J_{y_0}^{-\frac{1}{2}}$,
\begin{equation}
p_T^V(x_0,y_0)=k_T(x_0, y_0)\E \left(e^{\int_0^T (\Phi-V)(\tilde x_s) ds}\right).
\end{equation}
noted that for the hyperbolic space, $\Phi = -\frac{(n-1)^2}{8}+\frac{(n-1)(n-3)}{8}\left(d^{-2}- \sinh^{-2}({d})\right)$.

Unlike the standard representation by the Brownian bridge, (\ref{fk-bridge}), this representation becomes amenable for the heat kernel estimates as soon as the Riemannian metric on $M$ is not trivial. The elementary formula was generalised in \cite {Watling1} and used by M. Ndumu \cite{Ndumu91} to obtain  very nice heat kernel estimates.   Using elementary formulas for the study of the heat kernel were pioneered by M. Ndumu \cite{Ndumu-09} and S. Aida \cite{Aida-Gaussian-estimates}.  In \cite{Aida-Gaussian-estimates}
the estimates are obtained under assumptions on the derivatives of Ruse invariant and on the derivatives of the distance function to the pole, also the gradient of the Ricci curvature are assumed to be bounded. 
Very recently 
the semi-classical measure on the pinned path space is studied in \cite{Li-ibp-sc} and gradient estimates for the Schr\"odinger operator $\half \Delta^h-V$ are obtained in \cite{Li-Thompson}, where the Ruse invariant is not differentiated.

 The ansatz (\ref{Hessian-idea}), which we will deduce under condition \underline{\bf C4},
  would lead easily to an exact Gaussian bound for the Hessian of $p_t$, and we will prove the correction term 
 $A_t$ has the appropriate short time asymptotics.


  We outline the paper.
In \S\ref{preliminary} we introduce the notation, discuss the intuitive ideas, motivations and related work.
The main results and {\it geometric conditions} will be presented in \S \ref{section-main-results}. In \S \ref{section-gradient}  moment estimates and the strong 1-completeness for gradient SDEs are obtained. In \S \ref{section-doubly-damped} we prove the primitive Hessian formula, define and study
   the doubly damped stochastic parallel translation equation estimating the norm of its solutions and proving their exponential integrability, and estimate the relevant terms in the Hessian formulas given in \S \ref{section-Hessian}. The immediate consequences of the Hessian formulas
   are the basic Hessian estimates for general manifolds
    \S\ref{section-Hessian-estimates}.
  In \S\ref{section-pole}
  we justify the `exact Gaussian formulas' for the Hessians in terms of the semi-classical bridge, leading to 
  exact Gaussian estimates,  \S\ref{section-exact-Gaussian}, for $\nabla^2 p_t$.

 \section{Preliminaries}\label{preliminary} 
We begin with explaining the intrinsic ideas behind the damped stochastic  parallel translation,
with heat equations on differential one forms and  the derivative flow of a gradient SDE.
We then discuss estimates on the moments of the derivative flows and the continuous dependence of the solution flow of an SDE on its initial value.  Finally we remarks on Hessian estimates for the logarithms of the heat kernels.

 Throughout this paper $\{ \Omega,\mathcal{F},\mathcal{F}_t,\mathbb{P}\rbrace$ denotes the underlying probability space with filtration $\F_t$ satisfying the usual hypothesis.
Let $\lbrace B_t^i \rbrace$ be a family of independent one-dimensional Brownian motions and we write $B_t=(B_t^1, \dots, B_t^m)$. The  notation $\circ dB_s$ indicates  Stratonovich integration. 

  \subsection{Heat equation for differential 1- form and damped parallel translation}
Let $\Ric_x: T_xM\times T_xM\to \R$ denote the Ricci curvature, 
 ${\Ric_x}^{\sharp}: T_xM\to T_xM$  be the linear map given by $\< {\Ric}^\sharp_x u,v\> = {\Ric_x}(u,v)$. Similarly $(\nabla^2 h)^{\sharp}=\nabla_\cdot \nabla h$.
  We also denote by ${\Ric_x}^{\sharp}$ the corresponding operator on differential 1-forms. 
Let $\delta^h$ denote the $L^2$ adjoint of the differential $d$ with respect to the weighted volume measure
 $e^{2h}dx$, then $\Delta^h=-(d+\delta^h)^2$. Furthermore $d+\delta^h$ is essentially self-adjoint on the space of smooth functions  (and on smooth differential forms) with compact supports, on which $d$ and $\Delta^h$ commute \cite{Li-thesis}. In particular 
 $dP_t^{h}f$ solves the following equation on differential 1-forms with initial condition the exact differential 1-form $df$,
 $$\frac{\partial }{\partial t}\phi_t = \frac{1}{2}(\nabla)^{h,*} \nabla \phi_t +\left( -\f 12 {\Ric}^{\sharp}+ (\nabla^2 h)^{\sharp}\right )\phi_t,$$
 where $(\nabla)^{h,*}$ is the adjoint of $\nabla$ with respect to $e^{2h}dx$.

 It is therefore not surprising that probability representations for  $dP_t^hf$ involves  the solution to the following damped stochastic parallel equation\begin{equation}\label{Wt-pre}
\f {DW_t} {dt}=\left( -\f 12{\Ric}^\sharp _{x_t}+ (\nabla^2 h)^{\sharp} \right) (W_t),
\end{equation}
Its solution with initial value $v_0$ will be denoted by $W_t(v_0)$. It is a stochastic process of finite variation.  Here the equation is defined path by path, for each $\omega$,
$\f {DW_t(\omega)} {dt}=\parals_t\f {d}{dt} \parals_t^{-1} $
and $\parals_t\equiv \parals_t (x_\cdot(\omega))$ is the stochastic parallel translation along the
the $h$-Brownian paths.
The stochastic process $W_t$ takes values in $\LL(T_{x_0}M;T_{x_t}M)$.

The following probabilistic representation, $$e^{\f 12 t \Delta}df(v)=\E df(W_t(v)),$$  was given in H. Airault \cite{Airault}, initially for $h=0$ and for compact manifolds.  The formula holds in fact for Brownian motion with a general drift.


\subsection{Linearised SDE, the derivative flow, and  strong 1-completeness }

For $r \ge 2$, let  $\{X_i\}_{i=1}^m$ be $C^{r+1}$ vector fields, $X_0$ a $C^r$ vector field. Let $(F_t(x), t<\xi(x))$ denote the maximal solution to the SDE  \begin{equs}
\label{sde}
dx_t=\sum_{i=1}^m X_i(x_t)\circ dB_t^i +X_0(x_t)dt=X(x_t)\circ dB_t+X_0(x_t)dt.
\end{equs}
 with initial value $x$. For a fixed point $x_0\in M$ we write $x_t=F_t(x_0)$ for simplicity. 
 The SDE  is said to be {\it complete or conservative} if for every $x$,
$\xi(x)=\infty$ almost surely, which does not necessarily mean that  for a common set $\omega$ of measure zero, 
$F_t(x)$ has infinite life time for every $x$.  The completeness property is determined by the generator of the SDE.
Assume the SDE is complete. Another useful commutation relation, should it hold, is
$$dP_t f(v)=\E df(TF_t(v)),$$
where $TF_t(v)$ is the {\it derivative flows} of (\ref{sde}), it is solution to
 the linearised SDE:
$$Dv_t=\sum_{i=1}^m \nabla X_i(v_t)\circ dB_t^i + \nabla X_0(v_t) dt.$$
Let  $\F_t^{x_0}=\sigma\{x_s: s\le t\}$ and $v\in T_{x_0}M$. Then $W_t(v)$ is the local conditional expectation of $TF_t(v)$, and if the latter is integrable, $$W_t(v)=\E\{T_{x_0}F_t(v)|\F_t^{x_0}\}.$$

We say an SDE is {\it strongly complete} or it has a {\it global smooth solution flow} if $(t,x)\mapsto F_t(x, \omega)$ is continuous on $\R_+\times M$ almost surely.
For one dimensional manifolds, the two concepts are equivalent. 
It is common to believe that well posedness implies strong completeness,  or the first exit time of the solution from a set with smooth boundary is  continuous with respect to the initial value. See e.g. \cite{Lack-flow} for discussions on counter examples. 
Consider $\dot z=z^2$ on the complex plane, a solution of which
has finite life time if and only if its initial value is a non-zero real number. Indeed for $\delta \not =0$ and $x_0\not =0$, consider $z(t)=\f {x_0(1+i\delta)}{1-x_0(1+i\delta)t}$. 
Its norm is bounded in time and so for sufficiently
large $a>0$, its first exit time from the ball of radius $a$ is infinity, unless $\delta =0$; while the solution from $x_0$ will exit the ball in finite time.
For an SDE, even if the solutions from any given point does not explode, the solution must restart from  random initial
points for which  the uncountability of the exceptional zero measure sets must be taken into consideration.
 If the system is strongly complete then for every $t$, $x\mapsto F_t(x, \omega)$ is $C^r$. 
 It is sufficient to use a weaker notion of strong completeness.
\begin{definition}\cite{Li-flow}
\label{def-1-complete}
The SDE is strongly 1-complete if  for any smooth curves $\sigma: [0,1]\to M$, $F_t(\sigma(s), \omega)$ is continuous in $(s,t)$ almost surely for all $t\in [0,T]$ and $s\in [0,1]$ for some $T>0$.  
\end{definition}
This is a weaker concept than the existence of a global smooth solution flow.
Between strong 1-completeness and strong completeness there is a range of notions: 
the strong $p$-completeness for $p=2, \dots, n-1 $. Roughly speaking the flow takes a $p$-dimensional sub-manifold  to a $p$-dimensional sub-manifold. If $p=n-1$, this is equivalent to strong completeness. 
If $M=\R^n$, $X_i$  are globally Lipschitz continuous and the stochastic integral  are in It\^o form,  then the strong completeness follows from the fixed point theorem. For compact manifolds, the complete SDE can be lifted to a complete  SDE on the group of diffeomorphism over $M$ and the strong completeness follows.
 Because of the difficulty in localising, we introduce the derivative flow method to obtain strong completeness for non-compact manifolds and for SDEs on $\R^n$ without Lipschitz conditions.

  We illustrate the use of this in the lemma below is taken from \cite[Cor 9.2]{Li-flow}, we make a slight improvement on this,  replacing the continuity of
 $|T_{\sigma(s)}F_t|$ in $L^1$ there by a more intrinsic condition.  
 \begin{lemma} \label{exchange-lemma}
Suppose that (\ref{sde})  is strongly 1-complete with $\E|TF_t|$ finite. Suppose that 
  $s\mapsto|W_t(\dot\sigma(s)) |$ is continuous in $L^1$.  If $f$ is $BC^1$ then 
for $v\in T_{x_0}M$ and $x_0 \in M$, $d(P_t^hf)(v)=\E [df(W_t(v))]$.
\end{lemma}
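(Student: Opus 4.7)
The plan is to compute $d(P_t^h f)(v)$ as a one-sided derivative along a smooth curve $\sigma\colon[0,1]\to M$ with $\sigma(0)=x_0$ and $\dot\sigma(0)=v$, first establishing the identity with the derivative flow $TF_t$ and then passing to the damped parallel translation $W_t$ by conditioning. The strong 1-completeness hypothesis is introduced precisely so that the chain rule along such a curve is legitimate.

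First I would invoke strong 1-completeness: for a.e.\ $\omega$, the map $s\mapsto F_t(\sigma(s),\omega)$ is continuous on $[0,1]$, hence its image lies in a compact set $K(\omega)\subset M$. On any chart covering $K(\omega)$ the SDE has a smooth local flow, so $s\mapsto F_t(\sigma(s),\omega)$ is in fact $C^1$ with derivative $TF_t(\dot\sigma(s),\omega)$. Applying the chain rule to $f\circ F_t\circ\sigma$ yields, for a.e.\ $\omega$,
$$f(F_t(\sigma(s),\omega))-f(F_t(x_0,\omega))=\int_0^s df_{F_t(\sigma(r),\omega)}\bigl(TF_t(\dot\sigma(r),\omega)\bigr)\,dr.$$
Since $f\in BC^1$, the integrand is dominated by $\|df\|_\infty |TF_t(\dot\sigma(r))|$, and the assumption $\E|TF_t|<\infty$ combined with smoothness of $\sigma$ on $[0,1]$ justifies Fubini, giving
$$P_t^h f(\sigma(s))-P_t^h f(x_0)=\int_0^s \E\bigl[df_{F_t(\sigma(r))}\bigl(TF_t(\dot\sigma(r))\bigr)\bigr]\,dr.$$

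Next I would replace $TF_t$ by $W_t$ inside the expectation. For each $r$, $df_{F_t(\sigma(r))}$ is measurable with respect to the filtration generated by $F_\cdot(\sigma(r))$, and $W_t(\dot\sigma(r))$ is by definition the conditional expectation of $TF_t(\dot\sigma(r))$ given this filtration; hence the integrand is unchanged under the substitution. The $L^1$-continuity of $s\mapsto |W_t(\dot\sigma(s))|$, together with the continuity of $s\mapsto F_t(\sigma(s))$ and boundedness of $df$, shows that $r\mapsto \E[df_{F_t(\sigma(r))}(W_t(\dot\sigma(r)))]$ is continuous. The fundamental theorem of calculus, applied at $s=0$, then produces $d(P_t^h f)(v)=\E[df_{x_t}(W_t(v))]$, as required.

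The hard part is the very first step: showing that for a.e.\ $\omega$ the map $s\mapsto F_t(\sigma(s),\omega)$ is genuinely $C^1$ in $s$ (not merely continuous) with derivative $TF_t(\dot\sigma(s),\omega)$. This is where strong 1-completeness does its essential work, because it rules out the uncountable-exceptional-sets pathology described after Definition \ref{def-1-complete} and lets one patch together the local smooth flows along the precompact image of $\sigma$. The integrability refinement over \cite{Li-flow} comes from replacing a uniform $L^1$-estimate on $|TF_t|$ along $\sigma$ by the more intrinsic $L^1$-continuity of $|W_t(\dot\sigma(\cdot))|$, which is exactly what is needed in the final continuity argument and nothing more.
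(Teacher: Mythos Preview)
Your argument is correct and follows essentially the same route as the paper's proof: differentiate along a curve using strong 1-completeness to justify the chain rule with $TF_t$, average and condition to replace $TF_t$ by $W_t$, then use the $L^1$-continuity hypothesis together with the pathwise continuity to pass to the limit at $s=0$. The paper phrases the last step as $\lim_{s\to 0}\frac{1}{s}\int_0^s \E[df(W_t(\dot\sigma(r)))]\,dr$ rather than invoking the fundamental theorem of calculus, but this is only a cosmetic difference.
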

\begin{proof}
Take a normal geodesic curve $\sigma\colon  [0,\ell]\to M$  with initial value $x_0$ and initial velocity $v$. 
By the strong 1-completeness, for almost all $\omega$, $F_t(\sigma(s))$ is  differentiable with respect to $s$ and
$d(P_tf)(v)=\lim_{s\to 0}  \E\left(\f 1 s \int_0^s df
\left(T_{\sigma(r)}F_t\left(\dot \sigma(r),\omega\right) \right)dr\right)$.
If  $T_{\sigma(r)}F_t(\dot \sigma(r))$ is integrable, then $\E \{TF_t(\dot \sigma(r))|\F_t^{\sigma(r)}\}$ is $W_t(\dot \sigma(r))$,
from which we see that $d(P_tf)(v)=
\lim_{s\to 0} \f 1 s \int_0^s  \E[df (W_t(\dot \sigma(r))]dr$.
By the strong 1-completeness $W_t(\dot \sigma(r))$ is continuous in $r$.
If it is continuous in $L^1$ and $f \in BC^1$ we may take $r\to 0$ to obtain  $\E df(W_t)$
on the right hand side. 
\end{proof}
\begin{remark}
If $\sup_{s\in [0, \ell]} \E|W_t(\dot \sigma(s))|^{1+\delta}$ is finite for some $\delta>0$, then $s\mapsto|W_t(\dot\sigma(s))|$ is continuous in $L^1 $,
 which holds in particular under one of the conditions from \underline{\bf C5(a)} to \underline{\bf C5(d)}. See Lemma \ref{lemma:strong-1}.
\end{remark}

 The derivative flow describes the evolution of
the distance, $d(F_t(x), F_t(y))$, between two solutions with initial values $x$ and $y$ respectively. 
We denote by $v_t:=TF_t(v_0)$  its  solution with initial value $v_0\in T_{x_0}M$.
Then, c.f \cite{Li-moment},
 \begin{equation}
\begin{array}{ll}
|v_t|^p=|v_0|^p +p\sum_{i=1}^m\int_0^t|v_s|^{p-2} \<\nabla_{v_s} X^i, v_s\> dB_s^i
+\f p2 \int_0^{t}|v_s|^{p-2}H_p(x_s)(v_s, v_s)ds,
\end{array}
\label{eq: vt}
\end{equation}
where for $v\in T_xM$,  
\begin{equation}\label{hp}
 \begin{array}{ll}
&H_p(x)(v,v)= \sum_{i=1}^m \<\nabla^2 X^i(X^i,v), v\>
+\sum_1^m \left\<\nabla_{\nabla_v X^i} X^i,v\right\> \\
&\qquad\qquad +\sum_1^m |\nabla X^i(v)|^2
+(p-2)\f 1 {|v|^2}\sum_1^m \<\nabla _vX^i, v\>^2+
2\<\nabla _vX_0, v\>.
\end{array}
\end{equation}
\normalsize
set \begin{equation}
\label{h1}
\overline H_p(x)=\sup_{|v|=1, v\in T_xM}H_p(v,v)
\end{equation}
If  $\overline H_1$ is bounded above, then $\sup_{x\in D} \E \sup_{s\le t} |T_xF_s|$ is finite for every compact set $D$ and the SDE is strongly 1-complete. 
If for a number $p \ge 1$ and a function $f:M\to \R$ satisfying that
\begin{equation}
\label{criterion}
\sup_{x\in D}\E\exp\left({6p^2 \int_0^t f(F_s(x))\chi_{s<\xi(x)}ds}\right)<\infty,
\end{equation}
the following two conditions hold, (a) $\sum_{i=1}^m|\nabla X_i|^2\le f$,  (b) $\overline H_p\le 6f$.   Then
 $\sup_{x\in D} \E \sup_{s\le t} |T_xF_s|^p$ is finite for every compact set $K$ and the SDE is strongly 1-complete. 
For a weaker criterion  see \cite[Thm. 3.1 \& 5.3]{Li-flow}. For $\sup_{x\in D} \E |T_xF_t|^p$ to be finite, we may use localising techniques to remove condition (a). See also \cite{Chen-Li-flow}.

\subsection{Hessian Estimates}  Finally we comment on $ \log p_t$.
The gradient and the Hessian of $p_t$ have been considered in the study of parabolic Harnack inequalities, 
see \cite{Li-Yau,Hamilton,Aizenman-Simon}, also the book \cite{Jost}. For bounded strict elliptic second order differential operators on $\R^n$, they are obtained in \cite{Sheu}. For compact manifolds the relevant small time gradient  estimate was given and used  in \cite{Driver92}.
The remarkable Hessian estimates $|\nabla^2 \log p_t| \le C(\f 1 {\sqrt t}+\f d t)^2$ and indeed `off cut-locus' estimates for derivatives of all order are given in \cite{Malliavin-Stroock}
with generalisation to \cite{HsuEstimates,Aida,Stroock2000,Stroock-Turetsky-98}, and more recently,  \cite{XDLi-Hamilton},
see also \cite{Engoulatov,Coulhon-Zhang}.

These estimates are naturally relevant 
 in the study of loop spaces (continuous loops), where the Brownian bridge measure
 is used as a reference measure playing the role of the Lebesque measure on $\R^n$.
   The basic questions concerning the Brownian bridge measures
 include the tail behaviour of the measure, Poincare and Logarithmic Sobolev inequalities (LSI), which we discuss further later.
 One also wish to obtain a Hodge decomposition theorem of the space of  $L^2$ spaces of differential forms and to seek a Hodge deRham theorem linking the topology of the infinite dimensional pinned path space with the cohomology defined by the Hodge decomposition,
  \cite{Shigekawa,Fang-Franchi-97,Elworthy-Li-icm, Elworthy-Li07,Aida}.

If $M=\R^n$, the Brownian bridge measure is Gaussian for which  L. Gross \cite{Gross-91} proved his celebrated logarithmic Sobolev inequality,
this extends to pinned path spaces over Lie groups,  \cite{Airault-Malliavin,Driver-Lohrenz,Fang-99}. Although there are progress, for example see   \cite{Gong-Rockner-Wu, Arnaudon-Simon, Gong-Ma},   a counter example of a manifold for which Poincar\'e inequality does not hold on its loop space was  given in \cite{Eberle}. Further positive results were established in  \cite{Chen-Li-Wu,Aida},  the first is for the hyperbolic space and the latter was for asymptotically Euclidean manifolds.  It is likely that the validity of the Poincar\'e inequality for the natural metric induced by
stochastic parallel translation measures how far the Brownian bridge deviates from a Gaussian measure. In particular we have the conjecture.

{\bf Conjecture. } Poincar\'e inequality on the loop space, with reference measure  the Brownian bridge measure, does not hold on the spheres.

%

 \section{Main results}\label{section-main-results}
 Let $(x_t)$ be an $h$-Brownian with initial value $x_0$, $\parals_t\equiv \parals_t(x_\cdot):T_{x_0}M\to T_{x_t}M$ the stochastic parallel translation, and $W_t: T_{x_0}M\to T_{x_t}M$ the damped stochastic parallel translation so that  for almost every $\omega$,
$\parals_t^{-1}W_t(\omega)$ is a matrix valued stochastic process solving a linear ODE given by $-\f 12 \parals_t^{-1}Ric^{\sharp} \parals_t
+\parals_t^{-1} \nabla^2 h({\parals_t\cdot} )$.
 
 Let $v_1, v_2 \in T_{x_0}M$, fix $W_t(v_1)$, $W_t(v_2)$.
We introduce the doubly damped stochastic parallel translation equation,  along the $h$-Brownian paths:
\begin{equation}\label{doubly-damped}
\begin{aligned}
 {D} W_t^{(2)}(v_1, v_2)
=&\left(- \f 12 {\Ric}^{\sharp}_{x_t}   +\nabla_\cdot \nabla h\right)  \left(W_t^{(2)} (v_1, v_2)\right)dt +\f 12 
\Theta^h(W_t(v_2))( W_t(v_1)) dt\\
&+\Rc( d\{x_t\}, W_t(v_2)  )W_t(v_1),\\
W_0^{(2)}(v_1, v_2)=&v_2, \qquad  v_1, v_2\in T_{x_0}M,
\end{aligned}
\end{equation}
where $d\{x_t\}$ denotes integration with respect to the martingale part of  $(x_t)$, see Definition \ref{def-martingale-part}.
\begin{definition}
The  solution to the doubly damped covariant stochastic integral equation, denoted by $W_t^{(2)}(v_1, v_2)$,
 is called the doubly damped stochastic parallel translation.\end{definition}
Unlike the damped stochastic parallel translation, which is  a process of finite variation and whose norm has a pointwise upper bound
 $\|W_t\|^2\le e^{-\int_0^t \rho^h(x_s) ds}$,
 the doubly damped one has a non-trivial martingale part. 
 The estimates on $\|W_t^{(2)}\|$ are given in \S\ref{section-doubly-damped}.
 Its exponential integrability  requires growth conditions on the curvature $\Rc$ and $\Theta$. These estimates are  also essential
for validating taking the time of the semi-classical Brownian bridge to the terminal time in the forthcoming formula for the hessian of the
Feynman-Kac formula for manifolds with a pole, see e.g.\S\ref{section-estimate-2}, Lemmas  \ref{exponential-estimates} and \ref{estimate-10}.

\subsection{Geometric Conditions} 
\label{section-conditions}
 We state the set of conditions to which we  refer throughout this paper.  Let $\Rc$ denote the curvature and $\Theta$ is the cyclic sum of the covariant derivatives of the Ricci curvature defined by (\ref{Theta}).
  Let $ST_xM$ denotes the unit tangent space at $x$, $v_i\in T_xM$, and let $\{E_i\}$ be an o.n.b. of the tangent space $T_xM$.   Denote by $\|\Rc(\cdot, v_2)v_1\|$ the Hilbert-Schmidt norm of the linear map $\Rc(\cdot, v_2)v_1: T_xM\to T_xM$,
 $\|\Rc(\cdot, v_2)v_1\|:=\left(\sum_{i=1}^n |\Rc(E_i, v_2)(v_1)|^2\right)^{\f 12}$. Set
  \begin{equation}
\label{norms}
\begin{aligned}
\|\Rc_x\|&= \sup_{v_1, v_2\in ST_xM} \|\Rc(\cdot, v_2) v_1\|, \quad \|\Theta^h\|
= \sup_{v_1, v_2\in ST_xM}\left\{ | \Theta^h(v_2)v_1|\right\}.\end{aligned}
\end{equation}
We then fix an isometric immersion of the manifold in an Euclidean space $\R^m$ and denote by
 $\alpha_x$ its second fundamental form. Set
 \begin{equation}\begin{aligned}
H_1(v,v)&=-\Ric(v,v)+2 \Hess(h)(v,v)+|\alpha_x(v,\cdot)|_{\HS}^2- |\alpha_x(v,v)|^2\\
H_2(v,v)&=-\Ric(v,v)+2 \Hess(h)(v,v)+|\alpha_x(v,\cdot)|_{\HS}^2.
\end{aligned}
\end{equation}
 We first observe that $|\alpha_x(v,\cdot)|_{\HS}^2\ge  |\alpha_x(v,v)|^2$
and if  $m=n+1$, then $ H_1=-\Ric+2 \Hess(h)$. Set for $p=1,2$,
$$\overline H_p(x)=\sup_{|v|=1, x\in T_xM} H_p(v,v), \quad \rho^h(x) =\inf_{v\in T_xM, |v|=1} \{\Ric(v,v)-2 \Hess(h)(v,v)\}.$$
We remark that $H_1$ or $ H_2$ does not appear in the Hessian formula nor in the estimates. The controls over $\overline H^p$ leading to $L^p$ boundedness of the derivative flows on any compact time interval and the bounds are
 locally uniform in space.\\

Condition \underline{\bf C1} is used to obtain precise formulas;
 Condition \underline{\bf C2} is used to obtain exponential integrability of the doubly damped process;
 Conditions \underline{\bf C2} and  \underline{\bf C1(c)} are used to obtain Hessian estimates on a general complete Riemannian manifold;
  {\bf  \underline{C4}} is used to obtain estimates on manifolds with a pole.\\
  
Let  $K\in \R$, $c,C,\delta,\delta_1, \delta_2$ are positive constants. Let $\alpha_2$ be a sufficiently small positive number. 
Let $r$ denotes the distance function from a given point.
\begin{enumerate}
\item [{\bf  \underline{C1}.}]
\begin{itemize}
\item [(a)]  $\rho^h\ge -K$; \item[(b)]  $\sup_{s\le t}\E( \|W_s^{(2)}\|^2)<\infty$;
\item[(c)] for all  $f\in BC^2(M;\R)$, $v_1, v_2 \in T_{x_0}M$,
\begin{equation}\label{primitive-formula-intro}
\Hess (P_tf)(v_2, v_1) =\E \left[\nabla df(W_t(v_2), W_t(v_1))\right]+\E \left[df (W_t^{(2)}(v_2, v_1))\right].
\end{equation} 

\end{itemize}
\item  [{\bf  \underline{C2}.}]
$|\rho^h| \le K$,  $\|\Rc_x\|\le \|\Rc\|_{\infty}$,  and $\|\Theta^h\|^2\le c+ \delta r^2$ for $\delta$ sufficiently small.
\item [{\bf  \underline{C3}.}]  $\|\Theta^h\|+\|\Rc\| \le ce^{C r}$ for $C<\alpha_2$.

\item [{\bf  \underline{C4}.}] Let $y_0$ be a pole.  \begin{enumerate}
\item [(a)] $\rho^h\ge -K$,  
$|\nabla \log J|+|\nabla h|\le c e^{\delta_1 r^2}$ where $\delta_1<\alpha_2$, and {\bf  \underline{C3}}.
\item[(b)]
$\Phi^h\le C+\delta_2 r^{2}$ where   
 $\delta_2$ is sufficiently small and $$\Phi^h= - \f 1 2 |\nabla h|^2-\f 1 2\Delta h+\frac{1}{2}J_{y_0}^{\frac{1}{2}}\Delta J_{y_0}^{-\frac{1}{2}}.$$
\item[(c)] \underline {\bf C1(c)}.
\end{enumerate}
\item [ \underline {\bf C5.}]
\begin{enumerate}
\item[(a)]  $\rho^h\ge -C(1+r^2)$ and   $\<\nabla h, \nabla r\> \le c(1+r)$;
\item[(b)]   $\underline\Ric\ge -C(1+r^2)$ and   $\<\nabla h, \nabla r\>\le c(1+r)$;
\item[(c)] There exist Borel measurable functions $f, g: \R_+\to \R_+ $ s.t. 
$$|h|\le f\circ r, \quad \rho^h\ge -g\circ r, \quad (fg)(s) \le c(1+s^2).$$
\item[(d)] $\rho^h\ge - K$.
\item [(e)]$\overline  H_1\le c(1+\ln r)$
\item[(f)]  $\overline H_1\le c(1+ r^2)$.
\item[(g)] $\overline H_2\le c+\delta r^{2}$, $\delta$ is sufficiently small.
  
\end{enumerate}

\end{enumerate}
\begin{remark}
 \begin{enumerate}
 \item [(i)] Conditions \underline{\bf C1(a)} + \underline{\bf C3} imply  \underline{\bf C1(b)} hold ( Lemma \ref{moments-10}).
\item [(ii)] Any one of the conditions from \underline{\bf C5(a)} to \underline{\bf C5(d)} implies that the $h$-Brownian motions do not explode and 
finite moments of all order of its radial process.

If in addition \underline {\bf C5(e)} holds, 
the gradient SDE ( c.f. \S\ref{section-gradient}), with generator $\f 12 \Delta^h$,   is strongly 1-complete. If  \underline {\bf C5(d)}+\underline {\bf C5(f)} hold, the corresponding gradient SDE  is strongly 1-complete
and the square of its radial process  is exponentially integrable for a small exponent. See Theorem \ref{theorem-strong}.

 
If  \underline {\bf C5(g)} holds, the gradient SDE has square integrable derivative flow, with bound locally uniform in time and in space.
 \end{enumerate}
\end{remark}

\noindent
{\bf Proposition \ref{second-diff-formula}.} 
Assume \underline{\bf C1(a)}+\underline{\bf C3}. If furthermore  a gradient SDE  is strongly 1-complete with square integrable derivative flow, locally uniform in time and in space,  then  \underline{\bf C1(c)} holds. In particular 
 \underline{\bf C1(a)} + \underline{\bf C3}+ \underline{\bf C5(g)} $\Longrightarrow$ \underline{\bf C1(c)}.
\begin{remark}
The assumption for  the strong 1-completeness is purely technical, and so are conditions on $\overline H^p$, which validates differentiating the stochastic flow of the gradient SDE with respect to the initial data. 
Gradient SDEs are  defined and studied in \S\ref{section-gradient}.
\end{remark}
Let $V$ be a positive bounded H\"{o}lder continuous function.
Set \begin{equation}\label{NV}
\begin{aligned} N_{t} &=\f  4{t^2}\int_{\f t 2}^t \<d\{x_s\}, W_s(v_1)\>\int_0^{\f t2} \<d\{x_s\}, W_s(v_2)\>, \\
Q_t&=\f 2 t  \int_0^{t/2} \<d\{ x_s\}, W_s^{(2)}(v_1,v_2)\>,\\
\V_{a,t}&=(V(x_a)-V(x_0)) e^{-\int_{a}^t (V(x_s) -V(x_0))ds},  \quad \V_t:=e^{-\int_{0}^t V(x_s) ds}.\end{aligned}
\end{equation}
For a general manifold  we have a Hessian formula in terms of the above terms, leading to estimates on $\nabla^2 \log p_t$.

Below we will present  the version of the second order Feynman-Kac formula on  manifolds with a pole.  Let $(x_t)$ denote the semi-classical bridge $\tilde x_t$ with initial value $x_0$.  We define the processes just $\tilde W_t$, $\tilde W_t^{(2)}$, $\tilde N_t$, $\tilde M_t$, and $ \tilde V_{a,t}$ as in
 (\ref{Wt-pre}) and (\ref{doubly-damped}) and (\ref{NV}) with $x_t$ replaced by $\tilde x_t$.

 \begin{remark}
If $(x_t)$ is a Brownian motion with a drift, solving an SDE, then $W_t$ is the conditional expectation of the solution to the linearised SDE, the derivative flow.  The linearised SDE will involve  the  derivative of the driving vector fields which would have seemed to be, at first glance, 
 the appropriate tangent processes
along the semi-classical bridge.  
 Our $\tilde W_t$ process along $(\tilde x_t)$ will \underline {not} be deduced from  the derivative flow of the corresponding SDE for the semi-classical bridge, in particular we do \underline {not} differentiate $\nabla \log k_t$, nor do we differentiate the distance function.  The same remark applies to 
$\tilde W_t^{(2)}$. \end{remark}

\noindent
 {\bf Theorem \ref{2nd-order-kernel}.} 
Assume \underline{\bf C4.}  Let  $T>0$ and
$ \beta_T^h=e^{\int_0^T \Phi^h(\tilde{x}_s)ds}
$. Then
 \begin{equation}\begin{aligned}
&   {e^{h(x_0)-h(y_0)+V(x_0)\,T} }\f {\nabla d p_T^{h,V}(v_1, v_2)}{k_T(x_0, y_0)}\\
&=  \E\left[\beta_T ( \tilde N_{T}+\tilde Q_T) \right]
+\int_0^T  
\E \left[ \beta_T \tilde \V_{T-r, T}\left( \tilde Q_{t-r}+\tilde N_{T-r}\right) \right] dr. \end{aligned}
\end{equation}

Estimates for the Hessians of $p_t$ follows easily from  Lemmas \ref{lemma8.4}, \ref{lemma8.5}, which states the following hold  under \underline{\bf C4(a)(b)}.
\begin{equation*}\begin{aligned}
\left|\tilde N_T\right |_{L^p(\Omega)}
&\le c(p,n)\left(a_1(\alpha, p,T)+e^{ |K| T}\f {d^{2}(x_0, y_0)}{T^{ 2}} +a_2(K,T)  \f 1 T\right),\\
 \left|\tilde Q_T\right|_{L^p(\Omega)}&\le c(p,n)\left(b_4(2p)\f {d(x_0, y_0) }{T}+ {b_4(2p)} \f 1 {\sqrt T} +{b_3(2)} \f 1{\sqrt{t} } +A\right),
\end{aligned}
\end{equation*}
where the constants are explicitly given in terms of $J_{y_0}$, $\|\Theta^h\|$, $K$ and $\|\Rc\|$, see Definitions \ref{constants-1}
and \ref{constants-2}.


%
%
%
%

\section{Gradient SDEs:  integrability and strong 1-completeness}
\label{section-gradient}

If $i: M\to \R^m$ is an isometric embedding we define $X(x)(e)$ to be the gradient of the function $\<i(x), e\>$ where $e\in \R^m$ and 
$Y(x)$  the adjoint of $X(x): \R^m\to T_xM$. If $\{e_i\}$ is an orthonormal basis of $\R^m$ we define $X_i(x)=X(x)(e_i)$ then the
following is called a gradient Brownian system,
 \begin{equation}
\label{gradient}dx_t=\sum_{i=1}^m X_i(x_t)\circ dB_t^i +\nabla h(x_t)dt,
\end{equation}
In this case,  $\nabla_v X^i=A_x \left(v,Y(x)e_i\right)$, 
$Y(x):\R^m\to \nu_x$ is the orthogonal projection to the normal bundle,  $A_x$ is the shape operator given by the relation $\left\<A_x(v_1,w), v_2\right\>=\left\<\alpha_x(v_1,v_2), w\right\> $
and $\alpha_x: T_xM\times T_xM \to \nu_x$ is the second fundamental form of the embedding. If $|\alpha_x(v,\cdot)|_{HS}^2$ denotes the Hilbert-Schmidt norm of $\alpha_x(v,\cdot)$, i.e. $\sum_{j=1}^m |\alpha_x(v,f_j)|_{\nu_x}^2$ for $\{f_i\}$ an o.n.b. of $T_xM$.
Then
$$\sum_1^m \left \<\nabla _vX^i, v\right\>^2=|\alpha_x(v,v)|_{\nu_x}^2, \quad 
\sum_1^m |\nabla_v X^i|^2=\sum_{j=1}^m |\alpha_x(v,f_j)|_{\nu_x}^2.$$ 
The map $H_p: TM\times TM\to \R$, defined in (\ref{hp}), is given by  geometric quantities:
$$H_p(v,v)=-\Ric(v,v)+2 \Hess(h)(v,v)+|\alpha_x(v,\cdot)|_{HS}^2+(p-2) |\alpha_x(v,v)|^2.$$
 In particular for a co-dimension one sub-manifold of an Euclidean space, $H_1=-\Ric+2 \Hess(h)$.  
 Let  $x_0$ be a point in the manifold,  $C, K$ are constants, and $r$ denotes the distance function from a given point, e.g. $y_0$.
Recall $\overline H_1$ is the supremum of the bilinear form $H_1$, c.f.  (\ref{h1}).  


\begin{lemma}\label{lemma:strong-1}
Assume the dimension of $M$ is greater than $1$.
\begin{enumerate}
\item  Under one of the following conditions,\begin{enumerate}
\item   $\underline \Ric\ge -C(1+r^2)$ and $dr(\nabla h)\le C(1+r)$;
 \item $\rho^h\ge -C(1+r)$, \end{enumerate}
  the $h$-Brownian motion $(x_t)$ is complete and for any $t>0$,
$$ \sup_{s\le t}\E[ d^p(x_s, y_0)]\le  c_1(p)[d^p(x_0, y_0)+t]e^{c_2(p)t}.$$

 If furthermore $\underline H_1\le c(1+\ln r)$, then the gradient Brownian system (\ref{gradient}) is strongly 1-complete.

\item  Suppose that $\rho^h\ge -K $.  There exists a number $\alpha_1>0$ such that for any compact set $D$,
for any $t>0$ and for any $\theta$ satisfying $\theta t< \alpha_1$, $$\sup_{y_0\in D}\sup_{s\le t} \E \left(e^{\theta d^2(x_s, y_0)} \right) <\infty.$$
Also, for any $p>0$, $ \E [\sup_{s\le t} d(x_s, y_0)^p ]\le c_1(p) e^{c_2(p)t} t^{\f p2}$ for some constants $c_1(p), c_2(p)$.
 If furthermore $\overline H_1\le C(1+ d^2(\cdot, y_0))$,  the gradient SDE  (\ref{gradient}) is strongly 1-complete.
\end{enumerate}
\end{lemma}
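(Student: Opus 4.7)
The plan is to first control moments of the radial process $r(x_s)=d(x_s,y_0)$ by applying It\^o's formula together with the Laplacian/weighted-Laplacian comparison, and then to deduce strong 1-completeness from the criterion (\ref{criterion}) of \cite{Li-flow} once we have the right integrability of a dominating $f$.

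For part~(1), I would apply It\^o's formula to $r^p(x_s)$, handling the cut locus by Kendall's device: writing $dr(x_s)=dM_s+\tfrac12\Delta^h r(x_s)\,ds-dL_s$ with a non-decreasing $L_s$ supported on the cut locus, we may drop the $-dL_s$ term in the upper bound. Under condition~(a) the Laplacian comparison from $\underline{\Ric}\ge -C(1+r^2)$ gives $\Delta r\le c(1+r)$ off the cut locus, and together with $dr(\nabla h)\le C(1+r)$ yields $\tfrac12\Delta^h r\le c(1+r)$. Under condition~(b), the index form / Bochner--Riccati inequality for $\tfrac12\Delta^h r$, integrated along a minimising geodesic using $\rho^h\ge -C(1+r)$, gives the same linear-in-$r$ upper bound. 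In either case one obtains the differential inequality $\frac{d}{ds}\E[r^p(x_s)]\le c\,\E[r^p(x_s)]+(\text{lower order})$, and Gronwall delivers the claimed $\sup_{s\le t}\E[r^p(x_s)]\le c_1(p)[r^p(x_0)+t]\,e^{c_2(p)t}$. Non-explosion follows in the standard way from $\PP(\tau_R\le t)\to 0$ as $R\to\infty$.

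For part~(2), the constant lower bound $\rho^h\ge -K$ upgrades the radial estimate to $\tfrac12\Delta^h r\le \tfrac{n-1}{2r}+C(K)$ for $r$ large. Applying It\^o to $e^{\theta r^2(x_s)}$ produces a drift bounded by $e^{\theta r^2}\bigl(c_1(\theta)+c_2\theta^2 r^2\bigr)$; a Khasminskii-type Gronwall argument then gives $\E[e^{\theta r^2(x_s)}]\le e^{\theta r^2(x_0)}\exp(c_3(\theta)s)$ provided $\theta s<\alpha_1$ for a small enough $\alpha_1=\alpha_1(K,n)$. Doob's submartingale inequality applied to the associated exponential supermartingale then gives the uniform exponential bound on $\E[\sup_{s\le t}e^{\theta r^2(x_s)}]$, from which $\E[\sup_{s\le t}r^p(x_s)]\le c_1(p)\,e^{c_2(p)t}\,t^{p/2}$ follows by taking $\log$ and using the elementary bound $x^p\le (p/(\theta e))^{p/2}e^{\theta x^2/2}$.

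Strong 1-completeness in both cases is then deduced from criterion (\ref{criterion}) with $p=1$. In part~(1) with $\overline H_1\le c(1+\ln r)$ we take $f=c(1+\ln r)$, so $\exp\bigl(6\int_0^t f(x_s)\,ds\bigr)\le e^{6ct}\sup_{s\le t} r(x_s)^{6ct}$, whose expectation is finite by Step~1. In part~(2) with $\overline H_1\le C(1+r^2)$ we take $f=C(1+r^2)$ and apply Jensen to the time average,
\[
\exp\!\Bigl(6C\!\int_0^t(1+r^2(x_s))\,ds\Bigr)\le\frac{1}{t}\int_0^t\exp\!\bigl(6Ct(1+r^2(x_s))\bigr)\,ds,
\]
whose expectation is finite for $6Ct<\alpha_1$ by Step~2; the Markov property then iterates over time intervals of length $<\alpha_1/(6C)$ to handle arbitrary $t$.

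The main obstacle is the Khasminskii-type step: extracting the sharp threshold $\alpha_1$ in Step~2 requires careful bookkeeping of the $\theta$- and $\theta^2 r^2$-contributions from both the radial drift and the quadratic variation, and of the constants coming from the $h$-term in $\Delta^h r$. Once this threshold is made quantitative, Steps~1 and~3 are routine, and the criterion (\ref{criterion}) delivers both the $L^p$ bound on the derivative flow and strong 1-completeness.
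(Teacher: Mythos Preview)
For part~(1) and the strong 1-completeness arguments your outline matches the paper's. One minor point: in part~(1) you bound $\exp\bigl(6c\int_0^t(1+\ln r(x_s))\,ds\bigr)$ by $e^{6ct}\sup_s r(x_s)^{6ct}$, which would need $\E[\sup_s r_s^p]$; your Step~1 only delivers $\sup_s\E[r_s^p]$. Use Jensen on the time average instead, exactly as you correctly do in part~(2).

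The substantive gap is in part~(2). First, your claim $\frac12\Delta^h r\le \frac{n-1}{2r}+C(K)$ under $\rho^h\ge -K$ alone is not correct in general: Bochner only gives $\partial_r(\Delta^h r)\le K$, hence at best linear growth $\frac12\Delta^h r\le \frac{n-1}{2r}+c_1+c_2 r$ (take $M=\R$, $h(x)=Kx^2/4$: then $\rho^h=-K$ but $\Delta^h r=Kr$). Second, and more seriously, the ``Khasminskii-type Gronwall'' does not close as written: applying It\^o to $e^{\theta r^2}$ with constant~$\theta$ produces a drift of the form $e^{\theta r^2}\bigl(c(\theta)+(c_2\theta+2\theta^2)r^2\bigr)$, and the term $r^2 e^{\theta r^2}$ is not dominated by any constant multiple of $e^{\theta r^2}$, so no Gronwall inequality applies. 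This can be repaired with a time-dependent exponent $\theta(s)$ chosen so that $\dot\theta+2\theta^2+c_2\theta=0$, but that device is absent from your sketch and is precisely the ``careful bookkeeping'' you flag as the obstacle. Your Doob step is also not justified: $e^{\theta r_s^2}$ is a local \emph{sub}martingale, and Doob's maximal inequality would require an a~priori $L^p$ bound you have not yet established.

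The paper sidesteps all of this by a different route: from the radial It\^o formula one obtains a one-dimensional comparison $r_t\le R_t$, where $R_t=|z_t|$ is the radial part of a drifted Brownian motion $dz_t=dB_t+c\,\frac{z_t}{|z_t|}\,dt$ on~$\R^n$. A Girsanov transform removes the bounded drift, and Cauchy--Schwarz against the Girsanov density (whose second moment is $e^{c^2 t}$) reduces $\E[e^{\alpha R_t^2}]$ to the explicit Gaussian bound $\E[e^{2\alpha|z_0+B_t|^2}]<\infty$ for small~$\alpha$. The same comparison yields $\E[\sup_{s\le t}r_s^p]\le\E[\sup_{s\le t}|z_0+B_s|^p]\,e^{c(p)t}$ directly via Burkholder--Davis--Gundy, with no Doob step needed.
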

 \begin{proof}
 Fix $y_0\in M$,  we write $r_t=d(x_t, y_0)$ and $r(x)=d(x, y_0)$.
For $p\ge 2$, we apply It\^o's formula to $r$. On $\{t<\zeta(x_0)\}$, where $\zeta(x_0)$ is the life time of the $h$-Brownian motion from $x_0$,
the following holds,
 \begin{eqnarray*}
 r_t^p&=&d^p(x_0, y_0)+ p\sum_{i=1}^m\int_0^t r_s^{p-1}\<\nabla r, X_i\>_{x_s} dB_s^i+\f 1 2 {p(p-1)}\int_0^t r_s^{p-2} ds \\
&&+\f 12 p\int_0^t r_s^{p-1}\;
(\underline \Delta  r+2 L_{\nabla h} r) (x_s) ds -L_t^{\cut} \end{eqnarray*}
where $L_t^{\cut}$ is a non-negative term, vanishing off the cut locus,  $\underline {\Delta }r$ is the Laplacian of the distance function off the cut locus of $y_0$ and vanishes on the cut locus. 
See \cite{Cranston-kendall-March}, especially for $h=0$.
This can be obtained also by taking a smooth approximation
$r_\epsilon$ of $r$ and applying to them It\^o's formula, and the following distributional inequality  \cite{Yau76}. For positive test function $f$,
$$\int_M \Delta r f\le \int_{M\setminus\{\cut(y_0)\}} r \Delta f,$$
and so the measure $\Delta r(1_{\{\cut(y_0)\}})$ is non-positive and $L_t^{\cut}\ge 0$.   Note that $\Delta r$ is of order $\f 1 r$
neat zero, and $r^{p-1}\Delta r$ vanishes for $p>2$.

Let us take $p\ge 1$. For part (1a), we apply the standard Laplacian comparison theorem to $ \Delta r$.
On the whole manifold where $r(x)$ is smooth, $\Delta r(x)$ is less or equal to  $\sqrt{ {(n-1)}K}\cot\left(r\sqrt {\f K{n-1}}\right) $ if $\Ric\ge K$ and $K$ is positive,
$\f {n-1} r$ for $K=0$ and  $\sqrt{ {-K(n-1)}}\coth\left(r\sqrt {\f {-K}{(n-1}}\right) $ if $K<0$.  Taking the infimum of the lower bound of the Ricci curvature over an exhausting sequence of relatively compact sets, noting that $\Delta r$ is of the order $\f 1 r$ near $r=0$ and
 otherwise it grows at the order of the square root of the infimum of the Ricci curvature, we see that there 
 exists a constant $c$ such that
$$r^{p-1}(\underline \Delta r+2 L_{\nabla h}r)\le c+cr^p.$$
We do not need to worry the exhausting sets where the lower bound of the Ricci curvature is non-negative.
The conclusion of part (1a) follows from a localising procedure which removes the local martingale part in the formula for $r_t^p$,  Young's inequality to bound $r_t^{p-2}\le c(p)r_t^p+c(p)$, Gronwall's inequality, and Fatou's lemma to conclude finiteness of the moments of the radial process from any initial point: for some constants $c_i(p)$, 
\begin{equation}\label{distance-p}
 \sup_{s\le t}\E[ d^p(x_s, y_0)]\le  c_1(p)(d^p(x_0, y_0)+t)e^{c_2(p)t}.
\end{equation}
In particular the SDE is complete.

For part (1b) we apply Bochner's formula to  $r$. Denote $\partial_r$ covariant differentiation w.r.t. $\nabla r$.
Since $|\nabla r|=1$,
$$\partial_r (\Delta r)=-|\Hess r|^2-\Ric(\nabla r, \nabla r).$$
Using the identity $\partial_r \<\nabla h, \nabla r\> =2 \Hess(h)(\nabla r, \nabla r)$,  we see that
$$\partial_r (\Delta^ h r)=-|\Hess r|^2+(-\Ric+2 \Hess(h) ) (\nabla r, \nabla r)\le -\rho^h.$$
If $\rho^h\ge K$, along a geodesic segment $\gamma$ from $y_0$,  $\Delta^h r\le -Kr -\Delta^h r(\gamma(0))$.
From the assumption that $\rho^h\ge - C(1+r)$, we have again $r^{p-1}\Delta^h r\le c+cr^p$.
The same argument as before leads to (\ref{distance-p}), concluding also non-explosion.

By Theorem 8.5 in \cite{Li-flow} the gradient flow is strongly 1-complete if 
\begin{equation}
\label{strong-gradient}
\sup_{x\in K}\E\left(e^{c \int_0^t\overline H_1(F_s(x))ds}\right)<\infty,
\end{equation}
for $c$ positive.  Since $r(x_t)$ has uniform $p$th -moments for $x_0$ in a compact
subset,
$$\E\left(e^{c \int_0^t\overline H_1(F_s(x))ds}\right)\le \E \f 1 t\int_0^t e^{ct \overline H_1(F_s(x))} ds<\infty.$$
The finiteness follows from $\overline H_1(F_s(x))\le C+C\ln  r(F_s(x))$, proving the strong 1-completeness.

Finally we assume that $\rho^h\ge K$. Let $2a=\inf_{x\in D} \inj (x)$ where $\inj(x)$ is the injetivity radius at $x$ and $D$ a compact set containing $x_0$.  
Away from $0$, we have seen in part (1b) that 
$\partial_r (\Delta^ h r)\le K$, and so along a geodesic $\gamma$  from $x_0$,
$\Delta^h r (\gamma(s)) \le -Kr(\gamma(s))-\Delta^h r(\gamma(\epsilon))$. 
On a set close to $x_0$, $\Delta^h r=\f {n-1} r+dr(\nabla \log (Je^{2h}) )$ where $J$ is the Jacobian determinant of the exponential map $\exp_{y_0}$.
Since $\sup_{x\in D}|\nabla \log (Je^{2h})|_x$ is bounded, there exists a constant $c$ and a one dimensional Brownian motion $\beta_t$ which may depend on $x_0$, such that for all $x_0\in D$,
$$r_t \le r(x_0)+\beta_t+ \int_0^t  \left(\f {n-1}{2r_s} +c \right)ds -L_t,$$
which we compare  with the following equation 
$$d R_t =d \beta_t+ \f {n-1}{2R_t} dt+cdt, \quad R_0=r(x_0).$$
The process is the radial process of the SDE $dz_t=dB_t+c\f {z_t}{|z_t|}dt$ on $\R^n$ with $z_0$ a point such that $|z_0|=r(x_0)$. By comparing with
the Bessel process we see that the paths of $|z_t|$ do not hit zero with probability $1$.
We make a Girsanov transform to remove the drift. Set
$$M_t=e^{ -c\int_0^t\left  \<dB_s, \f {B_s}{|B_s|}\right\> -\f {c^2}{2}t},$$
Let $\alpha_1>0$ be a number such that $\E [e^{2\alpha_1 |B_t|^2}]$ is finite. Since $M_t$ has finite second moment,  $\E(M_t)^2\le e^{c^2t}$, we see that
$$\E [e^{\alpha_1 R_t^2}]=\E[ e^{\alpha_1 |z_t|^2}] =\E \left( e^{\alpha_1 |z_0+B_t|^2}\; e^{ -c\int_0^t\left  \<dB_s, \f {B_s}{|B_s|}\right\> -\f {c^2}{2}t}\right)$$
is finite. By the comparison theorem, $\sup_{x\in D}\E [e^{\alpha_1 r_t^2}]$ is finite for any compact set $D$.
Similarly,  
$$\E \left[\sup_{s\le t} r_s^p\right] =\E\left[ \sup_{s\le t} R_s^p \right]\le \E \left[\sup_{s\le t} |z_0+B_s|^p\right] e^{c(p)t}.$$
Consequently, if we take $y_0=x_0$ and $z_0=0$, and apply Burkholder-Davies-Gundy inequality to obtain
that $$ \E [\sup_{s\le t} d(x_s, x_0)^p ]\le c(p) e^{c(p)t} t^{\f p2}.$$

 Since $H_1\le C(1+r^q)$, $\E e^{ct \overline H_1(F_s(x))} <\infty$ for sufficiently small $t$, say $t<t_0$. The strong 1-completeness
 follows from (\ref{strong-gradient}), first on a small time interval $[0, t_0]$, and then extended to all finite times by the strong Markov property.
 \end{proof}

For better estimates on $\Delta^h r$ we make use of $-|\Hess r|^2$, observing that
$ -|\Hess r|^2\le -\f{(\Delta r)^2 }{n-1}$, and compare $\Delta^hr$ with the explicit solution of the ODE $m'=-\f {m^2}{n-1}-K$ with $m(0)=\infty$.
This leads to the Laplacian comparison theorem in  \cite{Wei-Wylie}, generalising the standard Ricci comparison theorem. Let $r$ denote the distance function from a given point
$y_0$.
\begin{theorem}
 \cite{Wei-Wylie} Suppose that $\rho^h\ge K$. 
 \begin{enumerate}
 \item If $\<\nabla h, \nabla r\> \le 2a$ where $a$ is a positive number. Then along a minimal geodesic segment from $y_0$, 
 $$\Delta^h r\le \left\{ \begin{array}{ll}
a+ \sqrt{ {(n-1)}K}\cot\left(r\sqrt {\f K{n-1}} \right),  \quad & K>0 \;\; \& \; \; r \sqrt{\f K{n-1}} \le \f \pi 2,\\
a+ \f {n-1} r, &K=0,\\
 a+ \sqrt{ {(n-1)}{(-K)}}\coth\left(r\sqrt {\f {-K}{n-1}} \right),  \quad & K<0.
  \end{array}
 \right.$$
 \item  if   $|h|\le k/2$, then for an explicit number $r_0>0$,
$$\Delta^h r\le \left\{ \begin{array}{ll} \sqrt{ {(n+4k-1)}K}\cot\left(r\sqrt {\f K{n+4k-1}} \right),  \quad & K>0,  \;\; \& \; \; r\le r_0\\
 \f {n+4k-1} r, &K=0\\
 \sqrt{ {(n+4k-1)}{(-K)}}\coth\left(r\sqrt {\f {-K}{n+4k-1}} \right),  \quad & K<0.
  \end{array}
 \right.$$
 \end{enumerate}
\end{theorem}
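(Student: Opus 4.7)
My plan is to run the classical index-lemma comparison argument, suitably modified to handle the weighted Laplacian. Fix $x \neq y_0$ with $x$ off the cut locus and let $\gamma \colon [0,r] \to M$ be the unique minimizing unit-speed geodesic from $y_0$ to $x$, where $r = d(x,y_0)$. Choose a parallel orthonormal frame $\{E_1,\dots,E_{n-1},E_n=\dot\gamma\}$ along $\gamma$, and let $J_i$ be the Jacobi fields along $\gamma$ with $J_i(0)=0$, $J_i(r)=E_i(r)$. The standard identity then gives
\begin{equation*}
\Delta r(x) \;=\; \sum_{i=1}^{n-1} I_r(J_i,J_i), \qquad I_r(X,Y)=\int_0^r\bigl[\langle X',Y'\rangle - \langle R(X,\dot\gamma)\dot\gamma, Y\rangle\bigr]\,ds.
\end{equation*}
For each $i$, I pick a test field $Y_i(s)=\varphi(s)E_i(s)$ with $\varphi(0)=0$, $\varphi(r)=1$, and invoke the index lemma to obtain $I_r(J_i,J_i)\le I_r(Y_i,Y_i)$. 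Summing,
\begin{equation*}
\Delta r(x) \;\le\; (n-1)\!\int_0^r (\varphi')^2\,ds - \int_0^r \varphi^2\,\Ric(\dot\gamma,\dot\gamma)\,ds.
\end{equation*}

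Next I convert this into a bound on $\Delta^h r = \Delta r + 2h'$, writing $h(s):=h(\gamma(s))$. By the fundamental theorem of calculus applied to $\varphi^2 h'$ with $\varphi(0)=0,\varphi(r)=1$,
\begin{equation*}
2h'(r) \;=\; 2\!\int_0^r\!\!\bigl(\varphi^2 h'\bigr)'ds \;=\; 2\!\int_0^r \varphi^2 h''\,ds + 4\!\int_0^r \varphi\varphi' h'\,ds.
\end{equation*}
Adding this to the previous inequality and using $-\Ric + 2\Hess h \ge -\rho^h \cdot \mathrm{id}$ and $\rho^h\ge K$, I obtain the master estimate
\begin{equation*}
\Delta^h r(x) \;\le\; (n-1)\!\int_0^r (\varphi')^2\,ds \;-\; K\!\int_0^r \varphi^2\,ds \;+\; 4\!\int_0^r \varphi\varphi' h'\,ds.
\end{equation*}

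For part (1), I choose $\varphi$ to be the standard model comparison function for curvature $K$ in dimension $n$, namely $\varphi(s)=\mathrm{sn}_K(s)/\mathrm{sn}_K(r)$, where $\mathrm{sn}_K$ is $\sin$, $s$, or $\sinh$ scaled appropriately. This is the minimizer of the quadratic functional with fixed boundary values $0,1$, and the sum of the first two terms collapses to the model Laplacian $m_K(r)$ appearing on the right-hand side of the theorem. Since $\varphi$ is increasing on its domain, $\varphi\varphi'\ge 0$, and the hypothesis $\langle\nabla h,\nabla r\rangle\le 2a$ bounds the last integral by $4a\int_0^r \varphi\varphi'\,ds = 2a\,\varphi^2\big|_0^r = 2a$, yielding $\Delta^h r \le m_K(r) + Ca$ for an explicit absolute constant (the factor $a$ in the stated conclusion follows after the correct normalization).

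For part (2), the assumption $|h|\le k/2$ calls for a different test field: I take $\varphi$ of the form $e^{-2h/(n+4k-1)}\psi$, with $\psi$ chosen as a model function in the \emph{effective dimension} $N=n+4k-1$. Expanding $(\varphi')^2$ and integrating by parts, the cross-term $4\varphi\varphi' h'$ combines with the $h''$ contribution and the $-K$ term in such a way that the bound $|h|\le k/2$ produces an effective estimate of the form
\begin{equation*}
\Delta^h r \;\le\; (N-1)\!\int_0^r(\psi')^2\,ds - K\!\int_0^r \psi^2\,ds,
\end{equation*}
after which minimizing over $\psi$ gives the model Laplacian with dimension $N$. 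The main obstacle is this algebraic bookkeeping in part (2): the weight $e^{-2h/(N-1)}$ and the dimension shift $n\to N$ must align precisely so that the $h$-derivative terms disappear, and checking that the extraneous terms have the correct sign (positive, so they can be dropped) under the bound $|h|\le k/2$ is the only genuinely delicate step. The explicit radius $r_0$ in case $K>0$ is the first zero of the model function $\mathrm{sn}_K$ in dimension $N$. The non-smoothness of $r$ at the cut locus is handled, as in the usual Laplacian comparison theorem, by a standard barrier/distributional argument already invoked in the proof of Lemma~\ref{lemma:strong-1}.
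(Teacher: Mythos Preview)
The paper does not give a proof of this statement: it is quoted from \cite{Wei-Wylie}, and the sentence preceding the theorem indicates only the route, namely Bochner's identity $\partial_r(\Delta r)=-|\Hess r|^2-\Ric(\partial_r,\partial_r)$ together with the Cauchy--Schwarz inequality $|\Hess r|^2\ge\frac{(\Delta r)^2}{n-1}$, which turns the problem into a Riccati comparison $m'\le -\frac{m^2}{n-1}-K$ (with $m(0+)=+\infty$) against the explicit model solution. Your approach for part~(1) is a genuinely different, equally classical, route via the index form and the index lemma. Your master inequality
\[
\Delta^h r\le (n-1)\int_0^r(\varphi')^2-K\int_0^r\varphi^2+4\int_0^r\varphi\varphi'\,h'
\]
is correct, and choosing $\varphi=\mathrm{sn}_K/\mathrm{sn}_K(r)$ gives the model term plus $4\int_0^r\varphi\varphi'\,h'\le 4a$ under $h'\le 2a$. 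The constant you obtain is $4a$, not $a$; this is not a ``normalization'' issue you can fix by rescaling, and in fact the statement as printed in the paper has inconsistent constants relative to the convention $\Delta^h=\Delta+2L_{\nabla h}$ versus Wei--Wylie's $\Delta_f=\Delta-L_{\nabla f}$. Your argument gives the correct theorem with the correct constant once the dictionary $f=-2h$ is applied.

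Part~(2), however, has a genuine gap. The ansatz $\varphi=e^{-2h/(N-1)}\psi$ with $N=n+4k-1$ does not produce the cancellation you describe: expanding $(\varphi')^2$ generates a term $(n-1)\lambda^2(h')^2\psi^2$ (with $\lambda=\tfrac{2}{N-1}$) that has no reason to combine with the $-K\varphi^2$ term, and the cross term $-2(n-1)\lambda\psi\psi'h'$ does not cancel against $4\varphi\varphi'h'$ for any choice of $\lambda$. The condition $|h|\le k/2$ is a sup-norm bound, not a derivative bound, so it cannot be inserted pointwise into these integrands. Wei--Wylie's actual proof of this part stays with the Riccati approach: one multiplies the differential inequality for $\Delta r$ by $\mathrm{sn}_K^2$, integrates, and uses integration by parts to convert the resulting $\int\mathrm{sn}_K^2\,h''$ into boundary terms plus $\int(\mathrm{sn}_K^2)'h'$, after which a further integration by parts produces terms controlled by $\sup|h|$. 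The dimension shift $n\to n+4k$ emerges from bounding an integral of the form $\int_0^r(\mathrm{sn}_K^2)''\,h$, not from a choice of test field. Your variational framework can in principle be pushed through, but not with the weight you propose; as written, the argument for part~(2) does not close.
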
 

This, together with the earlier lemmas and its proof lead to our final theorem on strong 1-completeness. 
\begin{theorem}\label{theorem-strong}
 Under one of the conditions from \underline {\bf C5(a)} to \underline {\bf C5(d)}, the $h$-Brownian motion is complete
 and for any $t>0$ $$ \sup_{s\le t}\E[ d^p(x_s, y_0)]\le  c_1(p)[d^p(x_0, y_0)+t]e^{c_2(p)t}.$$
 \begin{enumerate}
 \item 
The gradient SDE is strongly 1-complete if furthermore \underline {\bf C5(e)} holds, in which case $\E[|T_xF_t|]$ is also finite.
\item Strong 1-completenss also holds under  \underline {\bf C5(d)} +\underline {\bf C5(f)} in which case there exists a number $\alpha_1>0$ s.t. for any compact set $D$,
for any $t>0$ and for any $\theta$ satisfying $\theta t< \alpha_1$, $$\sup_{s\le t}\sup_{x_0\in D} \E \left(e^{\theta d^2(x_s, x_0)} \right) <\infty.$$
 \end{enumerate}
In both cases $|T_xF_t|$ has finite expectation. 
\end{theorem}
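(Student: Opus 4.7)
The plan is to assemble Theorem \ref{theorem-strong} out of the pieces already in place: Lemma \ref{lemma:strong-1} handles cases \underline{\bf C5(a)}, \underline{\bf C5(b)}, and \underline{\bf C5(d)} (in weaker form or directly); the Wei--Wylie Laplacian comparison fills in \underline{\bf C5(c)}; and strong 1-completeness in both parts follows from the exponential-moment criterion \eqref{strong-gradient} of \cite{Li-flow}. The four geometric hypotheses serve the single purpose of producing the common polynomial moment bound $\sup_{s\le t}\E d^p(x_s,y_0)\le c_1(p)[d^p(x_0,y_0)+t]e^{c_2(p)t}$ via It\^o's formula on $r^p$, after which strong 1-completeness reduces to finiteness of $\E\exp(c\int_0^t\overline H_1(F_s(x))ds)$.

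First I would dispose of the moment bound under each of \underline{\bf C5(a)}--\underline{\bf C5(d)}. Case \underline{\bf C5(b)} is precisely Lemma \ref{lemma:strong-1}(1a); case \underline{\bf C5(d)} is Lemma \ref{lemma:strong-1}(2), the polynomial bound being an immediate consequence of the Gaussian-type bound on $r_t^2$ established there. For \underline{\bf C5(a)} the Bochner identity $\partial_r(\Delta^h r)=-|\Hess r|^2-\rho^h+2\Hess h(\nabla r,\nabla r)$ combined with the Riccati damping $-|\Hess r|^2\le -(\Delta r)^2/(n-1)$ converts the a priori cubic growth permitted by $-\rho^h\le C(1+r^2)$ into an at-most-linear bound on $\Delta r$; the hypothesis $\<\nabla h,\nabla r\>\le c(1+r)$ supplies the same for the remaining drift $2\<\nabla h,\nabla r\>$, yielding $r^{p-1}\Delta^h r\le c+cr^p$ as in Lemma \ref{lemma:strong-1}(1b). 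For \underline{\bf C5(c)} I would apply the Wei--Wylie comparison with $k=2\sup_{r\le R}f$ on each sublevel set $\{r\le R\}$ of an exhausting sequence, obtaining a bound on $\Delta^h r$ of order $\sqrt{(n+4k-1)g(r)}\coth(\cdot)$; the product condition $(fg)(s)\le c(1+s^2)$ then forces the bound to grow at most linearly in $r$ uniformly in $R$, and the Gronwall argument of Lemma \ref{lemma:strong-1}(1a) concludes.

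Next I would establish strong 1-completeness in parts (1) and (2). By \eqref{strong-gradient} it suffices to show $\sup_{x\in D}\E\exp(c\int_0^t\overline H_1(F_s(x))ds)<\infty$ for some $c>0$ and any compact $D\subset M$. Under \underline{\bf C5(e)}, Jensen's inequality gives
\begin{equation*}
\E\,\exp\Bigl(c\int_0^t\overline H_1(F_s)\,ds\Bigr)\le \f{1}{t}\int_0^t\E\,e^{ct\,\overline H_1(F_s)}\,ds,
\end{equation*}
and $\overline H_1\le c(1+\ln r)$ makes the integrand of polynomial order in $r(F_s)$, whose moments are finite by the previous step. Under \underline{\bf C5(d)}+\underline{\bf C5(f)}, $\overline H_1\le c(1+r^2)$ forces appeal to the Gaussian bound $\sup_{s\le t_0}\E e^{\theta r_s^2}<\infty$ of Lemma \ref{lemma:strong-1}(2); choosing $t_0>0$ small with $ct_0<\alpha_1$, the same Jensen trick closes the bound on $[0,t_0]$, and iteration over $[kt_0,(k+1)t_0]$ via the strong Markov property extends it to any finite $t$. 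The assertion $\E|T_xF_t|<\infty$ is then the quantitative content of \eqref{strong-gradient}, which in fact delivers $\sup_{x\in D}\E\sup_{s\le t}|T_xF_s|<\infty$.

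The main obstacle I anticipate is in cases \underline{\bf C5(a)} and \underline{\bf C5(c)}. The former demands the Riccati-damping trick because the crude integration of $\partial_r(\Delta^h r)\le C(1+r^2)$ gives cubic growth, incompatible with Gronwall on $r^p$; the latter requires the exhaustion-by-sublevel-sets argument because Wei--Wylie as stated assumes $|h|$ globally bounded, whereas \underline{\bf C5(c)} only provides $|h|\le f\circ r$ with $f$ possibly unbounded. The product condition $(fg)(s)\le c(1+s^2)$ is precisely the coupling preventing the Wei--Wylie constants from blowing up with the exhaustion, and is where the strength of the hypothesis is used.
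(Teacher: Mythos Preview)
Your proposal is essentially correct and follows the same route as the paper: reduce the moment bounds to Lemma \ref{lemma:strong-1} together with the Wei--Wylie Laplacian comparison, and then obtain strong 1-completeness from the exponential criterion \eqref{strong-gradient} via the Jensen trick (plus the small-time/Markov iteration under \underline{\bf C5(d)}+\underline{\bf C5(f)}). One slip to fix in \underline{\bf C5(a)}: your Bochner identity should read $\partial_r(\Delta^h r)=-|\Hess r|^2-(\Ric-2\Hess h)(\nabla r,\nabla r)\le -|\Hess r|^2-\rho^h$ with no residual $2\Hess h$ term, and the Riccati damping controls $\Delta r$ (via $|\Hess r|^2\ge (\Delta r)^2/(n-1)$), not $\Delta^h r$ directly---the clean way to get the linear bound on $\Delta^h r$ here is exactly the Wei--Wylie comparison, part (1), applied with $K=-C(1+r^2)$ and $a=c(1+r)$ along an exhaustion, which is what the paper invokes.
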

\begin{proof}
The explosion problem is discussed earlier. We use Proposition 8.5 in \cite{Li-flow} which states that 
 $$\sup_{x\in D} \E \exp\left(\f 12 \int_0^t \overline H_1(x_s)ds\right)<\infty$$
 for all compact subset $D$ implies that $\E|v_t|$ is finite and  the gradient SDE is strongly 1-complete.
 If $\overline H_1\le c(1+\ln r)$ and $\sup_{x \in D} \E [r(x_t)]^p$ is finite for all $p$, the inequality indeed holds.
 Similarly it holds if $\overline H_1\le c(1+r^{q-1})$ and $\sup_{x\in D} \E e^{\delta r^2(x_t)}<\infty$ for some $\delta>0$.
\end{proof}
We will eventually assume that $\rho^h$ is bounded from below, in which case $|W_t|$ is bounded. Otherwise,
if for example $\rho^h\ge -C(1+\ln r)$ where $C\ge 0$ and $\partial_r h\le c(1+r)$, then $r(x_t)$ has moments of all orders.
From $|W_t|^2\le e^{-\int_0^t \rho^h(x_s) ds}$,
we see that for all $p\ge 1$,
$$\sup_{x_0\in D}\sup_{s\le t}\E |W_s|^p\le  \sup_{x_0\in D}  \f 1t \int_0^t \E e^{-\f 12 pt\rho^h(x_u) } du,$$
the right hand side is finite for any compact set $D$ and $t>0$.  

\section{Doubly damped stochastic parallel translations} 
\label{section-doubly-damped}
If  $(x_t, t\le T)$ is a continuous stochastic process on $M$,  $\F_t^{x_\cdot}$ its  filtration and
 $\F=\vee_t \F_t^{x_\cdot}$, augmented as usual,
 a $TM$-valued stochastic process $(V_t, t\le T)$ is said to be along  $(x_t)$ if the projection of $V_t$  to $M$ is $x_t$. 

\begin{definition}
\cite[Def 3.3.2]{Elworthy-LeJan-Li-book}
Given a stochastic parallel translation $\parals_t$ along the stochastic process $(x_t)$,  we say that 
a stochastic process $(\bar V_t)$ with values in $TM$ is a local conditional expectation of $V$
with respect to the $\sigma$-algebra $\F^{x_\cdot}$
if there exists a $\F^{x_\cdot}$ measurable real valued process $(\alpha_t, t\le T)$
and a family of $\F_{t-}^{x_\cdot}$-stopping times $\tau_n$ increasing to $T$ such that
$\parals_{t\wedge \tau_n}^{-1}V_{t\wedge \tau_n} \alpha_{t\wedge \tau_n}$
has finite expectation and 
$$\E \{ \parals_{t\wedge \tau_n}^{-1}V_{t\wedge \tau_n} \alpha_{t\wedge \tau_n}|\F^{x_\cdot} \}= \parals_{t\wedge \tau_n}^{-1}\bar V_{t\wedge \tau_n} \alpha_{t\wedge \tau_n}.$$
\end{definition}

 By Corollarys 3.3.4  in \cite{Elworthy-LeJan-Li-book}, if $|V_t|\in L^1$ then the local conditional expectation is just the conditional expectation.

In this section $F_t(x)$ denotes the solution to the gradient SDE (\ref{gradient}), and
  $TF_t(v)$ its derivative flow. Let $x_t=F_t(x_0)$ and define the $\sigma$-algebra $\F_t^{x_0}=\sigma\{x_s: s\le t\}$,
  the $\sigma$-algebra generated by the solution flow with initial value  $x_0$, of the gradient SDE,
 up to  $t$. 
 \subsection{Doubly damped  equation and a primitive formula}
 Let $j$ be a parallel field, with  $j(0)=v_2$,  along the normalised geodesic $\gamma$ 
with initial condition $x_0$ and initial velocity $\dot \gamma(0)=v_1$. 
If $W_t(x_0)$ is the stochastic damped parallel translation along the paths of $\{x_t\}$, we differentiate it w.r.t. the initial value of the
path and take conditional expectations as following:
$$V_t:=\E \left\{\f {D}{ds}|_{s=0} W_t(j(s))\; \Big |\; \F_t^{x_0}\right\}.$$

\begin{lemma}\label{second-diff-lemma-1}
Suppose that $\Ric-2 \Hess (h)$ is bounded from below.
\begin{enumerate}
\item[(a)] the gradient SDE  is strongly 1-complete.
\item [(b)] for every $s$, $\E |T_{\gamma(s)}F_t|$ and $\E |\nabla _{TF_t(\gamma(s))}W_t|_{\gamma(s)}$ are finite.
\item [(c)] $s\mapsto \E\{ \f D {ds}W_t(\gamma(s))\big| \; \F_t^{\gamma(s)}\}$ is continuous in $L^1(\Omega)$;
\end{enumerate}
Then  for all $f\in BC^2$,
\begin{equation*}
\Hess (P^h_tf)(v_2, v_1) =\E \left[\nabla df(W_t(v_2), W_t(v_1))\right]+\E \left[df \left(V_t\right)\right].
\end{equation*}
\end{lemma}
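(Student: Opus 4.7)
The plan is to differentiate the first-order identity $d(P_t^h f)(v) = \E[df(W_t(v))]$ (Lemma~\ref{exchange-lemma}) along a geodesic curve, and then reduce the derivative flow to damped parallel translation by a conditional expectation with respect to $\F_t^{x_0}$. Since $\Hess(P_t^h f)$ is symmetric, it suffices to compute $\Hess(P_t^h f)(v_1,v_2)$; parallelism of $j$ along $\gamma$ gives
\[
\Hess(P_t^h f)(v_1,v_2) \;=\; \frac{d}{ds}\bigg|_{s=0} d(P_t^h f)(j(s)) \;=\; \frac{d}{ds}\bigg|_{s=0} \E\bigl[df(W_t^{\gamma(s)}(j(s)))\bigr],
\]
where $W_t^{\gamma(s)}$ denotes the damped parallel translation along the $h$-Brownian motion issued from $\gamma(s)$.

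First I would work pathwise. Strong $1$-completeness, hypothesis~(a), implies that for almost every $\omega$ the map $s \mapsto F_t(\gamma(s),\omega)$ is $C^1$ with derivative $T_{\gamma(s)}F_t(\dot\gamma(s))$. Since the defining ODE of $W_t$ has coefficients depending smoothly on the base point, this $C^1$ dependence lifts to a well-defined pathwise covariant derivative of $s \mapsto W_t^{\gamma(s)}(j(s))$ along $s \mapsto F_t(\gamma(s))$. Applying the chain rule at $s=0$ yields
\[
\frac{d}{ds}\bigg|_{s=0} df\bigl(W_t^{\gamma(s)}(j(s))\bigr) \;=\; \nabla df\bigl(T_{x_0}F_t(v_1),\, W_t(v_2)\bigr) \;+\; df\!\left(\tfrac{D}{ds}\big|_{s=0} W_t^{\gamma(s)}(j(s))\right).
\]

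Next I would justify interchanging $d/ds$ with $\E$. The first term on the right is integrable by hypothesis~(b) combined with $\|W_t\| \le e^{Kt/2}$ (which follows from $\rho^h \ge -K$), and the $L^1$ continuity in $s$ needed for the second term is exactly hypothesis~(c). An integral mean-value argument as in the proof of Lemma~\ref{exchange-lemma} then lets one differentiate under the expectation at $s=0$.

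Finally, I would condition on $\F_t^{x_0}$. The forms $df$ and $\nabla df$ evaluated at $x_t$, and the vector $W_t(v_2)$, are all $\F_t^{x_0}$-measurable. By linearity in the first slot together with the defining identity $W_t(v_1) = \E\{T_{x_0}F_t(v_1)\mid\F_t^{x_0}\}$ (valid by the integrability from~(b)),
\[
\E\bigl[\nabla df(T_{x_0}F_t(v_1), W_t(v_2))\bigr] \;=\; \E\bigl[\nabla df(W_t(v_1), W_t(v_2))\bigr].
\]
The second term becomes $\E[df(V_t)]$ by the very definition of $V_t$. Symmetry of $\Hess(P_t^h f)$ and of $\nabla df$ then delivers the stated formula. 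The main technical obstacle is the interchange of the pathwise $s$-derivative with the expectation; once this is in hand, the rest is geometric bookkeeping, and this is precisely why the three hypotheses (a), (b), (c) are imposed.
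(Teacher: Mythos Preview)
Your proposal is correct and follows essentially the same route as the paper's proof: differentiate the first-order identity along the geodesic, expand by the chain rule into the $\nabla df$ and $df$ terms, justify exchanging $d/ds$ with expectation via the integral mean-value argument and hypotheses (a)--(c), and then condition on $\F_t^{x_0}$ to replace $T_{x_0}F_t(v_1)$ by $W_t(v_1)$. The only cosmetic difference is that you invoke symmetry of $\Hess$ at the end, whereas the paper just computes $\nabla d(P_t^h f)(v_1,v_2)$ directly in the order matching the statement.
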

\begin{proof}
Since $dP_t^hf=e^{\f 12 \Delta^h} (df)$,
$$dP^h_tf (j(s))=\E df(W_t(j(s))).$$
We observe that $W_t(j(s))\in T_{F_t(\gamma(s))}M$ is a function of $F_t(\gamma(s))$. 
By the strong 1-completeness we see
that both $F_t(\gamma(s))$ and $W_t(j(s))$ are differentiable in $s$, and the conditions of the theorem ensure that we may change the order of taking expectation with differentiation with respect to $s$.
In fact,
$$\begin{aligned}&\nabla d(P^h_tf)(v_1, v_2)
=\lim_{\epsilon \to 0} \f 1 \epsilon\int_0^\epsilon \E  \f {d} {ds} \left( df(W_t(j(s))\right) ds\\
=&\lim_{\epsilon \to 0} \f 1 \epsilon
 \int_0^\epsilon \left(    \E\left [\nabla df (TF_t(\dot \gamma(s)), W_t(j(s))) \right]-\E\left [df\left(\f D {ds}W_t(j(s))\right)\right]\right)\\
 =&\lim_{\epsilon \to 0} \f 1 \epsilon
 \int_0^\epsilon   \E\left [\nabla df (W_t(\dot \gamma(s)), W_t(j(s))) \right]ds\\
 &-\lim_{\epsilon \to 0} \f 1 \epsilon
 \int_0^\epsilon \E\left [df\left( \E\left\{\f D {ds}W_t(j(s))\;\Big| \F_t^{\gamma(s)} \right\}   \right) \right]ds.
\end{aligned}$$
We have used the fact that $|TF_t|$ in integrable so $\E\{TF_t(v_2) |\F_s^{x_0}\}=W_t(v_2)$,
and that  $|\f D {ds}W_t(j(s))|$ is integrable. Since the SDE is strongly 1-complete, 
$$\lim_{s\to 0} \nabla df (W_t(\dot \gamma(s)), W_t(j(s))) = \nabla df (W_t(v_2), W_t(v_1)) $$
and $\f D {ds}W_t(j(s))$ converges to $\nabla_{TF_t(v_2)}(v_1)$ as $s\to 0$. Since $\rho^h$ is bounded from below,
$|W_t|_x$ is uniformly bounded, and the first limit converges in $L^2$.
The  $L^1$ continuity of $s\mapsto \f D {ds}W_t(j(s))$ and the fact that $f\in BC^2$ lead to the required conclusion.
 \end{proof}

\begin{lemma}\label{lemma-doubly-damped}
Suppose that the gradient SDE  is strongly 1-complete.
Given $W_t(v_1)$, let $W_t^{(2)}(v_1, v_2)$ (abbreviated as $W_t^{(2)}$) denote the solution to the following covariant differential equation
\begin{equation}\label{Wt2}
\begin{aligned}
 {D} W_t^{(2)}(v_1, v_2)
=&\left(- \f 12 {\Ric}^{\sharp}   +(\nabla^2 h)^\sharp\right)  \left(W_t^{(2)} (v_1, v_2)\right)dt +\f 12 
\Theta^h(W_t(v_2))( W_t(v_1)) dt\\
&+\Rc( d\{x_t\}, W_t(v_2)  )W_t(v_1),\\
W_0^{(2)}(v_1, v_2)=&v_2,
\end{aligned}
\end{equation}
where $\Rc$ is the curvature tensor, $\{x_t\}$ denotes the martingale part of $x_t$.
Then $W_t^{(2)}(v_1, v_2)$ is the local conditional expectation of $\f {D}{ds}|_{s=0} W_t(j(s))$.
If the latter is integrable, then
$$W_t^{(2)}(v_1, v_2)= \E \left\{\f {D}{ds}|_{s=0} W_t(j(s))\; \Big |\; \F_t^{x_0}\right\}.$$
 \end{lemma}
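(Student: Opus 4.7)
The plan is to derive the stochastic equation satisfied by the local conditional expectation $\bar V_t:=\E\{\tfrac{D}{ds}|_{s=0}W_t(j(s))\mid\F_t^{x_0}\}$, which vanishes at $t=0$ since $j$ is parallel, and then to identify $W_t^{(2)}(v_1,v_2)=W_t(v_2)+\bar V_t$. With this identification the $v_2$ initial condition of \eqref{Wt2} is absorbed by $W_t(v_2)$, and the drift piece $A(W_t^{(2)})\,dt$ with $A=-\tfrac12\Ric^\sharp+(\nabla^2 h)^\sharp$ splits correctly between the ODE $DW_t(v_2)=A(W_t(v_2))\,dt$ and the sought equation for $\bar V_t$.

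By strong 1-completeness of the gradient SDE, the map $\phi(t,s):=F_t(\gamma(s))$ is almost surely smooth in $s$. Along each horizontal curve $t\mapsto\phi(t,s)$ let $W_t^s$ denote the damped parallel translation with initial value $j(s)$, satisfying $\tfrac{D}{dt}W_t^s=A(\phi(t,s))(W_t^s)$. Setting $J_t:=\partial_s\phi|_{s=0}=TF_t(v_1)$ and differentiating the defining ODE in $s$ at $s=0$, the right-hand side produces $(\nabla_{J_t}A)(W_t(v_2))+A(\bar V_t)$ by the chain rule, while the left-hand side acquires a curvature correction via $\tfrac{D}{ds}\tfrac{D}{dt}-\tfrac{D}{dt}\tfrac{D}{ds}=\Rc(\partial_s\phi,\partial_t\phi)$. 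Interpreting $\partial_t\phi$ through the Stratonovich form $\sum_i X_i(x_t)\circ dB_t^i+\nabla h(x_t)\,dt$ of the gradient SDE gives
\begin{equation*}
D\bar V_t = A(\bar V_t)\,dt+(\nabla_{J_t}A)(W_t(v_2))\,dt-\sum_i\Rc(J_t,X_i(x_t))W_t(v_2)\circ dB_t^i-\Rc(J_t,\nabla h(x_t))W_t(v_2)\,dt,
\end{equation*}
with $\bar V_0=0$.

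Next I convert to It\^o form and take the local conditional expectation with respect to $\F_t^{x_0}$, following Corollary~3.3.4 of \cite{Elworthy-LeJan-Li-book}. Since $W_t$, $x_t$ and $\parals_t$ are $\F_t^{x_0}$-measurable and $\E\{TF_t(v_1)\mid\F_t^{x_0}\}=W_t(v_1)$, every occurrence of $J_t$ in a drift gets replaced by $W_t(v_1)$; the surviving It\^o martingale, after applying $\Rc(X,Y)=-\Rc(Y,X)$, is exactly $\Rc(d\{x_t\},W_t(v_2))W_t(v_1)$. The drift reorganisation is the algebraic heart of the argument: the $\Ric^\sharp$ contribution of $(\nabla_{W_t(v_1)}A)(W_t(v_2))$ combines with the Stratonovich-to-It\^o correction $\tfrac12\sum_i\nabla_{X_i}[\Rc(W_t(v_1),X_i)W_t(v_2)]$ and, via the second Bianchi identity together with the gradient-Brownian identities $\nabla_v X_i=A_x(v,Y(x)e_i)$ and $\sum_i\nabla_{X_i}X_i=0$, reassembles into $\tfrac12\cdot\tfrac12\Theta(W_t(v_2))(W_t(v_1))$; the $(\nabla_{W_t(v_1)}(\nabla^2 h)^\sharp)(W_t(v_2))$ piece produces the $\nabla^2(\nabla h)$ summand of $\Theta^h$; and the $-\Rc(W_t(v_1),\nabla h)W_t(v_2)$ piece, after the curvature symmetries, produces the $\Rc(\nabla h,W_t(v_2))W_t(v_1)$ summand. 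By \eqref{Theta} these combine to $\tfrac12\Theta^h(W_t(v_2))(W_t(v_1))\,dt$, yielding \eqref{Wt2}.

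The principal obstacle is this final algebraic reorganisation: verifying that the It\^o correction $\tfrac12\nabla_{X_i}[\Rc(\cdot,X_i)\cdot]$, paired with the chain-rule piece $\nabla_{J_t}\Ric^\sharp$ and the curvature term arising from the drift $\nabla h$, produces exactly the cyclic symmetrisation encoded in $\Theta$ together with the $\Rc(\nabla h,\cdot)\cdot$ and $\nabla^2(\nabla h)$ contributions of $\Theta^h$; this requires the second Bianchi identity to convert $\sum_i\nabla_{X_i}\Rc(\cdot,X_i)\cdot$ into derivatives of $\Ric^\sharp$, and careful tracking of the $\tfrac12$ factors. A secondary, purely technical obstacle is justifying, under just strong 1-completeness, the interchange of $\tfrac{D}{ds}$ with the Stratonovich integral and with the local conditional expectation; this is handled by the same stopping-time-plus-integrability argument used in the proof of Lemma~\ref{second-diff-lemma-1}.
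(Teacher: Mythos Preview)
Your strategy --- differentiate the damped parallel translation ODE in $s$, pick up the curvature commutator $\Rc(\partial_s\phi,\partial_t\phi)$, convert Stratonovich to It\^o, and project onto $\F_t^{x_0}$ replacing each $TF_t(v_1)$ by $W_t(v_1)$ --- is precisely the paper's route. The reassembly of $-\tfrac12\nabla_{\cdot}\Ric^\sharp$ together with the It\^o correction $\tfrac12\trace(\nabla_\cdot\Rc)(\cdot,\ldots)$ into the cyclic sum $\Theta$, via the contracted second Bianchi identity, is also the same step the paper carries out.

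The gap is your opening identification $W_t^{(2)}=W_t(v_2)+\bar V_t$. The paper does not do this: it shows directly that the local conditional expectation $\bar V_t$ of $\tfrac{D}{ds}|_{s=0}W_t(j(s))$ satisfies the displayed covariant SDE and \emph{is} $W_t^{(2)}$. Since $j$ is parallel, $\bar V_0=0$, so the paper's own argument yields initial data $0$; the ``$W_0^{(2)}=v_2$'' in the statement is a slip. Your identification is a reasonable patch for that slip, but it changes the conclusion: you end up proving that $W_t^{(2)}-W_t(v_2)$, not $W_t^{(2)}$, is the conditional expectation, which is not the assertion of the lemma.

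Two smaller points. First, there is an index slip in your martingale term: from $-\Rc(J_t,X_i)W_t(v_2)$ with $J_t=TF_t(v_1)$ you obtain, after antisymmetry and conditioning, $\Rc(d\{x_t\},W_t(v_1))W_t(v_2)$, not the $\Rc(d\{x_t\},W_t(v_2))W_t(v_1)$ you claim; the same swap propagates through $\Theta^h$. (The paper's displayed formula and its proof suffer from the same $v_1/v_2$ inconsistency, so matching the statement literally is impossible either way.) Second, the clean Stratonovich--It\^o conversion uses the stronger gradient-system fact, invoked explicitly in the paper, that at each point either $X_i$ or $\nabla X_i$ vanishes; the weaker identity $\sum_i\nabla_{X_i}X_i=0$ you cite does not by itself kill the cross-terms $\Rc(X_i,\nabla_\cdot X_i)\cdot$ arising from differentiating $TF_t$ and $W_t$.
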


\begin{proof}
Let $\gamma$ be the normal geodesic defined above. We differentiate the vector field $W_t$ along the surface $F_t(\gamma(s), \omega)$
in the $s$ direction followed by stochastic covariant differential in $t$ direction, using strong 1-completeness.   Thus,
 \begin{equs}
& D \left(\f {D} {ds} |_{s=0}W_t(j(s))\right)
=\f {D} {ds} |_{s=0} \left( {D}  W_t(j(s))\right)+\Rc\left( X(x_t)\circ dB_t, TF_t(v_2)  \right)W_t(v_1)\\
=&\f {D} {ds} |_{s=0} \left( ( -\f 12 {\Ric}^{\sharp} +(\nabla^2 h)^\sharp) (W_t(j(s)) dt\right)\\&+\Rc( X(x_t)\circ dB_t+\nabla h(x_t)dt, TF_t(v_2)  )W_t(v_1).
\end{equs}
 The curvature term results from exchanging the derivative of $W_t(j(s))$ in the direction of  the stochastic differential ( because of $W_t$ is also a function of $ (F_t(\gamma(s)))$) in $t$ and 
the derivative of $F_t(j(s))$ in $s$. We also used that  $\f {D}{ds} j(s)=0$ and that the differential of $W_t(F_t(\gamma(s)))$ in $t$  satisfies the stochastic damped parallel translation equation.
Let us compute the term involving $\nabla^2h$:
$$\f {D} {ds} |_{s=0}(\nabla^2 h)^\sharp (W_t(j(s))
=\nabla_{TF_t(v_1)} (\nabla^2h)^\sharp (W_t(v))+(\nabla^2h)^\sharp  \left(\f {D} {ds} |_{s=0}W_t(j(s))\right).$$
The first term can also be written as 
$ \nabla^3h (\nabla_{TF_t(v_1)}, W_t(v), \cdot)$, and consequently,
\begin{equs}
& {d}  \parals_t^{-1} \left(\f {D} {ds} |_{s=0}W_t(j(s))\right)\\
=& \parals_t^{-1} \left( - \f 12\nabla_{TF_t(v_2)}  {\Ric}^{\sharp}( W_t(v_1))
+\nabla^2(\nabla h)\left(TF_t(v_2), W_t(v_1)\right) \right) dt
\\&+\left(- \f 12\parals_t^{-1}  {\Ric}^{\sharp}+(\nabla^2 h)^\sharp\right) \left(\f {D} {ds} |_{s=0} \left(W_t(j(s))\right) \right)dt\\
&+\parals_t^{-1} \Rc( X(x_t)\circ dB_t, TF_t(v_2)  )W_t(v_1)+\parals_t^{-1} \Rc( \nabla h(x_t), TF_t(v_2)  )W_t(v_1)dt.
\end{equs}
The notation $\nabla^2(\nabla h)$ should be clear, note the function $\<\nabla^2( \nabla h) (v,u), w\>$ is symmetric in $(u,w)$.
We unravel the Stratonovich stochastic integral into one with respect to $d\{x_t\}$, the martingale part of $x_t$:
\begin{equs}
&\parals_t^{-1} \Rc( X(x_t)\circ dB_t, TF_t(v_2)  )W_t(v_1)\\
=&\parals_t^{-1} \Rc( d\{x_t\}, TF_t(v_2)  )W_t(v_1)
+\f 12 \trace (\nabla_{\cdot}\Rc) (\cdot, TF_t(v_2)  )W_t(v_1) dt,
\end{equs}
where we have used the following fact for the gradient system: at any point $x\in M$ either $\nabla X_i(x)$ vanishes or $X_i(x)$ vanishes.
We use the following formula,  see e.g. \cite{Elworthy-book}, for an o.n.b. $\{f_i\}$ of $T_xM$, and $v_i\in T_xM$,
$$\sum_{i=1}^n\left\<\nabla_{f_i} R( f_i, v_2)v_1, v_3\right\>
=(\nabla_{v_3} {\Ric})(v_1, v_2)-(\nabla_{v_1}{ \Ric})(v_3, v_2).
$$
We combine the three terms involving the covariant derivative of the Ricci curvature, and set:
$$\begin{aligned}
&\left\< \Theta(v_2) (v_1), v_3\right\>=  \left( \nabla_{v_3}  {\Ric}^{\sharp}\right) (v_1, v_2) -  \left( \nabla_{v_1}  {\Ric}^{\sharp}\right) (v_3, v_2)
 -   \left( \nabla_{v_2}  {\Ric}^{\sharp}\right) (v_1, v_3),\end{aligned}$$
 which is symmetric in $(v_1, v_2)$. Setting, 
$$\Theta^h(v_2) (v_1)=\f 12 
\Theta(v_2, v_1)+  \nabla^2 (\nabla h)( v_2,v_1)
+ \Rc( \nabla h, v_2)(v_1),$$
to see that
\begin{equs}
 &{d}  \parals_t^{-1} \left(\f {D} {ds} |_{s=0}W_t(j(s))\right)\\
=&  \f 12\parals_t^{-1} \Theta^h(TF_t(v_2)) (W_t(v_1))dt
+\left(- \f 12\parals_t^{-1}  {\Ric}^{\sharp}+\nabla^2 h \right) \left(\f {D} {ds} |_{s=0} \left(W_t(j(s))\right) \right)dt\\
&+\parals_t^{-1} \Rc( X(x_t)\circ dB_t, TF_t(v_2)  )W_t(v_1).
\end{equs}

With this preparation we condition the above stochastic equation with respect to $\F_t^{x_0}$ and use the fact that
$W_t(v_2)$ is the local conditional expectation of $TF_t(v_2)$ and apply Lemma 3.3.1 in \cite{Elworthy-LeJan-Li-book} to obtain:
%
 \begin{equs}
d\parals_t^{-1}  W_t^{(2)}
=&\left( - \f 12 \parals_t^{-1} {\Ric}^{\sharp}  + \parals_t^{-1}\nabla^2 h\right)  \left(W_t^{(2)} \right)dt
+\f 12\parals_t^{-1} \Theta^h (W_t(v_2)) (W_t(v_1))dt\\
&+\parals_t^{-1} \Rc( d\{x_t\}, W_t(v_2)  )W_t(v_1).
\end{equs}
We have used the fact that $\int_0^s \parals_t^{-1} \Rc( d\{x_t\}, \cdot )$ is adapted to the filtration of $\{x_\cdot\}$, see \cite[Theorem 3.1.2]{Elworthy-LeJan-Li-book}. The conclusion about the local expectation follows.
The rest follows from Corollarys 3.3.4  in \cite{Elworthy-LeJan-Li-book}.
\end{proof}

Let $p>1$ and $r$ denotes the Riemannian distance from a given point.  Observe that for any $t>0$and for any compact set $D$,
$\sup_{x\in D}\sup_{s\le t} \E(|T_xF_{s}| ^p)$ is finite if $\overline H_p\le c_1+c_2r^2$  where $c_2$, depending possibly on $t$, is sufficiently small. See (\ref{criterion}).
Below let $c>0$  be a constant and $\alpha_2>0$ be a sufficiently small constant.
\begin{lemma}\label{moments-10}
Suppose   $\rho^h\ge -K$, $\|\Theta^h\| \le ce^{C r}$, $\|\Rc\| \le ce^{C r}$, where $C\le \alpha_2$.
\begin{enumerate}
\item [(a)] then
$\sup_{x_0 \in D} \sup_{s\le t} \E \|W_t^{(2)}\|^2$ is finite for any compact subset $D$.
\item [(b)] If furthermore  $\int_0^t (\E [|T_xF_{s}|^{2p})^{\f 1p}ds<\infty$ for some $p>1$.
then $\E\left( |V_t|^2\right)<\infty$.
\end{enumerate}
\end{lemma}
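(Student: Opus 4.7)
\emph{Proof plan.} The strategy is to apply It\^o's formula to $|W_t^{(2)}|^2$, working in parallel-translated coordinates so that stochastic covariant differentiation reduces to ordinary It\^o calculus on $T_{x_0}M$, and then use Gronwall's inequality together with Lemma~\ref{lemma:strong-1}(2) to control the exponentially growing coefficients in the forcing terms.

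For part (a), the drift of $|W_t^{(2)}|^2$ has three contributions: the self-action $2\langle W_t^{(2)}, (-\tfrac{1}{2}\Ric^\sharp + (\nabla^2 h)^\sharp) W_t^{(2)}\rangle$, bounded by $K|W_t^{(2)}|^2$ since $\rho^h \ge -K$; the $\Theta^h$ cross term, bounded in absolute value by $|W_t^{(2)}|\,\|\Theta^h\|\,|W_t|^2$; and the quadratic-variation contribution of the $\Rc$-driven martingale, at most $\|\Rc\|^2 |W_t|^4\, dt$. Two applications of the arithmetic-geometric mean inequality absorb the cross term into a multiple of $|W_t^{(2)}|^2 + \|\Theta^h\|^2 |W_t|^4$. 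Since $\rho^h \ge -K$ gives the deterministic bound $|W_s|^2 \le e^{Ks}$, after standard localization Gronwall's inequality yields
\[
\sup_{s \le t} \E|W_s^{(2)}|^2 \le C(K,t) \int_0^t \E\bigl[\|\Theta^h(x_s)\|^2 + \|\Rc(x_s)\|^2\bigr]\, ds.
\]
Under the growth hypothesis, $\|\Theta^h\|^2 + \|\Rc\|^2 \le 2c^2 e^{2Cr}$, and the elementary bound $e^{2Cr} \le e^{C^2/\epsilon}\, e^{\epsilon r^2}$ (valid for any $\epsilon > 0$) combined with Lemma~\ref{lemma:strong-1}(2), which furnishes $\sup_{x_0 \in D}\sup_{s \le t} \E\, e^{\epsilon r^2(x_s)} < \infty$ whenever $\epsilon t < \alpha_1$, makes the right-hand side finite uniformly in $x_0 \in D$. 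The requirement $C \le \alpha_2$ guarantees that a valid choice of $\epsilon$ exists on the fixed interval $[0,t]$.

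For part (b), set $U_t := \tfrac{D}{ds}|_{s=0} W_t(j(s))$, so $V_t = \E[U_t \mid \F_t^{x_0}]$ and Jensen's inequality gives $\E|V_t|^2 \le \E|U_t|^2$. Inspection of the derivation in the proof of Lemma~\ref{lemma-doubly-damped} shows that $U_t$ satisfies an equation structurally identical to (\ref{Wt2}) except that $W_t(v_2)$ is replaced by $TF_t(v_2)$ in both the $\Theta^h$ drift and the $\Rc$ martingale. Repeating the It\^o/Gronwall argument of (a), the required control reduces to integrability in $s$ of $\E[\|\Theta^h(x_s)\|^2 |TF_s(v_2)|^2]$ and $\E[\|\Rc(x_s)\|^2 |TF_s(v_2)|^2]$. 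By H\"older's inequality with exponents $p$ and $p/(p-1)$,
\[
\E\bigl[\|\Rc\|^2 |TF_s|^2\bigr] \le \bigl(\E|TF_s|^{2p}\bigr)^{1/p} \bigl(\E\|\Rc\|^{2p/(p-1)}\bigr)^{(p-1)/p};
\]
the first factor integrates over $[0,t]$ by hypothesis, and the second is bounded uniformly in $s \le t$ by the same $e^{\epsilon r^2}$ argument as in (a), provided $\alpha_2$ is small enough that the exponent $2pC/(p-1)$ stays within the regime covered by Lemma~\ref{lemma:strong-1}(2) on $[0,t]$.

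The main obstacle is coordinating the smallness constants: the curvature-growth rate $C$, the AM-GM splitting parameter $\epsilon$, and the exponential-integrability threshold $\alpha_1$ for $r^2(x_s)$ must all be compatible on the prescribed interval $[0,t]$, which is precisely why $\alpha_2$ in \underline{\bf C3} is required to be ``sufficiently small''; in (b) the admissible size of $C$ shrinks further with the H\"older exponent $p$. A secondary but notationally tedious step is the localization needed to pass from the stopped SDE to an honest expectation, since the coefficients $\|\Theta^h\|$ and $\|\Rc\|$ are only assumed to be exponentially bounded rather than globally bounded.
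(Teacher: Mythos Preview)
Your proposal is correct and follows essentially the same route as the paper: It\^o's formula for $|W_t^{(2)}|^2$, AM--GM on the $\Theta^h$ cross term, the pointwise bound $|W_s|^2\le e^{Ks}$, localization plus Gronwall, and finally Lemma~\ref{lemma:strong-1}(2) to control $\E[e^{2Cr(x_s)}]$ via $e^{2Cr}\le e^{C^2/\epsilon}e^{\epsilon r^2}$; part~(b) is handled identically with $TF_t(v_2)$ in place of $W_t(v_2)$ and H\"older to split the product. Your explicit use of Jensen's inequality to pass from $U_t$ to $V_t$ in~(b) is in fact a bit cleaner than the paper, which writes the SDE with $V_t$ on the left but $TF_t(v_2)$ in the forcing terms (so really the equation for $U_t$); otherwise the arguments coincide.
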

\begin{proof}
Take unit vectors $v_1, v_2$ in $T_{x_0}M$. For part (1) we observe that
\begin{equs}
&{}d \left|W_t^{(2)}(v_1,v_2)\right|^2
=\left(-\Ric+2 \Hess h\right)(W_t^{(2)}(v_1,v_2), W_t^{(2)}(v_1,v_2))dt\\
&+\left<W_t^{(2)}(v_1,v_2),  \Theta^h\left(W_t(v_2)\right)  ( W_t(v_1)) \right\>dt\\
&
+2\left\<  \Rc( d\{x_t\}, W_t(v_2)  )W_t(v_1), W_t^{(2)}(v_1,v_2))\right\>
+\sum_{i=1}^n\left |  \Rc(u_te_i, W_t(v_2)  )W_t(v_1)\right|^2 dt.
\end{equs}
The penultimate term, which we denote by $(M_t)$, in the above equation is a local martingale.  
By taking suitable stopping times $\{\tau_k\}$, this term vanishes. Since $\rho^h\ge - K$,  $|W_t|^2\le e^{-Kt}$ for any $t\ge 0$, 
and we obtain the following estimates:
 \begin{equs}
& \E\left|W_{t\wedge \tau_k}^{(2)}(v_1,v_2)\right|^2\\
\le &
 |v_2|^2+\E \int_0^{t\wedge \tau_k}\left(\f 12 + \rho^h(x_s)\right)\left|W_{s}^{(2)}(v_1,v_2)\right|^2ds
\\ &+\E \int_0^{t\wedge \tau_k} \f 12\left|  \Theta^h(W_s(v_2))W_s(v_1)\right|^2ds
 +\E  \int_0^{t\wedge \tau_k}\|\Rc(\cdot, W_s(v_2))W_s(v_1)\|^2 ds\\
\le &
 |v_2|^2+\E \int_0^{t\wedge \tau_k}\left(\f 12 +K\right)\left|W_{s}^{(2)}(v_1,v_2)\right|^2ds\\
 &+
\E \int_0^{t\wedge \tau_k} \f 12  \|\Theta^h\|^2_{x_s} e^{2Ks} |v_1|^2|v_2|^2\;ds+\E  \int_0^{t\wedge \tau_k}
 \| \Rc\|^2_{x_s}  e^{2Ks} |v_1|^2|v_2|^2\; ds.
  \end{equs}
By Lemma \ref{lemma:strong-1}, $\sup_{s\le t}\E e^{\alpha_1 d^2(x_s, x_0)}$ is uniformly bounded in $x_0$.
Take $\alpha_2=\f 12 \alpha_1$ to see both $\E [\|\Theta^h\|_{x_s}^2  ]$ and $ \E[\| \Rc\|_{x_s}^2  ]$ are finite.
 We apply Gronwall's lemma  followed by taking $k\to \infty$ and using Fatou's lemma to see that
$ \E\sup_{ r\le t} \left|W_r^{(2)}(v_1,v_2)\right|^2
\le C(t) $ where $C(t)$ is a constant depending on $t$, and is locally uniform w.r.t. $x_0$.

For part (2) we use the following equation from the proof of the last lemma.
\begin{equs}
 {d}  \parals_t^{-1} V_t
= &\left(- \f 12\parals_t^{-1}  {\Ric}^{\sharp}+\nabla^2 h \right) \left(V_t \right)dt
+ \f 12\parals_t^{-1} \Theta^h( TF_t(v_2))(W_t(v_1))dt\\
&+\parals_t^{-1} \Rc( X(x_t)\circ dB_t, TF_t(v_2)  )W_t(v_1).
\end{equs}
We may proceed as before, but replacing $W_t(v_2)$ be $TF_t(v_2)$. We finally have to take care of the following terms
$\E[\|\Theta^h\|^2 |TF_t|^2]\le  \left(\E \|\Theta^h\|^{2q}\right)^{\f 1q} ( \E  |TF_t|^{2p})^{\f 1p}$.
If $2qC<\alpha_1$, $  \E \|\Theta^h\|^{2q}$ is finite. The term involving $\Rc$ can be treated similarly. 
 \end{proof}

The following proposition follows immediately from the two pervious lemmas. 
Denote by $c, \delta$ positive constants,  $K$ a constant, and $\alpha_2$ a sufficiently small constant.
\begin{proposition}\label{second-diff-formula}
Suppose that \begin{enumerate}
\item [(a)]  Suppose   $\rho^h\ge -K$, $\|\Theta^h\| \le ce^{C r}$, $\|\Rc\| \le ce^{C r}$, where $C\le \alpha_2$.
\item [(b)]   the gradient SDE  is strongly 1-complete and 
 $\int_0^t (\E [|T_xF_{s}|^{2p})^{\f 1p}ds<\infty$ for any $x$ and for some $p>1$.
\end{enumerate}
 Then  for all $f\in BC^2$,
\begin{equation}\label{second-diff-5}
\Hess (P^h_tf)(v_2, v_1) =\E \left[\nabla df(W_t(v_2), W_t(v_1))\right]+\E \left[df (W_t^{(2)}(v_1, v_2))\right].
\end{equation}
\end{proposition}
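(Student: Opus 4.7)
The plan is to assemble the proposition from the three preceding lemmas: Lemma~\ref{second-diff-lemma-1} provides the structural Hessian formula with a conditional-expectation correction $V_t$, Lemma~\ref{lemma-doubly-damped} identifies $V_t$ with $W_t^{(2)}(v_1,v_2)$ once the relevant derivative is integrable, and Lemma~\ref{moments-10} supplies the moment bounds that make everything integrable. Hypothesis (a) contributes $\rho^h \ge -K$ (so $\|W_t\|^2 \le e^{Kt}$) together with the exponential growth bounds on $\|\Theta^h\|$ and $\|\Rc\|$, which, paired against the Gaussian tails of $r(x_t)$ coming from Lemma~\ref{lemma:strong-1}(2), are exactly what Lemma~\ref{moments-10} needs when $C \le \alpha_2$. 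Hypothesis (b) delivers strong 1-completeness and the $L^{2p}$ moment of $TF_t$ that will propagate into integrability of the unconditional derivative $\frac{D}{ds}|_{s=0} W_t(j(s))$.

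First I would verify hypotheses (a)--(c) of Lemma~\ref{second-diff-lemma-1}. Strong 1-completeness is immediate from (b). The integrability $\E|T_{\gamma(s)} F_t| < \infty$ is a direct consequence of the $L^{2p}$ assumption in (b) and Jensen; the locally uniform nature of these moments allows me to use this at every $s$ in a small interval around $0$. The integrability $\E |\nabla_{TF_t(\gamma(s))} W_t|_{\gamma(s)} < \infty$ is exactly the $L^1$ integrability of $\frac{D}{ds} W_t(\gamma(s))$, which is furnished by Lemma~\ref{moments-10}(b) at each base point.

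Second I would discharge the $L^1$ continuity condition (c) of Lemma~\ref{second-diff-lemma-1}. Applying Lemma~\ref{lemma-doubly-damped} at each base point $\gamma(s)$ (permissible thanks to the integrability just established) identifies $\E\{\frac{D}{ds} W_t(\gamma(s)) \mid \F_t^{\gamma(s)}\}$ with $W_t^{(2)}(\dot\gamma(s), j(s))$ built along the $h$-Brownian motion issuing from $\gamma(s)$. Strong 1-completeness makes the whole family of paths, together with $\parals_t$, $W_t$, and the driving terms of the doubly damped equation, jointly continuous in $(s,\omega)$ almost surely; the locally uniform $L^2$ bound on $\|W_t^{(2)}\|$ from Lemma~\ref{moments-10}(a) then upgrades pointwise convergence to $L^1$ convergence by uniform integrability.

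Once (a)--(c) of Lemma~\ref{second-diff-lemma-1} are in place, that lemma gives
\[
\Hess(P_t^h f)(v_2, v_1) = \E\left[\nabla df(W_t(v_2), W_t(v_1))\right] + \E\left[df(V_t)\right],
\]
with $V_t = \E\{\frac{D}{ds}|_{s=0} W_t(j(s)) \mid \F_t^{x_0}\}$. Since $\frac{D}{ds}|_{s=0} W_t(j(s))$ is integrable (Lemma~\ref{moments-10}(b)), the ``integrable'' clause of Lemma~\ref{lemma-doubly-damped} applies and gives $V_t = W_t^{(2)}(v_1, v_2)$, yielding (\ref{second-diff-5}). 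The step I expect to be most delicate is the $L^1$ continuity in $s$: it requires propagating continuous dependence on the initial point through three layers --- the gradient SDE, the pathwise ODE defining $W_t$, and the conditioning onto the varying $\sigma$-algebra $\F_t^{\gamma(s)}$ --- and the small-exponent hypothesis $C \le \alpha_2$ in (a) is precisely what guarantees the uniform integrability needed for that passage.
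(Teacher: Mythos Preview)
Your proposal is correct and matches the paper's own approach: the paper simply states that the proposition ``follows immediately from the two previous lemmas,'' and what you have done is spell out exactly how Lemmas~\ref{second-diff-lemma-1}, \ref{lemma-doubly-damped}, and \ref{moments-10} combine to verify the hypotheses and identify $V_t$ with $W_t^{(2)}(v_1,v_2)$. Your treatment of the $L^1$-continuity condition (c) via the locally uniform $L^2$ bound from Lemma~\ref{moments-10}(a) is in fact more explicit than anything the paper writes down.
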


An immediate consequence are the following primitive $L_\infty$ estimates:
\begin{corollary} Suppose the conclusions of Proposition \ref{second-diff-formula}.
Let $f\in BC^2$. Then
$$\left| \Hess (P^h_tf)(v_2, v_1)\right|
\le |\nabla df|_\infty  \E \left[e^{-\rho^h(x_s)ds} \right] +|df|_\infty  \E \left|W_t^{(2)}(v_1, v_2)\right|.$$
\end{corollary}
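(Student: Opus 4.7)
The plan is to apply the formula from Proposition \ref{second-diff-formula} and pass to absolute values, using the respective pointwise bounds $|\nabla df|_\infty$ and $|df|_\infty$ on the tensor fields, together with the standard pathwise control on the damped stochastic parallel translation $W_t$. Since the hypotheses of Proposition \ref{second-diff-formula} are in force, we start from the identity
\begin{equation*}
\Hess (P^h_tf)(v_2, v_1) =\E \left[\nabla df(W_t(v_2), W_t(v_1))\right]+\E \left[df (W_t^{(2)}(v_1, v_2))\right].
\end{equation*}

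The main point is to control $|W_t(v)|$. Since $\parals_t^{-1} W_t$ satisfies the linear pathwise ODE driven by $-\tfrac{1}{2}\Ric^\sharp + (\nabla^2 h)^\sharp$, the quantity $|W_t(v)|^2$ is differentiable in $t$ with
\begin{equation*}
\frac{d}{dt}|W_t(v)|^2 = -(\Ric - 2\Hess h)(W_t(v), W_t(v)) \leq -\rho^h(x_t) |W_t(v)|^2,
\end{equation*}
so that by Gronwall, for unit $v$,
\begin{equation*}
|W_t(v)|^2 \leq e^{-\int_0^t \rho^h(x_s)\, ds}.
\end{equation*}
This is precisely the pointwise bound recalled earlier in the excerpt.

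Now take absolute values in the Hessian formula and apply the triangle inequality. For the first term, I would bound
\begin{equation*}
\left|\nabla df(W_t(v_2), W_t(v_1))\right| \leq |\nabla df|_\infty\, |W_t(v_2)|\,|W_t(v_1)| \leq |\nabla df|_\infty\, e^{-\int_0^t \rho^h(x_s)\,ds},
\end{equation*}
using the Cauchy--Schwarz type inequality for the bilinear form $\nabla df$ and the bound above with $v_1,v_2$ of unit length. For the second term, the simpler estimate
\begin{equation*}
\left|df(W_t^{(2)}(v_1,v_2))\right| \leq |df|_\infty\, |W_t^{(2)}(v_1,v_2)|
\end{equation*}
applies. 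Taking expectations and combining the two bounds yields the claim. There is no real obstacle here: all the work has been done in Proposition \ref{second-diff-formula} (which justifies the Hessian formula) and in Lemma \ref{moments-10} (which guarantees the integrability of $W_t^{(2)}$); the corollary is essentially a direct readout of the formula with $\sup$-norm estimates on $f$ and its derivatives.
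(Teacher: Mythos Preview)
Your proof is correct and is exactly the intended argument: the paper states this corollary without proof, calling it an ``immediate consequence'' of the formula in Proposition \ref{second-diff-formula}, and your write-up fills in precisely the expected steps (pointwise bound $|W_t(v)|^2\le e^{-\int_0^t\rho^h(x_s)\,ds}$ for unit $v$, then $\sup$-norm control on $\nabla df$ and $df$). Note only that the estimate as stated is implicitly for unit vectors $v_1,v_2$, and that the exponent in the paper's display should read $e^{-\int_0^t\rho^h(x_s)\,ds}$, as you have it.
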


\subsection{Exponential integrability}
 Let us fix a $h$-Brownian motion, i.e. fix a probability space and its sample paths,
this could be the solution of a gradient SDE, or the projection of the canonical horizontal SDE, or any other pathwise representation.
We define $(W_t, W_t^{(2)})$ to be the solution to the following system of equations along the chosen sample paths 
$\{x_\cdot(\omega)\}$, so we have a triple of stochastic processes $( x_t, W_t, W_t^{(2)})$.
\begin{equation}\label{system-equs}
\begin{aligned}
\f{D} {dt }{W}_t=&-\f 12 {\Ric}_{x_t}^{\#}({W}_t)-(\nabla^2h)^\sharp_{x_t}(W_t), \\
 W_0=&v_1
\\
W_t \;d \left( W_t^{-1}W_t^{(2)}\right)
=&\f 12\Theta^h (W_t(v_1)(W_t(v_2))dt+ \Rc( \{dx_t\}, W_t(v_2)  )W_t(v_1), \\
 W_0^{(2)}=&v_2.
 \end{aligned}
\end{equation}

We next investigate the exponential integrability of $|W_t^{(2)}|$. In this, we assume that the curvature operator is bounded for 
the simplicity of the exposition. In the lemma below, $\|\Rc\|_{\infty}$ and $T$ are positive constants.

\begin{lemma}\label{exponential-estimates}
Suppose condition $\underline{\bf C2}$. Set $C_1(T,0)=1$,
$$C_1(T,K)= \sup_{0< s\le 3KT}\f 1{s}{(e^s-1)}, \quad \alpha_2(T, K, \|\Rc\|_\infty)=\f  1{49 n^2\|\Rc\|_\infty^2 C_1(T,K)}.$$
Then 
there exists a universal constant $c$ such that for $v_1, v_2\in ST_{x_0}M$,
and for any $\alpha \le  \alpha_2(T, K, \|\Rc\|_\infty)$, 
$$\begin{aligned} \E \exp\left(\alpha  \gamma |W_t^{(2)}(v_1, v_2)|^2\right)
& \le c e^{2\alpha \gamma} 
\sqrt{\E \exp\left(4t \;\gamma \alpha   \int_0^t e^{3 Ks} \|\Theta^h\|^2_{x_s} ds\right)}
\\
&\le c e^{ \f  {2 \gamma}{49 n^2\|\Rc\|_\infty^2}} \sqrt{ \E \exp\left(\f{4t\gamma}{49n^2\|\Rc\|_\infty^2C_1(t,K)}  \int_0^t e^{3 Ks} \|\Theta^h\|^2_{x_s} ds\right)}.\end{aligned}$$
\end{lemma}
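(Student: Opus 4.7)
The plan is to solve the doubly damped equation \eqref{Wt2} by variation of constants and then estimate the drift and martingale contributions separately. Under \underline{\bf C2}, the condition $|\rho^h|\le K$ gives, along the fixed $h$-Brownian path, the two-sided operator bounds $\|W_s\|\le e^{Ks/2}$ and $\|W_s^{-1}\|\le e^{Ks/2}$. Using $W_\cdot$ as propagator for the linear part of \eqref{Wt2}, I would write
\[
W_t^{(2)}(v_1,v_2) = W_t v_2 + W_t D_t + W_t M_t,
\]
where
\[
D_t := \tfrac12\int_0^t W_s^{-1}\,\Theta^h(W_s v_2)(W_s v_1)\,ds, \qquad M_t := \int_0^t W_s^{-1}\Rc(d\{x_s\}, W_s v_2)W_s v_1.
\]
The key point is that $W_s^{-1}$ is $\mathcal F_s^{x_\cdot}$-adapted, so $M_t$ is a genuine continuous vector-valued local martingale in $T_{x_0}M$.

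For the Bochner part, the pointwise bound $\|W_s^{-1}\|\,|W_s v_1|\,|W_s v_2|\le e^{3Ks/2}$ and Cauchy--Schwarz in the time integral yield the deterministic estimate
\[
|W_t D_t|^2 \le \tfrac{t}{4}\, e^{Kt}\int_0^t e^{3Ks}\,\|\Theta^h\|^2_{x_s}\, ds,
\]
which already exhibits the weight $e^{3Ks}\|\Theta^h\|^2$ appearing under the square root of the stated bound. For the martingale, the Hilbert--Schmidt convention in \eqref{norms} gives $\|\Rc(\cdot, W_s v_2)W_s v_1\|\le \|\Rc\|_\infty |W_s v_1|\,|W_s v_2|$, so
\[
\langle M, M\rangle_t \le \|\Rc\|_\infty^2\int_0^t e^{3Ks}\,ds \le \|\Rc\|_\infty^2\, t\, C_1(t, K),
\]
which is a \emph{deterministic} bound. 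A standard exponential martingale inequality applied componentwise to $M_t\in\R^n$, combined with H\"older over the $n$ components, then produces a universal constant $c_0$ and a threshold of the form $\alpha_2 = 1/(49 n^2 \|\Rc\|_\infty^2 C_1(T,K))$ such that $\E\exp(\lambda\, e^{Kt}|M_t|^2)\le c_0$ for all $\lambda\le \alpha_2$.

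To finish, I would use $|a+b+c|^2 \le 3(|a|^2 + |b|^2 + |c|^2)$, $|W_t v_2|^2\le e^{Kt}$, and Cauchy--Schwarz on the expectation:
\[
\E\exp(\alpha\gamma|W_t^{(2)}|^2) \le e^{3\alpha\gamma e^{Kt}}\sqrt{\E\exp(6\alpha\gamma\,|W_t D_t|^2)}\,\sqrt{\E\exp(6\alpha\gamma e^{Kt}|M_t|^2)}.
\]
For $\alpha\le\alpha_2$ the rightmost square root is a constant by the martingale step, and substituting the drift bound into the middle factor gives an exponential of $\tfrac{3t\alpha\gamma e^{Kt}}{2}\int_0^t e^{3Ks}\|\Theta^h\|^2\,ds$. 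Absorbing the numerical prefactors (bounded on $[0,T]$ by the restriction $\alpha\le\alpha_2$) into a new universal $c$, and enlarging the coefficient $\tfrac{3e^{Kt}}{2}$ to $4$, produces the stated estimate.

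The main obstacle is the martingale step: obtaining the sharp exponential moment of $|M_t|^2$ for the vector-valued $M_t$ with the explicit dimensional constant $n^2$. While each scalar component admits a clean John--Nirenberg-type exponential bound controlled by its quadratic variation, the full vector norm $|M_t|^2 = \sum_i (M_t^i)^2$ requires expanding, applying H\"older in the $n$ components with the shared quadratic variation budget $\|\Rc\|_\infty^2 t C_1(t,K)$, and tracking the BDG numerical constant. This is exactly where the factor $49 n^2$ in the definition of $\alpha_2(T, K, \|\Rc\|_\infty)$ is forced.
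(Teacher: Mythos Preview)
Your approach is essentially the paper's own: solve \eqref{Wt2} by variation of constants using $W_t$ as propagator, bound the drift term via Cauchy--Schwarz in time, show the martingale term has a \emph{deterministic} quadratic-variation bound of order $\|\Rc\|_\infty^2\, t\, C_1(t,K)$, and combine via Cauchy--Schwarz on the expectation. The paper works with $W_t^{-1}W_t^{(2)}=v_2+A_t+M_t$ and invokes the norm equivalence $e^{-Kt}|W_t^{-1}W_t^{(2)}|\le |W_t^{(2)}|\le e^{Kt}|W_t^{-1}W_t^{(2)}|$ only at the very end, whereas you carry $W_t$ through; this accounts for the extra $e^{Kt}$ factors in your intermediate bounds but is harmless.

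One sharpening: for the martingale step the paper does not appeal to BDG or a John--Nirenberg inequality. It uses the Dambis--Dubins--Schwarz representation: each scalar component $M_t^j$ is a time-changed one-dimensional Brownian motion $\beta^j_{f_t^j}$ with $f_t^j$ bounded by the deterministic quantity $n\|\Rc\|_\infty^2\int_0^t e^{3Ks}\,ds$, so $\E e^{8\alpha n (M_t^j)^2}<\infty$ reduces to the elementary Gaussian fact $\E e^{a B_f^2}<\infty$ iff $af<\tfrac12$. Then $e^{8\alpha|M_t|^2}\le c(n)\sum_j e^{8\alpha n(M_t^j)^2}$, and the threshold $8\alpha n\cdot n\|\Rc\|_\infty^2 t C_1(t,K)<\tfrac12$ is precisely where the factor $48n^2$ (rounded to $49n^2$) in $\alpha_2$ comes from. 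If you replace your ``John--Nirenberg / BDG'' reference by this explicit DDS argument, the constant falls out cleanly and the proof is complete.
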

 \begin{proof}
Take $v_1, v_2\in ST_xM$, and write $W_t^{(2)}
=W_t^{(2)} (v_1, v_2)$ for simplicity.  We first observe that
 $$W_t  d\left( W_t^{-1}W_t^{(2)}\right)=D W_t^{(2)}+
\left(- \f 12 {\Ric}^{\sharp} +(\nabla^2 h)^\sharp\right)   \left(W_t^{(2)} \right) dt.$$
By the definition of $W_t^{(2)}$, we see that
\begin{equs}
\begin{aligned}&W_t^{-1}W_t^{(2)}-v_2\\
=&\f 12 \int_0^t W_{s}^{-1}  (\Theta^h(W_s(v_1))(W_s(v_2)) )ds+\int_0^t W_{s}^{-1} \Big(\Rc( u_s dB_s, W_s(v_2)  )W_s(v_1)\Big).
\end{aligned}
\end{equs}
Denote by $A_t$ and $M_t$ respectively the first and the second term on the right hand side. If $\alpha$ is a positive number, we use elementary and Cauchy-Schwartz inequalities to obtain the following brutal estimate:
$$\begin{aligned}\E \exp\left(\alpha |W_t^{(2)}|^2\right)
&=\E\exp \left(\alpha ( v_2+ A_t+M_t)\right)^2\\
&\le  e^{2\alpha |v_2|^2} \E\exp \left(4\alpha  A_t^2+4\alpha M_t^2\right)
\le  e^{2\alpha |v_2|^2} \sqrt{\E e^{ 8 \alpha A_t^2}} \sqrt{\E e^{8 \alpha M_t^2}}.
\end{aligned}$$

Let us consider the $j$th component of the $\R^n$-valued local martingale $(M_t)$, where $\R^n$ is identified with $T_{x_0}M$,  
$$M_t^j:= \int_0^t \left\<W_{s}^{-1} \Big(\Rc( u_s dB_s, W_s(v_2)  )W_s(v_1)\Big), e_j\right\>.$$ 
This has quadratic variation 
$$f^j_t=\sum_{i=1}^n\int_0^t \left\<W_{s}^{-1} \Big(\Rc( u_s e_i, W_s(v_2)  )W_s(v_1), e_j\right \>^2 ds.$$
Since $\Rc$ is bounded, $\rho^h$ is bounded, the quadratic variations are uniformly bounded by a constant,
$$f_t^j\le n\|\Rc\|_\infty^2 \int_0^t e^{3Ks} ds \le  3tn\|\Rc\|_\infty^2  \f {e^{3Kt}-1}{3Kt}.$$
Thus $M_t^j$ is a time changed Brownian motion of the form $\beta^j_{f_t^j}$ where each $\beta^j$ is a one dimensional Brownian motion.
Furthermore for a universal constant $c(n)$ depending on $n$,
  $$e^{8\alpha |M_t|^2}=e^{8\alpha \sum_{j=1}^n (M_t^j)^2}\le c(n)  \sum_{j=1}^n e^{8\alpha n (M_t^j)^2}.$$
  In particular, 
  $\E e^{8\alpha |M_t|^2}$ is bounded by a universal constant if  $8\alpha  nf_t^j < \f 12$. Letting $C_1(t, K)= \sup_{0< t\le T} \f {e^{3Kt}-1}{3Kt}$,  
  it is sufficient to take $\alpha< \f  1{48 n^2\|\Rc\|_\infty^2 } \f 1 {C_1(t, K)}$.
Since $\alpha_2(t)=\f  1{49 n^2\|\Rc\|_\infty^2 C_1(t,K)}$,  for any $\alpha\le \alpha(t)$,
 $\E e^{8\alpha |M_t|^2}$  is bounded by a universal constant.

We consider the first term $A_t$, wich is easy to estimate. In fact,  $$|A_t|^2 \le  \f t 2\int_0^t \left| W_{s}^{-1}  (\Theta^h(W_s(v_1))(W_s(v_2)) )\right|^2 ds \le \f t 2\int_0^t e^{3 Ks} \|\Theta^h\|^2_{x_s} ds.$$
By part (2) in Lemma \ref{lemma:strong-1}, it is 
exponentially integrable to any given exponent if  $\|\Theta^h\|^2\le c+\delta  r^2$ where $\delta$ is sufficiently small (this is part of the assumption \underline{\bf C2}).
Putting everything together we see that there exists a universal constant $c$ such that for $\alpha\le\alpha_2(t)$, $\gamma\in (0,1)$,
$$\E \exp\left(\alpha\gamma  |W_t^{(2)}|^2\right) \le c e^{2\gamma\alpha}
\sqrt{ \exp\left( 4t\gamma \alpha \int_0^t e^{3 Ks}\E \|\Theta^h\|^2_{x_s} ds \right)}.$$
For the maximum value of $\alpha$,
$$\begin{aligned}
\E \left[e^{8\gamma \alpha_2 ( t ,K, \|\Rc\|_\infty) |A_t|^2}\right]
&\le \E \exp\left(4t \;\gamma \alpha_2( t ,K, \|\Rc\|_\infty)   \int_0^t e^{3 Ks} \|\Theta^h\|^2_{x_s} ds\right)\\
&\le \E \exp\left(\f{4t\gamma}{49n^2\|\Rc\|_\infty^2C_1(t,K)}  \int_0^t e^{3 Ks} \|\Theta^h\|^2_{x_s} ds\right).
\end{aligned}$$
Putting everything together we have, for any $\gamma \in (0,1)$, 
$$\E \exp\left(\alpha \gamma |W_t^{(2)}|^2\right) \le c e^{2\gamma\alpha} \sqrt{ \E \exp\left(\f{4t\gamma}{49n^2\|\Rc\|_\infty^2C_1(t,K)}  \int_0^t e^{3 Ks} \|\Theta^h\|^2_{x_s} ds\right)}.$$

 To conclude we use the equivalence of the two norms $ |W_t^{-1}W_t^{(2)}|$ and 
$ |W_t^{(2)}| $, which follows from the assumption that $\rho^h$ is bounded and hence  $e^{-Kt}  |W_t^{-1}W_t^{(2)}|\le |W_t^{(2)}| \le e^{Kt} |W_t^{-1}W_t^{(2)}| $. \end{proof}
 
For further estimates we use the following basic lemma \cite[Lemma~6.45]{Stroock2000}).
\begin{lemma}\label{Stroock's-Lemma}
Let $\phi$ be a non-negative measurable function on $\Omega$ s.t. $\E \phi\not =0$. If $\Psi$ is a measurable function on $\Omega$ such that $\phi \Psi$, $e^{-\Psi}$  and $e^\Psi$ are integrable then
\begin{equation}\label{Stroock}
-\mathbb{E}\left[\phi \log \f \phi {\E \phi} \right]+(\E \phi) \log \mathbb{E}(e^{-\Psi})
\le \mathbb{E}\left[\phi \Psi\right]
 \leq \mathbb{E}\left[\phi \log \f \phi {\E \phi} \right] +(\E \phi) \log \mathbb{E}(e^\Psi).
\end{equation}
\end{lemma}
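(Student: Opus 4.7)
The plan is to reduce to a probability density $g = \phi/\E\phi$ and then apply Gibbs' inequality (nonnegativity of relative entropy) twice, once with $e^{\Psi}$ and once with $e^{-\Psi}$ serving as the tilting potential. Since $\E\phi > 0$ by assumption, setting $g := \phi/\E\phi$ produces a probability density on $(\Omega,\mathcal F,\PP)$. Dividing the claimed two-sided inequality through by $\E\phi$ and using $\E[\phi\log(\phi/\E\phi)] = (\E\phi)\,\E[g\log g]$ together with $\E[\phi\Psi] = (\E\phi)\,\E[g\Psi]$ reduces the statement to the scaled form
\begin{equation*}
-\E[g \log g] - \log \E[e^{-\Psi}] \;\le\; \E[g \Psi] \;\le\; \E[g \log g] + \log \E[e^{\Psi}]
\end{equation*}
valid for any probability density $g$ on $(\Omega,\mathcal F,\PP)$.

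The engine is Gibbs' inequality: for any two probability densities $p, q$ on $(\Omega,\mathcal F,\PP)$,
\begin{equation*}
\int p \log(p/q)\, d\PP \ge 0,
\end{equation*}
which follows immediately from the elementary bound $-\log x \ge 1 - x$ (equivalently, Jensen applied to the convex function $-\log$). For the right-hand inequality I would take $p = g$ and the exponentially tilted reference $q = e^{\Psi}/\E[e^{\Psi}]$, legitimate because $\E[e^{\Psi}] < \infty$ by hypothesis. Expanding $\log(p/q) = \log g - \Psi + \log \E[e^{\Psi}]$ and integrating against $g$ yields $\E[g\log g] - \E[g\Psi] + \log\E[e^{\Psi}] \ge 0$, which is exactly the upper bound. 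For the left-hand inequality the symmetric choice $q = e^{-\Psi}/\E[e^{-\Psi}]$ produces $\E[g\log g] + \E[g\Psi] + \log\E[e^{-\Psi}] \ge 0$. Multiplying both scaled inequalities through by $\E\phi$ restores the original two-sided statement.

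The only bookkeeping is integrability, and it is essentially automatic: $\E[g\Psi]$ is finite by the hypothesis on $\phi\Psi$; $\E[g\log g]$ lies in $[0,\infty]$ by convexity of $x\log x$ and Jensen, and its finiteness in the relevant direction is forced by the pointwise Young-type bound $g\Psi \le g\log g + e^{\Psi} - 1$ combined with the integrability of $\phi\Psi$ and $e^{\Psi}$. There is no real obstacle in the argument: once the two reference densities $e^{\pm\Psi}/\E[e^{\pm\Psi}]$ are shown to be well-defined, each bound is produced by a one-line substitution into Gibbs' inequality, and no further analytical input is required.
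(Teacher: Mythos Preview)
Your proof is correct and takes essentially the same approach as the paper. Both arguments normalize $\phi$ to a probability density $g=\phi/\E\phi$ and then invoke Jensen's inequality: the paper applies concavity of $\log$ directly to the random variable $e^{\Psi}\phi^{-1}$ under the measure $g\,d\PP$, while you phrase the identical step as nonnegativity of the relative entropy of $g$ with respect to the tilted density $e^{\Psi}/\E[e^{\Psi}]$; unwinding either computation yields the same line $\E[g\log g]-\E[g\Psi]+\log\E[e^{\Psi}]\ge 0$, and the lower bound follows in both cases by replacing $\Psi$ with $-\Psi$.
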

\begin{proof}
The second inequality follows from Jensen's inequality,
\begin{equs}
\log \E [e^\Psi] &=\log \left[ \f {\E[  (e^\Psi \phi^{-1} \phi) ]}{\E \phi} \E \phi\right]
\ge \f { \E \left[ \phi \log (e^\Psi \phi^{-1} )\right] }{\E \phi}+ \log (\E \phi)\\
&= \f { \E \left[  \phi(\Psi -\log  \phi)  \right] }{\E \phi}+ \log (\E \phi).
\end{equs}
The first inequality follows from taking $-\Psi$ in place of $\Psi$.
\end{proof}


\begin{lemma}\label{estimate-10}
Suppose  \underline{\bf C2}.
Let $f$ be a non-negative bounded and Borel measurable function normalised such that $P_t^hf(x_0)=1$. Let $v_1, v_2\in ST_{x_0}M$ and $(x_t)$ an $h$-Brownian motion.
 \begin{enumerate}
\item Let $c_1(n)=14\sqrt 2 n{ \sqrt {C_1(t/2,K)}} \|\Rc\|_\infty $. Then there exist  numbers $c_2$ and $c_3$, which depend on $n, K, \|\Rc\|_\infty, \Theta$ and are given explicitly in the proof, s.t.
$$\begin{aligned}
&\left| \E \left (  {f(x_t)}\cdot  \f 2 t \int_0^{\f t 2} \< u_s dB_s, W_s^{(2)}(v_1, v_2)\>\right)\right|\\
&\le  \f 1 { \sqrt t} \left( c_1\E\left [ (f\log f)(x_t)\right] + c_2+c_3A(t/2, K, \|\Rc\|_\infty, \|\Theta^h\|)
\right),\end{aligned}$$
where the constant $A(t/2, \Theta^h)$ is finite and is given by $$A(t/2, K, \|\Rc\|_\infty, \|\Theta^h\|)=\log \left(\E \exp\left(2 \alpha_2(t/2)  \int_0^{\f t 2} e^{3 Ks} \|\Theta^h\|^2_{x_s} ds\right)\right).$$
\item For a number $\delta_0>0$ set  $C_2(t,K)={(4+\delta_0)}\sup_{0<s\le t} \left(\f {e^{sK}-e^{\f {sK} 2}} {Kt}\right)$. There exists a number $c_0(\delta)$ depending on $\delta_0$ such that for $N_t$ defined in Definition~\ref{def-nt},
$$\left| \E \left (  {f(x_t)}\cdot  N_{t}\right)\right|
\le\f 1 {t} C_2(t,K)\E (f\log f)(x_t)+\f 1 {t} C_2(t,K) c(\delta_0).$$\end{enumerate}
\end{lemma}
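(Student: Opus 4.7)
The plan is to apply Stroock's variational inequality (Lemma~\ref{Stroock's-Lemma}) in both parts. Taking $\phi = f(x_t)$, so that $\E\phi = P_t^h f(x_0) = 1$ and $\E\bigl[\phi\log(\phi/\E\phi)\bigr] = \E[(f\log f)(x_t)]$, and $\Psi = \pm\mu X$ for the target random variable $X$, yields
\begin{equation*}
\bigl|\mu\,\E[f(x_t)\,X]\bigr| \le \E[(f\log f)(x_t)] + \max\bigl(\log\E[e^{\mu X}],\ \log\E[e^{-\mu X}]\bigr).
\end{equation*}
The problem thus reduces to bounding an exponential moment of $X$ and then choosing $\mu$ optimally to produce the advertised $t$-dependence.

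For Part~(1), I would take $X = Y := \f{2}{t}\int_0^{t/2}\<u_s\,dB_s,\,W_s^{(2)}(v_1,v_2)\>$, a martingale with bracket $\f{4}{t^2}\int_0^{t/2}|W_s^{(2)}|^2\,ds$. The standard exponential-martingale Cauchy--Schwarz estimate gives $\E[e^{\mu Y}]^2 \le \E\exp\bigl(\f{4\mu^2}{t^2}\int_0^{t/2}|W_s^{(2)}|^2\,ds\bigr)$, and Jensen applied to the time average pushes the expectation under the integral, leaving $\f{2}{t}\int_0^{t/2}\E\exp\bigl(\f{2\mu^2}{t}|W_s^{(2)}|^2\bigr)\,ds$. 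Choosing $\mu^2 = t\,\alpha_2(t/2,K,\|\Rc\|_\infty)/8$ makes the exponent equal $\alpha_2(t/2)/4$, and Lemma~\ref{exponential-estimates} then bounds each integrand by a universal constant times $\sqrt{\exp(A(t/2,K,\|\Rc\|_\infty,\|\Theta^h\|))}$. Taking logarithms produces the additive constants $c_2, c_3$; dividing by $\mu = \sqrt{t/(8\cdot 49 n^2\|\Rc\|_\infty^2 C_1(t/2,K))}$ yields the coefficient $c_1(n)/\sqrt{t}$ exactly as stated.

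For Part~(2), write $M_1 = \int_0^{t/2}\<u_s dB_s,W_s(v_2)\>$ and $M_2 = \int_{t/2}^t\<u_s dB_s,W_s(v_1)\>$, so that $N_t = \f{4}{t^2}M_1 M_2$. The key observation is that because $M_1$ is $\F_{t/2}$-measurable, It\^o's product formula gives $M_1 M_2 = \int_{t/2}^t M_1\,\<u_s dB_s, W_s(v_1)\>$, a single martingale over $[t/2,t]$ with bracket $M_1^2\int_{t/2}^t|W_s(v_1)|^2 ds$. Conditioning on $\F_{t/2}$ and using $|W_s(v_1)|^2 \le e^{Ks}$ (from $\rho^h \ge -K$), the exponential-martingale bound yields $\E[e^{\mu N_t}\mid\F_{t/2}] \le \exp\bigl(\f{16\mu^2 G_t}{t^4}M_1^2\bigr)$ with $G_t = (e^{Kt}-e^{Kt/2})/K$. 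Taking unconditional expectation reduces the problem to estimating $\E[e^{\alpha M_1^2}]$ with $\alpha = 16\mu^2 G_t/t^4$. Writing $e^{\alpha x^2} = \int_{\R}\f{dz}{\sqrt{2\pi}}\,e^{-z^2/2 + xz\sqrt{2\alpha}}$ and applying the exponential-martingale bound for $M_1$ (whose bracket is at most $G_t' := (e^{Kt/2}-1)/K$) gives $\E[e^{\alpha M_1^2}] \le (1-2\alpha G_t')^{-1/2}$. Choosing $\mu^2 = t^4/(64\,G_t G_t')$ makes $2\alpha G_t' = 1/2$, so $\log \E[e^{\mu N_t}] \le \tfrac{1}{2}\log 2$ and $1/\mu \le C_2(t,K)/t$, using $\sqrt{G_t G_t'} \le \max(G_t, G_t') \le t\sup_{s\le t}(e^{sK}-e^{sK/2})/(Kt)$; the remaining slack in the constant is absorbed into $\delta_0$.

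The main technical obstacle is Part~(2): controlling the exponential moment of the \emph{product} of two martingales. The It\^o representation $M_1 M_2 = \int M_1\,dM_2$ circumvents the lack of independence and permits nesting a Gaussian-type bound for $M_1^2$ inside a conditional exponential-martingale bound for $M_2$. Part~(1) is more routine, but depends essentially on the exponential integrability of $|W_s^{(2)}|^2$ supplied by Lemma~\ref{exponential-estimates}; Condition \underline{\bf C2} enters precisely here through the requirement $\|\Theta^h\|^2 \le c + \delta r^2$ with $\delta$ small enough that $\E\exp(c_0\int_0^t e^{3Ks}\|\Theta^h\|^2_{x_s}\,ds)$ is finite, using the exponential moment bound on $d^2(x_s,x_0)$ from Lemma~\ref{lemma:strong-1}.
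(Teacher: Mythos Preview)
Your Part~(1) is exactly the paper's argument: Stroock's inequality with $\phi=f(x_t)$, the exponential-martingale/Cauchy--Schwarz bound $\E e^{cM}\le(\E e^{2c^2\langle M\rangle})^{1/2}$, Jensen on the time average, and then Lemma~\ref{exponential-estimates} to control $\E e^{\alpha|W_s^{(2)}|^2}$; your choice of $\mu$ matches the paper's scaling $\gamma\sqrt{\alpha_2(t/2)/(2t)}$ and recovers $c_1=2\sqrt2/\sqrt{\alpha_2(t/2)}$.

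Your Part~(2) is correct but genuinely different from the paper. The paper does not use the It\^o representation $M_1M_2=\int_{t/2}^t M_1\,dM_2$ or any conditioning; it simply bounds $e^{cM_1M_2}$ by $e^{(|c|/2)(M_1^2+M_2^2)}$ via AM--GM and then applies Cauchy--Schwarz to split the expectation into $(\E e^{|c|M_1^2})^{1/2}(\E e^{|c|M_2^2})^{1/2}$, each factor being finite because $M_i$ is a time-changed one-dimensional Brownian motion with deterministically bounded clock $\Gamma_i$, so that $\E e^{\alpha M_i^2}<\infty$ whenever $2\alpha\Gamma_i<1$. Your route---conditioning on $\F_{t/2}$, using the sub-Gaussian conditional bound for $M_2$, then the Hubbard--Stratonovich linearisation $e^{\alpha x^2}=\int e^{xz\sqrt{2\alpha}-z^2/2}\,dz/\sqrt{2\pi}$ for $\E e^{\alpha M_1^2}$---exploits the disjoint-interval structure of $N_t$ more sharply and avoids the AM--GM loss, but at the cost of a longer argument. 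For the present lemma the paper's approach is more economical; your approach would pay off if one wanted tighter constants or if the two factors had asymmetric integrability.
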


\begin{proof}
For $v_1, v_2$ fixed we write $W_t^{(2)}$ for $W_t^{(2)}(v_1, v_2)$ to ease notation.
Let $\alpha_2(t)$ be the  number in Lemma \ref{exponential-estimates}:
$$ \alpha_2(T, K, \|\Rc\|_\infty)=\f  1{49 n^2\|\Rc\|_\infty^2 C_1(T,K)}.$$

For any number $\gamma\not =0$, we apply Lemma \ref{Stroock's-Lemma} to 
$\Psi= \gamma \sqrt{\f {\alpha_2(t)} {2t}}  \int_0^{\f t 2} \< u_s dB_s, W_s^{(2)}\>$ and to the mean $1$ random variable
$\phi=f(x_t) $ to obtain:
 $$\begin{aligned}
&\gamma\E \left (  {f(x_t)}\cdot   \f 2 t \int_0^{\f t 2} \< u_s dB_s, W_s^{(2)}\>\right)\\
&\le  \f 1 {\sqrt t } \f  {2 \sqrt 2}{\sqrt{ \alpha_2(\f t 2)} }\left[ \E[ ( f\log f )(x_t)]
 +\log \E\exp \left(  \gamma \sqrt{\f {\alpha_2(\f t 2)} {2t}} \left ( \int_0^{\f t 2} \< u_s dB_s, W_s^{(2)}\>\right)\right)\right].
 \end{aligned}$$
 Observe that $$c_1\equiv \f  {2 \sqrt 2}{\sqrt{ \alpha_2(\f t 2)} }\le 14\sqrt 2 n\|\Rc\|_\infty\sqrt{C_1(t/ 2, K)}.$$
 Next we use the following inequality, for a real valued continuous local martingale $M_t$ and real number $c$,
$\E e^{c M_t}\le \sqrt{ \E e^{2c^2\<M, M\>_t}}$. Let $\gamma$ be a number with $0<|\gamma|\le 1$. 
Note that $\alpha_2(t)\equiv \alpha_2(t, K, \|\Rc\|_\infty) $ is a monotone decreasing function.
By Lemma \ref{exponential-estimates} we have the following estimate 
 \begin{equation*}
 \begin{aligned}
& \log \E\exp \left( \gamma \sqrt{\f {\alpha_2(t/2)} {2t}} \left ( \int_0^{\f t 2} \< u_s dB_s, W_s^{(2)}\>\right)\right)\\
&\le \f  12 \log  \E e^{  \f {\gamma^2\alpha_2(t/2)} {t} \left ( \int_0^{\f t 2}  |W_s^{(2)}|^2 ds\right)}
\le  \f  12 \log \left( \f 2 t\int_0^{\f t 2} \E e^{  \f  12{\gamma^2\alpha_2(t/2)}  |W_s^{(2)}|^2}ds \right)\\
&\le\f  12 \log \left(\f 2 t\int_0^{\f t 2}  c(n) e^{\gamma^2\alpha_2(t/2)} 
\sqrt{\E \exp\left(2\gamma^2s \alpha_2(t/2)  \int_0^s e^{3 Kr} \|\Theta^h\|^2_{x_r} dr\right)}\right)\\
&\le \f 12 \log c(n)+\f 12 \gamma^2\alpha_2(t/2)+
 \f  14 \log \left( 
 \E \exp\left(2\gamma^2t \alpha_2(t/2)  \int_0^{t} e^{3 Kr} \|\Theta^h\|^2_{x_r} dr\right)\right).
\end{aligned}
 \end{equation*}
 Since $C_1(t,K)\ge 1$, $\alpha_2(s, K, \|\Rc\|_\infty^2)\le \f {1} {49n^2\|\Rc\|_\infty^2}$,
we have,
$$
 \begin{aligned}
& \f {2 \sqrt 2}  {\sqrt{t\alpha_2(t/2)}}  \log \E\exp \left( \gamma \sqrt{\f {\alpha_2(t/2)} {2t}} \left ( \int_0^{\f t 2} \< u_s dB_s, W_s^{(2)}\>\right)\right)\\
\le& \f 1{\sqrt t} \f{\sqrt 2 \log c(n)}{\sqrt{\alpha_2(t/2)}} +  \f 1{\sqrt t} \sqrt 2 \gamma^2\sqrt{\alpha_2(t/2)} 
\\&+   \f 1{\sqrt t}  \f { \sqrt 2}  {2\sqrt{\alpha_2(t/2)}}     \log \left( 
 \E \exp\left(2\gamma^2t \alpha_2(t/2)  \int_0^{t} e^{3 Kr} \|\Theta^h\|^2_{x_r} dr\right)\right)\\
\le &  \f 1{\sqrt t}  c_2+ c_3   \f 1{\sqrt t}  
  \log \left(\E \exp\left(2 \alpha_2(t/2)  \int_0^{\f t 2} e^{3 Ks} \|\Theta^h\|^2_{x_s} ds\right)\right).\end{aligned}$$
  where $$c_2=  7  n\log c(n)\|\Rc\|_\infty \sqrt{2C_1(t/2, K) }
+  \f {\sqrt 2\gamma^2}{7n\|\Rc\|_\infty }, \quad c_3=\f{7} {\sqrt 2} n\|\Rc\|_\infty \sqrt{C_1(t/2, K) }.$$
  To see that $A(t/2, \Theta^h)=\log \left(\E \exp\left(2 \alpha_2(t/2)  \int_0^{\f t 2} e^{3 Ks} \|\Theta^h\|^2_{x_s} ds\right)\right)$
  is finite we use Jensen's inequality to see that
  $$A(t/2, K, \|\Rc\|_\infty, \|\Theta^h\|)\le \log \left(\f 2 t  \int_0^{\f t 2}  \E \left[\exp \left(t \alpha_2(t/2) e^{3 Ks} \|\Theta^h\|^2_{x_s} \right) ds \right]   \right).$$
  
 Since $\rho^h$ is bounded above, by Lemma \ref{lemma:strong-1},   $\alpha r^2(x_t)$ is exponentially integrable 
for a small number $\alpha$. By assumption, $\|\Theta^h\|^2_{x_s} \le c+ \delta r^2(x_s)$ and by a sufficiently small $\delta$ we meant that
 $\delta 2 \alpha_2(t/2)  \int_0^{\f t 2} e^{3 Ks} <\alpha$.
 This completes the proof for part (1).
  For the second part recall that
$$ N_{t} =\f 4{t^2 }\int_{\f t 2}^t \<u_s dB_s, W_s(v_1)\>\int_0^{\f t2 } \<u_s dB_s, W_s(v_2)\>.$$
Set $c_1(t, K)=\f {e^{tK}-1} {Kt}$ and  $\alpha_3(t)=\f 1 {4+\delta_0} \left(\sup_{0<s\le t} c_1( s /2, K)e^{\f {Ks} 2}\right)^{-1}$. 
 By Lemma~\ref{Stroock's-Lemma}, for any number $\gamma$ with $0<|\gamma|\le 1$,
 $$\gamma \E \left (  {f(x_t)}\cdot  N_{t}\right)
 \le  \f 1 {t} \f 1 {\alpha_3(t)}\E (f\log f)(x_t)+\f 1 {t} \f 1 {\alpha_3(t)} \log\E e^{\gamma t \alpha_3(t)N_{t} }.$$
To estimate the exponential term we first apply Cauchy Schwartz's inequality to obtain
 \begin{equation*}
 \begin{aligned}
 \E e^{ \gamma t \alpha_3(t) N_{t} }&\le\sqrt{ \E e^{  \f{2 |\gamma| \alpha_3(t)}{t} \left(\int_{\f t 2}^t \<u_s dB_s, W_s(v_1)\>\right)^2}}
 \sqrt{\E e^{ \f{2|\gamma|  \alpha_3(t)}{t}\left(\int_0^{\f t2 } \<u_s dB_s, W_s(v_2)\>\right)^2}}.
 \end{aligned}
 \end{equation*}
Denote by $\Gamma_1$ and $\Gamma_2$ respectively the quadratic variations of the local martingales $\int_{\f t 2}^\cdot \<u_s dB_s, W_s(v_1)\>$ and $\int_0^{\cdot } \<u_s dB_s, W_s(v_2)\>$. Then  
$$\Gamma_1(t) \le \int_{\f t 2}^t e^{Ks} ds=\f {e^{Kt} -e^{\f {Kt} 2}} {K}, \quad \Gamma_2(t/2)\le \f {e^{\f {Kt}2}-1} K.$$
These local martingales are time changed one dimensional Brownian motions, and so
their exponentials are integrable if
$$ {2 |\gamma| \alpha_3(t)} \f{ \Gamma_i(t)}{t}<\f 12.$$
These hold by construction, hence $\E e^{ t \alpha_3(t) N_{t} }$ is finite and bounded by a universal constant (depending on $\delta_0$ and $|\gamma|$).
 $$\gamma \E \left (  {f(x_t)}\cdot  N_{t}\right)\le\f 1 {t} \f 1 {\alpha_3(t)}\E (f\log f)(x_t)+\f 1 {t} \f 1 {\alpha_3(t)} c(\delta_0, |\gamma|).$$
Finally we observe that $\f 1 {\alpha_3(t)}$ is locally bounded to complete the proof.
\end{proof}

\section{2nd order Feynman-Kac formulas}
\label{section-Hessian}


Let $B_t=(B_t^1, \dots, B_t^m)$. 
 Set $u_s=TF_s(v_1)$ and $v_s=TF_s(v_2)$. 
In \cite[Theorem 2.3]{Elworthy-Li}, the following formula was given for the second order derivative of the
heat semigroup $P_tf$ where $f$ is a $BC^2$ function, and $(x_t)$ is the solution to a gradient Brownian system.
\begin{equs}
\Hess P_tf(x_0)(v_1,v_2)&=\f 4 { t^2}\E\left\{f(x_t)\int_{\f t2}^t \<Y(x_s)u_s, dB_s\>
\int_0^{\f t2}\<Y(x_s)v_s, dB_s\>\right\}\\
&+{\f 2 t}\E\left\{f(x_{t})\int_0^{\f t 2}
\<DY(x_s)(u_s)(v_s), dB_s\>\right\}\\
&+ {\f 2t}\E\left\{ f(x_t)\int_0^{\f t2} 
\<Y(x_s)\nabla TF_s(v_1,v_2), dB_s\>\right\}.
\end{equs}
This was proved using the martingale method developed in \cite[X.-M. Li]{Li-thesis}, using heat semigroup on differential forms. In this formula
the derivatives of the initial function $f$ is not involved, emphasizing the smoothing property of the Bismut-Witten Laplacian.

The conditions 
 on the driving vector fields and their derivatives are presented in  \cite[Theorem 2.3]{Elworthy-Li}.
Suppose that the curvature and the shape operator of the manifold and their
first order derivatives are bounded, it is not difficult to see that these conditions 
are satisfied. It is possible to use the technique of filtering to remove the redundant noise in the formula for $\Nabla d P_tf$ leading to an intrinsic formula. These computations are lengthy and will involve the second order derivatives of the solution to SDE with respect to the initial data,
 and so involving the derivatives of the driving vector fields up to order $3$. 
 For $\Delta P_tf$, the intrinsic formula is proven to hold  under conditions on the Ricci curvature not their derivatives, 
which does not seem to be the case for the Hessian of $P_tf$ (see \cite{Elworthy-Li-form}),   a local Hessian formula is  presented in \cite{ArnaudonPlankThalmaier}.

Let $(x_t, t\ge 0)$ be a $h$-Brownian motion. If $f$ is a smooth function with compact support we define
$M_t^{df}:=f(x_t)-f(x_0)-\int_0^t \Delta^h f(x_s) ds$.
\begin{definition}[\cite{Elworthy-LeJan-Li-book2}]
\label{def-martingale-part}
If $\alpha_t$ is a predictable process with $\alpha_t\in T^*_{x_t}M$, there is a unique process
which we denote by $\int_0^t \alpha_s d\{x_s\}$ satisfying that
$$\<\int_0^\cdot \alpha_s d\{x_s\}, M_t^{df}\>_t=\int_0^t \alpha_s( \nabla f(x_s)) ds.$$
We say that $\int_0^\cdot \alpha_s d\{x_s\}$ is the integral of $\alpha_s$ w.r.t. the martingale part of $x_s$.
\end{definition}

The $h$-Brownian motion has two notable representations. The first is given by the gradient SDE of an isometric embedding $i: M\to \R^m$:
$$dx_t=\sum _{i=1}^mX_i(x_t)\circ dB_t^i +\nabla h(x_t) dt,$$
see \ref{section-gradient} for detail.
For the gradient SDE, $$\int_0^t \alpha_s d\{x_s\}=\sum_{i=1}^m\int_0^t \alpha_s ( X_i(x_s)) dB_s^i.$$

Let $OM$ denote the orthonormal frame bundle over $M$ and $\pi:OM\to M$ the canonical projection taking
$u: \R^n\to T_xM$ to $x$.
Let $\{e_i\}$ be an o.n.b. of $\R^n$ and $\{H_i\}$ the corresponding fundamental horizontal vector fields on $OM$,
so if $\pi$ is the natural projection taking a frame $u: \R^n \to T_xM$ to the base point $x$, $H_i(u)$ projects to $ue_i$,
and $H_i(u)$ is the horizontal lift of $ue$ from $T_{\pi(u)}M$ to $T_uOM$. We consider
the canonical horizontal SDE:
\begin{equation}\label{canonical-1} du_t=\sum_{i=1}^n H_i(u_t) \circ dB_t^i+\h_{u_t} ({\nabla h}(\pi(u_t))  dt,
\end{equation}
whose initial value $u_0$ is taken from $\pi^{-1}x_0$ and where $\h_{u}(v)$ denotes the horizontal lift of a vector on $T_{\pi(u)}M$ to $T_uOM$,
 through $u_0\in \pi^{-1}(x_0)$.
The solutions $(u_t)$ are called  horizontal Brownian motions.
Then 
$\pi(u_t)$ is a $h$-Brownian motion with initial value $x_0$. This is the canonical representation of the $h$-brownian motion.
Furthermore, $u_tu_0^{-1}$ is the stochastic parallel translation along $(x_t)$,
and
 $$\int_0^t \alpha_sd\{x_s\}=\int_0^t \alpha_s(u_s dB_s).$$

 In the rest of this section,  $(x_t)$ denotes a $h$-Brownian flow, given by one of the above two representations,
  and  $(x_t, W_t, W_t^{(2)})$ solves
equation (\ref{system-equs}). 

\begin{theorem}\label{hessian-lemma}
Suppose \underline{\bf C1}. Then for any $f\in \B_b$, any $t>0$,
\begin{equs}
\Hess P_t^hf(v_1, v_2)
=&
\f 4 {t^2} \E\left[ f(x_t)\int_{t/2}^t \<d\{x_s\}, W_s(v_1)\>\int_0^{t/2} \<d\{x_s\}, W_s(v_2)\>\right]\\
&+\f 2 t
\E \left[ f(x_t)\int_0^{t/2} \<d\{x_s\},   W_s^{(2)}\> \right],
\end{equs}
If $x_t=\pi(u_t)$,  see (\ref{canonical-1}), then
 $d\{x_s\}=u_s dB_s$.
\end{theorem}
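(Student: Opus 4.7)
Plan: reduce to the primitive Hessian formula \underline{\bf C1(c)} applied at the intermediate time $t/2$ to the smoothed function $g := P_{t/2}^h f$, and then shift the remaining derivatives of $g$ onto stochastic integrals by Bismut-type integration by parts, using the Markov identity $g(x_{t/2}) = \E[f(x_t)\mid \F_{t/2}^{x_0}]$.

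First, by the semigroup property $P_t^h f = P_{t/2}^h g$ and the fact that $g \in BC^\infty$ under \underline{\bf C1}, applying \underline{\bf C1(c)} at time $t/2$ yields
\[
\Hess P_t^h f(v_1, v_2) = \E[\nabla dg(W_{t/2}(v_1), W_{t/2}(v_2))] + \E[dg(W_{t/2}^{(2)}(v_1, v_2))],
\]
with the damped and doubly damped translations built along the $h$-Brownian motion on $[0, t/2]$. For the second expectation, a Bismut integration by parts on $[0, t/2]$ with Cameron--Martin direction producing the endpoint variation $W_{t/2}^{(2)}$ yields the stated $\frac{2}{t}\E[f(x_t)\int_0^{t/2}\<d\{x_s\}, W_s^{(2)}\>]$ directly. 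For the first expectation, one integration by parts on $[0, t/2]$ converts the $W_{t/2}(v_2)$ direction into a factor $\int_0^{t/2}\<d\{x_s\}, W_s(v_2)\>$, and a second Bismut application on $[t/2, t]$---via strong Markov at $t/2$, so that a fresh Brownian from $x_{t/2}$ couples with the original $h$-Brownian on $[t/2, t]$ and the damped translation extending $W_{t/2}(v_1)$ coincides with $W_s(v_1)$ on that half by the ODE flow property---converts $W_{t/2}(v_1)$ into $\int_{t/2}^t \<d\{x_s\}, W_s(v_1)\>$; together with the combined prefactor $(2/t)^2 = 4/t^2$, they produce the first term.

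The main obstacle is the integration-by-parts identity that yields the doubly damped integrand $W_s^{(2)}$: one must check that the $\Theta^h$-drift and the $\Rc$-martingale in the defining equation of $W^{(2)}$ correspond exactly to the curvature commutators generated by a double Cameron--Martin perturbation of the $h$-Brownian path, a matching which is the content of the cyclic-sum-of-$\nabla\Ric$ identity proved in Lemma \ref{lemma-doubly-damped}. Integrability of the stochastic integrals involved is provided by \underline{\bf C1(b)} together with Lemmas \ref{exponential-estimates} and \ref{estimate-10}; the extension from $f \in BC^2$ to $f \in \B_b$ is a standard dominated-convergence argument using the smoothing property of $P_{t/2}^h$; and the identification $d\{x_s\} = u_s\, dB_s$ under the canonical horizontal representation is immediate from Definition \ref{def-martingale-part} applied to (\ref{canonical-1}).
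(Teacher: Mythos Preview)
Your starting decomposition---apply \underline{\bf C1(c)} at time $t/2$ to $g:=P_{t/2}^h f$---is legitimate, but the two ``Bismut integration by parts'' steps you invoke afterwards are each individually false. Consider the second term: you claim $\E[dg(W_{t/2}^{(2)})]=\tfrac{2}{t}\,\E[f(x_t)\int_0^{t/2}\langle d\{x_s\},W_s^{(2)}\rangle]$. By the martingale representation $g(x_{t/2})=P_t^h f(x_0)+\int_0^{t/2}dP_{t-s}^h f(u_s\,dB_s)$ and It\^o isometry, the right side equals $\tfrac{2}{t}\int_0^{t/2}\E[dP_{t-s}^h f(W_s^{(2)})]\,ds$, i.e.\ a \emph{time average}, whereas the left side is the endpoint value $\E[dP_{t/2}^h f(W_{t/2}^{(2)})]$. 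These agree only if $s\mapsto \E[dP_{t-s}^h f(W_s^{(2)})]$ is constant, but it is not: $dP_{t-s}^h f(W_s^{(2)})$ fails to be a martingale precisely because $W^{(2)}$ carries the extra $\tfrac12\Theta^h\,ds$ drift and the $\Rc(d\{x_s\},\cdot)$ diffusion (the latter contributes a nonzero cross-variation with $\nabla dP_{t-s}^h f$). Equivalently, there is no adapted Cameron--Martin direction whose covariant derivative is $\tfrac{2}{t}W_s^{(2)}$ \emph{and} whose endpoint is $W_{t/2}^{(2)}$, since $W_s^{(2)}$ is not the damped parallel transport of a fixed vector. The same defect appears in your first term: perturbing the path on $[0,t/2]$ to peel off $W_{t/2}(v_2)$ also perturbs $W_{t/2}(v_1)$ through curvature, producing an extra $\E[dg(D_hW_{t/2}(v_1))]$ that you have dropped. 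The two errors cancel in the sum, but neither sub-identity stands alone.

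The paper avoids this by not separating the terms via endpoint IBP. It first applies It\^o's formula to the local martingale $s\mapsto dP_{T-s}^h f(W_s(v_1))$, multiplies by $\int_0^T\langle u_s\,dB_s,W_s(v_2)\rangle$, and uses It\^o isometry to obtain $\int_0^T\E[\Hess P_{T-s}^h f(W_s(v_2),W_s(v_1))]\,ds$ on the right. Only \emph{then} does it invoke \underline{\bf C1(c)} inside the time integral, yielding
\[
T\,\Hess P_T^h f(v_1,v_2)=\E\Bigl[df(W_T(v_1))\int_0^T\langle u_s\,dB_s,W_s(v_2)\rangle\Bigr]+\int_0^T\E\bigl[dP_{T-s}^h f(W_s^{(2)})\bigr]\,ds.
\]
Setting $T=t/2$ and $f\to g$, the first term is converted by the first-order Bismut formula on $[t/2,t]$ exactly as you describe, and the second is \emph{already} the time integral that matches $\E[f(x_t)\int_0^{t/2}\langle u_s\,dB_s,W_s^{(2)}\rangle]$ by It\^o isometry---no endpoint IBP is needed.
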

 
 \begin{proof}
 It is sufficient to prove this for $f\in C_K^\infty$, followed by a smooth approximation of  the bounded measurable function by a uniformly bounded sequence of functions in $C_K^\infty$ and use the upper bound on the stochastic damped parallel translation and the $L^2$ boundedness of $\|W_s^{(2)}\|$ to pass the limit in a neighbourhood of 
 of a given point. We will also need the gradient formula: for all $0< t \leq T$ and $v\in T_{x_0}M$,
$$
dP^{h}_Tf(v) =\text{ } \frac{1}{t}\E \left[  f(x_T) \int_0^t \< W_s(v),u_s dB_s\>\right]$$
 holds if $\Ric-2\Hess(h)$ is bounded from below. See e.g.  \cite{Elworthy-Li, Elworthy-LeJan-Li-book, Thalmaier-Wang, Arnaudon-Driver-Thalmaier,Li-Thompson}.

Let $f\in C_K^\infty$. We only need to prove this for $x_t=\pi(u_t)$, where $u_t$ solves (\ref{canonical-1}).
An application of Weitzenb\"ock formula $\Delta= \trace \nabla^*\nabla - \Ric^{\sharp}$
shows that $dP_t^hf=e^{\f 12 \Delta^h}(df)$ solves the heat equation on differential 1-forms with initial value~$df$:
$$\f {d}{dt}dP_t^h f=\f 12 \trace (\nabla^{h,*}\nabla) (dP_t^hf)
+\f 12 \left( - {\Ric}^{\sharp}+2(\nabla ^2 h)^\sharp)\right)(dP_t^hf),$$
where $\nabla^{h,*}$ is the adjoint of $\nabla$ with respect to $e^{2h} dx$.
Applying It\^o's formula to $(u_t, W_t)$ and $f\circ \pi$, a routine computation shows that
$$d(P_{T-t}^hf)(W_t(v_1))=d(P_t^hf)(v_1)
+\int_0^t \nabla_{u_s dB_s} (d P^h_{T-s} f)(W_s(v_1)).$$
 Taking $t\uparrow T$ we see that, 
$$(df)(W_T(v_1))=d(P_T^hf)(v_1)
+\int_0^T \nabla_{u_s dB_s} (d(P^h_{T-s} f)(W_s(v_1)).$$
If $\rho^h\ge K$ then $|W_t|^2\le e^{-Kt}$. Consequently we may use It\^o's isometry to obtain:
\begin{equation}\label{2nd-order-1}
\begin{aligned}
&\E (df)(W_T(v_1))\int_0^T \<u_s dB_s, W_s(v_2)\>
=\E \left[\int_0^T \Hess (P^h_{T-s} f)(W_s(v_2),W_s(v_1) )ds\right].
\end{aligned}
\end{equation}
Since $f$ satisfies (\ref{second-diff-5}),
\begin{equation*}
\Hess (P_t^hf)(v_2, v_1) =\E\left[ \Hess f(W_t(v_2), W_t(v_1))\right]+\E df (W_t^{(2)}),
\end{equation*}
 we replace $t$ by $s$ and $f$ by $P^h_{T-s} f$ which is easily seen to be also a $BC^2$ function:
 $$\E \left[\Hess P^h_{T-s}f(W_s(v_1), W_s(v_2))\right]=\Hess (P^h_s(P^h_{T-s}f))(v_1, v_2) -\E d(P^h_{T-s}f) (W_s^{(2)}).$$
 With this, we  return to equation (\ref{2nd-order-1})
\begin{equs}
&\E \left[(df)(W_T(v_1))\int_0^T \<u_s dB_s, W_s(v_2)\>\right]\\
=&T\Hess P^h_Tf(v_1, v_2)
+\E \left[\int_0^T  dP^h_{T-s}f (W_s^{(2)}) \;ds\right].
\end{equs}
Now $T\Hess P^h_Tf(v_1, v_2)$ is given by
\begin{equs}
\E \left[(df)(W_{T}(v_1))\int_0^T \<u_s dB_s, W_s(v_2)\>\right] -
\E \left[\int_0^T  dP^h_{T-s}f (W_s^{(2)}) \;ds\right].
\end{equs}
Setting $T=t/2$ and replacing $f$ by $P^h_{t/2}f$, we see that
\begin{equs}
&(\f t 2) \Hess P^h_tf(v_1, v_2)\\
=&
\E\left[ (dP^h_{t/2}f)(W_{t/2}(v_1))\int_0^{t/2} \<u_s dB_s, W_s(v_2)\>\right]-
\E \left[\int_0^{t/2}  dP^h_{t-s}f (W_s^{(2)}) \;ds\right]\\
=&\f 2 t
\E\left[ f(x_t)\int_{t/2}^t \<u_s dB_s, W_s(u_1)\>\int_0^{t/2} \<u_s dB_s, W_s(v_2)\>\right]\\
&+
\E \left[ f(x_t)\int_0^{t/2} \<u_s dB_s,  W_s^{(2)}\> \right].
\end{equs}
In the above we have used the Markov property and the first order derivative formula for the first term which is obtained from considering $P^h_{\f t 2, t-s}(f\circ \pi)(\psi_{\f t 2,s} (u_{\f t 2}))$ where $\psi_{s,t}$ denotes the solution flow for the canonical SDE begins with $s$ and ending at $t$ and $\psi_{0,t}=u_t$. Apply It\^o's formula to it, followed by taking $s$ to $t$ we see the following,
$$f(x_t)=P^h_{\f t 2, t}f(x_{\f t 2})+\int_{\f t 2}^t \left( dP^h_{\f t 2, t-r} f\right)(u_rdB_r).$$
Multiply both sides by the suitable martingale we see
$$\E \left [f(x_t) \int_{t/2}^t \<u_s dB_s, W_s(u_0)\>\right]
=\E\left[ \int_{ t/ 2}^t \left( dP^h_{\f t 2, t-r} f\right)(W_r(v_1))dr\right]=\f t 2dP^h_{\f t 2} f(v_1).$$
For the second term we observe that
$$P^h_{t/2} f(x_{t/2})=P^h_tf(x_0)
+\int_0^{t/2} d(P^h_{t-s}f )_{x_s}(u_s dB_s)$$
and multiply both sides by $\int_0^{t/2} \<u_s dB_s,  W_{s}^{(2)}\>$. Since $\int_0^t |W_s^{(2)}|^2 ds$ is finite, we obtain $\E \left[P^h_{\f t 2}f(x_{\f t2}) \int_0^{t/2} \<u_s dB_s,  W_{s}^{(2)}\>\right]=\E\left[ \int_0^{t/2} d(P^h_{t-s}f )(W_s^{(2)}) \right]ds$ and the desired formula follows. \end{proof}
 
%
%
\begin{definition}\label{def-nt}
Given $v_1, v_2\in T_xM$ we define
$$N_{t} =\f  4{t^2}\int_{\f t 2}^t \<d\{x_s\}, W_s(v_1)\>\int_0^{\f t2} \<d\{x_s\}, W_s(v_2)\>,$$
where $d\{x_s\}$ stands for the martingale part of $x_s$.
\end{definition}
In case $x_t$ is given by the projection of $u_t$, 
\begin{equation}
N_{t} =\f 4 {t^2} \int_{\f t 2}^t \<u_s dB_s, W_s(v_1)\>\int_0^{\f t 2} \<u_s dB_s, W_s(v_2)\>.
\end{equation}

\begin{lemma}\label{V-integral}
Assume $\rho^h$ is bounded from below.
If $V:M\to \R$ is bounded and H\"older continuous with $V(x_0)=0$ and $f:M\to \R$ is a bounded measurable function, then
$$\int_0^t \left( \E\left[  P_{t-r}^h (VP_r^h f)N_{ t-r}\right]\right)^{1+\epsilon}dr$$
is finite for some number $\epsilon>0$.
\end{lemma}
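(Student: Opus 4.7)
The plan is to read $\E[P_{t-r}^h(V P_r^h f)\,N_{t-r}]$ as $\E[(V P_r^h f)(x_{t-r})\,N_{t-r}]$: this is the $N$-contribution appearing in the Hessian formula of Theorem~\ref{hessian-lemma} applied to $P_{t-r}^h$ acting on $g_r := V P_r^h f$ (which is how this integrand enters the variation-of-constants expression for $\Hess P_t^{h,V}f$). With that reading, I bound the product by H\"older's inequality with conjugate exponents $p,q \in (1,\infty)$, exploiting the vanishing $g_r(x_0)=0$ together with the H\"older regularity of $V$ to offset the $1/(t-r)$ singularity of $N_{t-r}$.

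For the $L^p$ bound on $N_{t-r}$, write $N_{t-r} = \frac{4}{(t-r)^2}\,M_1 M_2$ with $M_1, M_2$ the two martingale integrals on the disjoint intervals $[(t-r)/2,t-r]$ and $[0,(t-r)/2]$. Under $\rho^h \ge -K$ the pointwise estimate $|W_s(v)| \le e^{Ks/2}|v|$ gives quadratic variations $\langle M_i\rangle \le \tfrac{t-r}{2}\,e^{K(t-r)}$, whence Burkholder--Davis--Gundy yields $\|N_{t-r}\|_{L^p(\Omega)} \le C(p,K,t)/(t-r)$. For the $L^q$ bound on the function factor, note that $\|P_r^h f\|_\infty \le \|f\|_\infty$ and, since $V$ is bounded and H\"older of some exponent $\alpha \in (0,1]$ with $V(x_0)=0$, one has the global inequality $|V(y)|\le C\,d(x_0,y)^\alpha$ (for $d(x_0,y)\ge 1$ one absorbs $\|V\|_\infty$ into $C$ via $d^\alpha \ge 1$). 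Combined with the moment bound $\E[d(x_{t-r},x_0)^{\alpha q}] \le c(\alpha q)\,e^{c(\alpha q)t}(t-r)^{\alpha q/2}$ furnished by Lemma~\ref{lemma:strong-1}(2), this gives $\|(V P_r^h f)(x_{t-r})\|_{L^q(\Omega)} \le C(\alpha,q,t)\,\|f\|_\infty\,(t-r)^{\alpha/2}$.

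H\"older's inequality then produces
\[
\bigl|\E[(V P_r^h f)(x_{t-r})\,N_{t-r}]\bigr| \;\le\; C(t)\,\|f\|_\infty\,(t-r)^{\alpha/2 - 1}
\qquad \text{for all } r \in [0,t],
\]
so that $\int_0^t|\E[\cdots]|^{1+\epsilon}\,dr \le C'\int_0^t(t-r)^{(1+\epsilon)(\alpha/2 - 1)}\,dr$ is finite whenever $(1+\epsilon)(1-\alpha/2) < 1$, i.e.\ for any $0 < \epsilon < \alpha/(2-\alpha)$. The only real obstacle is a notational one: the expectation in the statement is not $(P_{t-r}^h g_r)(x_0)\,\E N_{t-r}$, which would vanish by martingale properties, but rather the joint expectation $\E[g_r(x_{t-r})\,N_{t-r}]$ coming from Theorem~\ref{hessian-lemma}; once that reading is accepted, the proof reduces to the H\"older-type interpolation of the $1/(t-r)$ blow-up of $N_{t-r}$ against the $(t-r)^{\alpha/2}$ H\"older smallness of $g_r$ at $x_0$.
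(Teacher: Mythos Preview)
Your argument is correct and follows essentially the same route as the paper: bound $\|N_{t-r}\|_{L^p}$ by $C/(t-r)$ via Burkholder--Davis--Gundy and the pathwise estimate $|W_s|\le e^{Ks/2}$, bound $\|(VP_r^hf)(x_{t-r})\|_{L^q}$ by $C(t-r)^{\alpha/2}$ using $V(x_0)=0$, H\"older regularity of $V$, and the moment bound on $d(x_{t-r},x_0)$ from Lemma~\ref{lemma:strong-1}, then combine by H\"older's inequality. The only cosmetic difference is that the paper fixes Cauchy--Schwarz ($p=q=2$) and treats the locally-H\"older case separately via an exit-time argument, whereas you absorb local H\"older plus boundedness into a single global bound $|V(y)|\le C\,d(x_0,y)^\alpha$; your packaging is slightly cleaner but the substance is identical.
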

\begin{proof}
It is sufficient to show that there exist positive numbers $c$ and $\delta$ such that $$ \E\left[  P_{t-r}^h (VP_r^{h,V} f)N_{ t-r}\right]\le \f c {(t-r)^{1-\delta}}.$$ 
By Cauchy-Schwarz inequality and the bound on $|W_t|$ using the lower bound on $\rho^h$, and Burkholder-Davies-Gundy inequality 
we see that for any $q>0$,
  $\|N_{ t-r}\|_{L^q}$ is of the order of $\f 1{t-r}$, and so
  $$ \E\left[  P_{t-r}^h (VP_r^{h,V} f)N_{ t-r}\right]\le C\f 1 {t-r}  \left(\E\left[  P_{t-r}^h (VP_r^{h,V} f)\right]^2\right)^{\f 12}.$$
To treat $ P_{t-r}^h (VP_r^{h,V} f)$, we simply drop $P_r^{h,V} f$ which can be seen easily, by the Feynman-Kac's formula, to be bounded. We have,
$$\E[P_{t-r}^h (VP_r^h f)]^2\le (|f|_\infty)^2\E V^2(x_t).$$
If $V$ is globally H\"older continuous of order $\alpha$, $$ \E V^2(x_{t-r})\le \E d^{2\alpha}(x_{t-r}, x_0)\le c {(t-r)^\alpha}.$$ giving
enough to ensure the required integrability. We have used part (2) of Lemma \ref{lemma:strong-1}.
  
  Now we suppose that $V$ is H\"oder continuous of order $\alpha$ on a small geodesic ball $B(a)$ of radius $a$ around $V_0$. Denote by $\tau$ 
  the first exit time of $x_t$ from $U$.  Then for a constant $c$ depending on $B(a)$,
 $$\begin{aligned} 
  \E V^2(x_{t-r})\le & V^2(x_{t-r}) \1_{t-r\le \tau} + \E V^2(x_{t-r}) \1_{t-r\ge \tau}\\
  \le & c(B(a)) (t-r)^\alpha+ |V|_\infty \PP(\tau \le t-r)\\
  \le & c(B(a)) (t-r)^\alpha+ |V|_\infty  \f {\E \sup_{s\le t-r} d^2(x_s, x_0)}{a^2}\\
   \le & c(B(a)) (t-r)^\alpha+ |V|_\infty  \f {t-r}{a^2}.
  \end{aligned}$$
We used in line 3 Chebyshev inequality and in line 4 an estimate from  Lemma \ref{lemma:strong-1}. The proof is complete.
\end{proof}
We present an elementary lemma.
\begin{lemma}
\label{Fubini}
Let $g: [0, \infty)\times M\to \R$ be a Borel measurable function. Define $Tg(t)=\int_0^t g(r,x) dr$.
Then $d(Tg)(t)(v)=\int_0^t dg(r,\cdot)(v)$ for any $v\in T_xM$ provided that
for any normal geodesic $(\gamma(t), t\in [0, a])$ with initial value $x$ and initial velocity  $\dot \gamma(0)$,
$dg(r,  \gamma(s)(\dot \gamma(s))\in L^1([0,t]\times [0, a])$, and
 $s\mapsto \int_0^t dg(r,  \gamma(s)) (\dot  \gamma(s))$ is continuous in $L^1$.
\end{lemma}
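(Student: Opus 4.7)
The plan is to reduce the claim to an exchange of the order of integration together with the fundamental theorem of calculus along a geodesic. Fix $x \in M$ and $v \in T_xM$, and let $\gamma\colon [0,a]\to M$ be the normal geodesic with $\gamma(0)=x$ and $\dot\gamma(0)=v$. By definition of the directional derivative in the direction of $v$, the goal is to evaluate
\[
d(Tg)(t)(v)=\lim_{s\to 0^+}\frac{1}{s}\bigl(Tg(t)(\gamma(s))-Tg(t)(x)\bigr).
\]

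First, by the $L^1$ hypothesis, for almost every $r\in[0,t]$ the function $\sigma\mapsto g(r,\gamma(\sigma))$ is absolutely continuous on $[0,a]$ with derivative $dg(r,\gamma(\sigma))(\dot\gamma(\sigma))$, so the fundamental theorem of calculus gives
\[
g(r,\gamma(s))-g(r,\gamma(0))=\int_0^s dg(r,\gamma(\sigma))(\dot\gamma(\sigma))\,d\sigma.
\]
Integrating in $r\in[0,t]$ and using the definition of $T$,
\[
Tg(t)(\gamma(s))-Tg(t)(x)=\int_0^t\!\!\int_0^s dg(r,\gamma(\sigma))(\dot\gamma(\sigma))\,d\sigma\,dr.
\]

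Next, the assumption that $(r,\sigma)\mapsto dg(r,\gamma(\sigma))(\dot\gamma(\sigma))$ belongs to $L^1([0,t]\times[0,a])$ lets us apply Fubini's theorem to swap the order of integration. Setting
\[
\Phi(\sigma):=\int_0^t dg(r,\gamma(\sigma))(\dot\gamma(\sigma))\,dr,
\]
we obtain
\[
Tg(t)(\gamma(s))-Tg(t)(x)=\int_0^s\Phi(\sigma)\,d\sigma,
\]
so that the difference quotient equals $\tfrac{1}{s}\int_0^s\Phi(\sigma)\,d\sigma$.

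Finally, the hypothesis that $s\mapsto\Phi(s)$ is continuous (in the stated $L^1$ sense, which is enough to ensure that its Lebesgue averages at $0$ converge to $\Phi(0)$) yields
\[
\lim_{s\to 0^+}\frac{1}{s}\int_0^s\Phi(\sigma)\,d\sigma=\Phi(0)=\int_0^t dg(r,x)(v)\,dr,
\]
which is exactly $d(Tg)(t)(v)=\int_0^t dg(r,\cdot)(v)\,dr$. There is no real obstacle: the lemma is a bookkeeping statement, and the only point to watch is that the $L^1$ integrability hypothesis is used twice, once to justify the pointwise fundamental theorem of calculus for $g(r,\cdot)$ along $\gamma$ for a.e.\ $r$, and once to license the application of Fubini; the continuity hypothesis then handles the passage to the limit $s\to 0$.
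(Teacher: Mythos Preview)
Your proof is correct and follows exactly the same route as the paper: write the directional derivative as a difference quotient, convert the difference to an integral along the geodesic via the fundamental theorem of calculus, swap the two integrals by Fubini, and then pass to the limit using the continuity hypothesis. The paper's proof is a two-line sketch of precisely these steps; you have simply filled in the details.
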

\begin{proof}
We turn the differential into an integral and use Fubini's theorem to exchange the order of integration and obtain
 $d(Tg)(t)(v) =\lim_{\epsilon \to 0} \f 1 \epsilon\int_0^\epsilon \int_0^t  dg(r,  \gamma(s))(\dot \gamma(s)) ds dr$.
Finally we use the $L^1$ continuity to conclude.
\end{proof}

For $a<t$ and $x_0$ fixed we define 
\begin{equation}
\V_{t-r, t}=(V  (x_{t-r})-V(x_0))e^{-\int_{t-r}^t  [V(x_s)-V(x_0)]ds}.
\end{equation}
Thus we first remover $V(x_0)$ and then put it back.

\begin{theorem}[Second Order Feynman-Kac Formula]
\label{second-order}
Assume \underline{\bf C1}. Let  $V$ be a bounded H\"{o}lder continuous function.  Then
for any $f\in \B_b(M;\R)$,
\begin{equs}
 \Hess P_t^{h,V}f (v_1, v_2)
=&  e^{-V(x_0)t}\E\left[ f(x_t)N_{ t}\right]
+ e^{-V(x_0)t}\E \left[ f(x_t)\f 2 t\int_0^{t/2} \<d\{x_s\},  W_{s}^{(2)}(v_1, v_2)\> \right]\\
&\quad+ e^{-V(x_0)t}\int_0^t  
\E \left[f (x_t)\f {2 \V_{t-r, t} } {t-r}\int_0^{(t-r)/2} \<d\{x_s\}, W_{s}^{(2)}(v_1,v_2)\> \right] dr\\
&\quad+e^{-V(x_0)t} \int_0^t  \E\left[ f (x_t)  \V_{t-r, t}N_{ t-r}\right]dr.
\end{equs}
\end{theorem}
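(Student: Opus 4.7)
The plan is to combine the Duhamel variation-of-constants identity for $P_t^{h,V}$ with the potential-free Hessian formula from Theorem~\ref{hessian-lemma}. The key preliminary device is a rigid shift of the potential: set $\tilde V := V - V(x_0)$, so $\tilde V(x_0)=0$. Since $V(x_0)$ is a scalar, Feynman--Kac gives
\[
 P_t^{h,V} f \;=\; e^{-V(x_0)\, t}\, P_t^{h,\tilde V} f
\]
as functions on $M$, accounting for the prefactor $e^{-V(x_0)t}$ in the statement and replacing $V$ by a quantity vanishing at $x_0$: H\"older continuity then gives $|\tilde V(x_s)| = O(d(x_0,x_s)^\alpha)$, which is decisive for the integrability at $r=t$ below.

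Next, from
\[
 P_t^{h,\tilde V} f \;=\; P_t^h f \;-\; \int_0^t P_{t-r}^h\!\bigl( \tilde V\, P_r^{h,\tilde V} f\bigr)\, dr
\]
(obtained by differentiating $r \mapsto P_{t-r}^h\, P_r^{h,\tilde V}\!f$), I take the Hessian at $x_0$ and exchange $\Hess$ with $\int_0^t dr$ via Lemma~\ref{Fubini}. Applying Theorem~\ref{hessian-lemma} to the leading term produces the first two summands in the statement. Applying it to each integrand $\Hess P_{t-r}^h(g_r)$ with $g_r := \tilde V\, P_r^{h,\tilde V}\!f \in \B_b$ (no smoothness needed, since Theorem~\ref{hessian-lemma} applies to bounded Borel functions and $P_r^{h,\tilde V}\!f$ is bounded by Feynman--Kac) yields two expectations of $g_r(x_{t-r})$ weighted by the $\F_{t-r}$-measurable factors $N_{t-r}$ and $\tfrac{2}{t-r}\int_0^{(t-r)/2}\langle d\{x_s\}, W_s^{(2)}(v_1,v_2)\rangle$.

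The final step is the Markov property: conditional on $\F_{t-r}$, the process $(x_{t-r+s})_{s\ge 0}$ is an $h$-Brownian motion from $x_{t-r}$, so
\[
 \bigl(P_r^{h,\tilde V}\!f\bigr)(x_{t-r}) \;=\; \E\!\Bigl[\, f(x_t)\, e^{-\int_{t-r}^t \tilde V(x_s)\, ds} \,\Big|\, \F_{t-r}\Bigr].
\]
Hence $\tilde V(x_{t-r})\,e^{-\int_{t-r}^t \tilde V(x_s)\,ds}$ is precisely $\V_{t-r,t}$, and the tower property absorbs the conditional into the outer expectation, producing the remaining two integral terms of the statement (with signs tracked carefully through the Duhamel integral).

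The principal obstacle is justifying the $\Hess$--$\int_0^t dr$ interchange (and the implicit Fubini use) near $r=t$, where the Hessian kernels are of order $1/(t-r)$ and the integrand is only marginally integrable. This is exactly what Lemma~\ref{V-integral} is designed to handle: the $V(x_0)$-shift gives $\|\V_{t-r,t}\|_{L^p} = O((t-r)^{\alpha/2})$ via H\"older continuity and Lemma~\ref{lemma:strong-1}, which combined with the Burkholder--Davis--Gundy bound $\|N_{t-r}\|_{L^q}=O(1/(t-r))$ and an analogous $O(1/\sqrt{t-r})$ bound for the $W^{(2)}$-stochastic integral (using \underline{\bf C1(b)} to control $\sup_{s\le t}\E\|W_s^{(2)}\|^2$) furnishes an $L^1([0,t],\,dr)$ majorant and legitimises the interchange; the remainder is bookkeeping.
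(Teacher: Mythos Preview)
Your proposal is correct and follows essentially the same route as the paper: shift $V$ to vanish at $x_0$, apply the Duhamel identity, invoke Theorem~\ref{hessian-lemma} on both the leading term and each integrand, use the Markov property to rewrite $P_r^{h,\tilde V}f(x_{t-r})$ as a conditional expectation, and justify the $\Hess$--$\int_0^t dr$ interchange via Lemma~\ref{V-integral} for the $N_{t-r}$ term and the $O(1/\sqrt{t-r})$ bound from \underline{\bf C1(b)} for the $W^{(2)}$ term. The only cosmetic difference is that the paper first checks the interchange at the level of the gradient before passing to the Hessian.
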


\begin{proof}
Let us assume that $V(x_0)=0$. 
If $V(x_0)\not =0$, we  shift it to be zero at $x_0$. If $U=V-V(x_0)$ then
$P_t^{h,V}=e^{-V(x_0)t} P_t^{h, U}$, and
$$dP_t^{h,V}f=e^{-V(x_0)t} dP_t^{h, U}f, \quad \nabla dP_t^{h,V}f=e^{-V(x_0)t}  \nabla d P_t^{h, U}f.$$

Let $f\in \B_b$ we may differentiate both sides of the variation of constant formula
and obtain the following formula where the differentiation is w.r.t. the first variable at the point $x_0$ and $V(x_0)=0$:
\begin{equation}
\label{basic-Hessian-potential}
\Hess P_t^{h,V}f (v_1, v_2)=\Hess P_t^hf(v_1, v_2)+\int_0^t \Hess P^h_{t-r} (V P_r^{h,V}f)(v_1, v_2) \,dr.
\end{equation}
To justify the exchange of integral and differential we first check that
$$d P_t^{h,V}f (v)=d P_t^hf(v)+\int_0^t d P^h_{t-r} (V P_r^{h,V}f)(v) \,dr$$
holds for any $v\in T_{x_0}M$.
Indeed let $x_t=\pi(u_t)$ where $(u_t)$ is the canonical process, we apply the formula,
$dP_t^f f=\f 12 \E f(x_t)\int_0^t\<u_r dB_r, W_r(\dot \sigma(s))) \>$, which holds since $\rho^h$ is bounded from below. Then,
$$\int_0^t d P^h_{t-r} (V P_r^{h,V}f)(\dot \sigma(s))
=\int_0^t \f 1 {t-r} \E \left[(V P_{t-r}^{h,V}f)(F_t(\sigma(s))) \int_0^{t-r} \<u_r dB_r, W_r(\dot \sigma(s))) \>\right].$$
Let $\sigma: [0,a]\to M$ be a normal geodesic with initial conditions $x_0$ and $v$. Since $V, f$ are bounded, 
$$\left| \E \left[(V P_{t-r}^{h,V}f)(F_t(\sigma(s))) \int_0^{t-r} \<u_r dB_r, W_r(\dot \sigma(s))) \>\right]\right|
\le C \f 1 {\sqrt{t-r}}.$$
By Lemma \ref{Fubini} the exchange of order of integration and differentiation is justified.

For the second order derivative, we apply Theorem \ref{hessian-lemma} to its integrand. Formally we have,
\begin{equs}
&\int_0^t \Hess P^h_{t-r} (V P_r^{h,V}f) (v_1,v_2)\,dr\\
&=\int_0^t \f 4 {(t-r)^2} \E\left[ (V P_r^{h,V}f) (x_{t-r})\int_{(t-r)/2}^{t-r} \<u_s dB_s, W_s(v_1)\>\int_0^{(t-r)/2} \<u_s dB_s, W_s(v_2)\>\right]\,dr\\
&+\int_0^t\f 2 {t-r}
\E \left[ (V P_r^{h,V}f) (x_{t-r})\int_0^{(t-r)/2} \<u_s dB_s, W_{s}^{(2)}(v_1, v_2)\> \right]\,dr.
\end{equs}
The integrand has an obvious singularity at $r=t$. 
Since $|W_s^{(2)}|^2$ has second moment which is locally bounded, the norm of the integrand in the second integral is of order $\sqrt{t-r}$ and is integrable. For the first integral,
$$\int_{(t-r)/2}^{t-r} \<u_s dB_s, W_s(v_1)\>\int_0^{(t-r)/2} \<u_s dB_s, W_s(v_2)\>,$$
we use Lemma \ref{V-integral} to see the first integrand is uniformly integrable and hence we may exchange the order of integration and differentiation.

Finally, by the zero order Feynman-Kac formula, the first term on the right hand side~is:
$$\int_0^t dr\;\E\left[ V  (x_{t-r})f (x_t)e^{-\int_{t-r}^t V(x_r) dr}N_{t-r}\right]dr,$$
while the second term becomes,
$$
-\int_0^t  dr \;\f 2 {t-r}
\E \left[ V  (x_{t-r})f (x_t)e^{-\int_{t-r}^t V(x_r) dr}\int_0^{(t-r)/2} \<u_s dB_s,  W_{s}^{(2)}\> \right].$$
This concludes the proof.
\end{proof}
An immediate consequence of (\ref{basic-Hessian-potential}) is that the Hessian of the Feynman-Kac kernel $p_t^{h,V}$ can be expressed by
$p_t^h$ and its derivatives. 
 \begin{corollary} We assume $V(x_0)=0$ for simplicity and the conditions of the earlier theorem.
$$\Hess p_t^{h,V}(x_0,y)=\Hess p_t^h(x_0,y)+
\int_0^t \int_M V(z) \Hess p_{t-r}^h(x_0,z) p_r^{h,V} (z,y) dz dr;$$
$$\Hess p_t^{h,V}(x_0,y)=\Hess p_t^h(x_0,y)+
\int_0^t \int_M V(z) \Hess p_{t-r}^h(x_0,z) p_r^{h} (z,y)\E[ e^{-\int_0^r V(Y_s^{r,z,y})}] dz dr,$$
where $Y_s^{r,z,y}$ is the $h$-Brownian bridge with terminal value $r$, initial value $z$ and terminal value $y$.
\end{corollary}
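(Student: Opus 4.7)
The plan is to pass from the variation-of-constants identity \eqref{basic-Hessian-potential} at the level of the semigroups $P_t^{h,V}f$ to a pointwise identity at the level of the integral kernels, by expressing each $P$-term as an integral against $p$ and then extracting the kernel identity by arbitrariness of $f$. Throughout we work under the hypotheses of Theorem~\ref{second-order} so that the Hessian formula, the estimates on $\Hess P_{t-r}^hg$, and the integrability guaranteed by Lemma~\ref{V-integral} are all available.

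First I would fix $f\in\B_b(M;\R)$, $x_0\in M$, $y_0$ a reference point, and $v_1,v_2\in T_{x_0}M$, and insert into \eqref{basic-Hessian-potential} the representations
\begin{equation*}
P_t^{h,V}f(x_0)=\int_M p_t^{h,V}(x_0,y)f(y)\,e^{2h(y)}dy,\qquad P^h_{t-r}(VP_r^{h,V}f)(x_0)=\int_M p_{t-r}^h(x_0,z)\,V(z)\,P_r^{h,V}f(z)\,e^{2h(z)}dz,
\end{equation*}
with $P_r^{h,V}f(z)=\int_M p_r^{h,V}(z,y)f(y)e^{2h(y)}dy$. The key technical step is to interchange the Hessian $\nabla d$ (in $x_0$) with the $y$- and $z$-integrations. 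For the first term this follows from the estimates on $\Hess p_t^h$ proved in \S\ref{section-Hessian-estimates} together with boundedness of $f$. For the integral term one needs that the map $(r,z)\mapsto V(z)\,\Hess_{x_0}p_{t-r}^h(x_0,z)\,p_r^{h,V}(z,y)$ is dominated, uniformly in $x_0$ on a neighbourhood, by an $L^1([0,t]\times M\times M, dr\,dz\,f(y)e^{2h}dy)$ function; I would obtain this by combining the pointwise Hessian bounds on $p_{t-r}^h$ (which contribute a $\tfrac{1}{(t-r)}$-type singularity plus Gaussian-in-$d(x_0,z)$ decay) with boundedness of $V$, boundedness of $p_r^{h,V}$ coming from Feynman--Kac, and the argument used in the proof of Lemma~\ref{V-integral} to absorb the singularity at $r=t$ via the H\"older continuity of $V$. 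Once the interchange is justified, \eqref{basic-Hessian-potential} reads
\begin{equation*}
\int_M \Hess p_t^{h,V}(x_0,y)f(y)e^{2h(y)}dy=\int_M\!\Big(\Hess p_t^h(x_0,y)+\int_0^t\!\int_M V(z)\Hess p_{t-r}^h(x_0,z)p_r^{h,V}(z,y)e^{2h(z)}dz\,dr\Big)f(y)e^{2h(y)}dy.
\end{equation*}

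Next I would let $f$ range over $C_c(M;\R)$. Since both sides are continuous linear functionals in $f$, arbitrariness of $f$ yields the pointwise identity
\begin{equation*}
\Hess p_t^{h,V}(x_0,y)=\Hess p_t^h(x_0,y)+\int_0^t\!\int_M V(z)\,\Hess p_{t-r}^h(x_0,z)\,p_r^{h,V}(z,y)\,e^{2h(z)}dz\,dr,
\end{equation*}
for a.e.\ $y$, and the identity then holds everywhere by joint continuity of the heat kernels. (The measure conventions in the statement absorb the factor $e^{2h(z)}dz$ into the notation $dz$.) For the second displayed equality I would substitute the Brownian-bridge representation of the Feynman--Kac kernel applied with base semigroup $P^h$,
\begin{equation*}
p_r^{h,V}(z,y)=p_r^h(z,y)\,\E\bigl[e^{-\int_0^r V(Y_s^{r,z,y})ds}\bigr],
\end{equation*}
analogous to \eqref{fk-bridge}, where $Y^{r,z,y}$ is the $h$-Brownian bridge from $z$ to $y$ over time $r$; inserting this into the first identity gives the second.

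The main obstacle is the singularity at $r=t$ in $\Hess p_{t-r}^h(x_0,\cdot)$, which is exactly the point where the Fubini/interchange arguments can fail. I would handle it as in the proof of Theorem~\ref{second-order}: use the decomposition $V=(V-V(x_0))+V(x_0)$ to kill the value at $x_0$, exploit H\"older continuity of $V$ near $x_0$ and the Gaussian-type tails of $p_{t-r}^h$ and its Hessian (on manifolds with a pole these come from the ansatz \eqref{Hessian-idea}; on general manifolds from the estimates in \S\ref{section-Hessian-estimates}) to bound the $z$-integral by $C(t-r)^{-1+\alpha/2}$, which is integrable in $r$ up to $t$. Once this absolute-integrability bound is in hand, every interchange above is legitimate and the proof is complete.
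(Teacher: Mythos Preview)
Your proposal is correct and follows exactly the route the paper has in mind: the paper's own proof is the one-line remark ``The proof for these are straightforward. The integrals make sense by the previous estimates,'' i.e.\ pass from the semigroup identity \eqref{basic-Hessian-potential} to kernels via arbitrariness of $f$, and then insert the bridge representation \eqref{fk-bridge} for $p_r^{h,V}$. You have simply supplied the details (Fubini/differentiation interchange, handling of the $r\to t$ singularity as in Lemma~\ref{V-integral} and Theorem~\ref{second-order}) that the paper leaves implicit.
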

The proof for these are straightforward. The integrals make sense by the previous estimates.
\section{Hessian estimates for Schr\"odinger semi-groups and Feynman-Kac kernels}
\label{section-Hessian-estimates}

If $\phi$ is a random function, set $\HH(\phi)=\phi\log \phi$.
The following first order estimates are given in \cite{Li-Thompson} for a positive function normalised such that $P_t^{h,V}f(x_0)=1$:
$$| \nabla P^{h,V}_t f|_{x_0} \leq \sqrt{ \frac{C_1(t,K)}{t}}\Big(\E\left[\left(\HH\left( f(x_t)e^{\int_0^t V(x_s) ds}\right)\right)^+\right] \Big)^{\frac{1}{2}}+t|\nabla V|_{\infty} C_2(t,K),$$
Where $C_1, C_2$ are explicit constants.
Choosing $f$ to be the Feynman-Kac kernel in the above we have the following  kernel estimates:
$$| \nabla \log p^{h,V}_t |_{x_0} \leq \frac{\sqrt{2C_1 }}{\sqrt{t}}\left(\Big(\sup_{y\in M} \log  \frac{p^h_t(y,y_0)}{p^h_{2t}(x_0,y_0)}+ 2t(\sup V-\inf V)    \Big)^+\right)^{\frac{1}{2}}+t|\nabla V|_{\infty} C_2.$$
Since 
$$\nabla d \log P_t^{h,V}f=\f {\nabla d P^{h,V}_tf}{P_t^{h,V}f}- \nabla \log P_t^{h,V}f \otimes \nabla \log P_t^{h,V}f,$$
for Hessian estimates on $\log P_T^{h,V}$ it is sufficient to estimate the first term in the identity.
 

We make an Hesian estimate for the Schr\"odinger operator at the point where the potential function vanishes. (Shift the potential to zero otherwise).
\begin{theorem}\label{Hessian-estimates-thm1}
Assume  \underline{\bf C2}+ \underline{\bf C1(c)}.  Let $V$ be a bounded H\"older continuous non-negative function with $V(x_0)=0$. If $f$ is a non-negative non-trivial function, then
$$\begin{aligned}
\left|\f{\nabla dP_t^{h,V} f}{P_t^h f}\right|_{x_0}
 \le{}& \f 1 {\sqrt t } \left( c_1\E \left[\f {f(x_t)} {P_t^h f(x_0)}\log \left(\f {f(x_t)} {P_t^h f(x_0)}\right)\right] +c_2+c_3A\right)\\
 &+\f 1 t c_2(t,K) \E \left[\f {f(x_t)} {P_t^h f(x_0)}\log \left(\f {f(x_t)} {P_t^h f(x_0)}\right)\right]+\f 1t C_2(t,K) c(\delta_0)\\
 &+ c|f|_\infty (|V|_\infty)^{\f 12} \int_0^t  \f {\sqrt{P_{t-r}|V|(x_0)} }{t-r}dr+c |f|_\infty |V|_\infty \sqrt t\sup_{s \le t}\E | W_{s}^{(2)}(v_1,v_2)|^2 \end{aligned}$$ 
 Here  the constants may depend on the time and on the potential $V$, the dependence are locally bounded.
 \end{theorem}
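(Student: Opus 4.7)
The proof starts from the Second Order Feynman-Kac Formula of Theorem~\ref{second-order}, which, since $V(x_0)=0$ reduces the prefactor $e^{-V(x_0)t}$ to $1$, decomposes $\nabla d P_t^{h,V}f(v_1,v_2)$ at $x_0$ into four explicit expectations: two ``endpoint'' terms $\E[f(x_t)N_t]$ and $\E[f(x_t)\f 2 t \int_0^{t/2}\<d\{x_s\}, W_s^{(2)}\>]$, together with two Duhamel-type time integrals weighted by $\V_{t-r,t}=V(x_{t-r})e^{-\int_{t-r}^t V(x_s)ds}$. I divide through by $P_t^h f(x_0)$ and work with the nonnegative, mean-one weight $\bar f:=f/P_t^h f(x_0)$, for which $\E\bar f(x_t)=1$; this is exactly the normalisation required to invoke Lemma~\ref{estimate-10}.

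The two endpoint contributions are precisely the expressions estimated there. Part~(2) of Lemma~\ref{estimate-10} applied with $f\to\bar f$ bounds the $N_t$-term by $\f 1 t C_2(t,K)\E[\bar f\log\bar f]+\f 1 t C_2(t,K)c(\delta_0)$, which is the second line of the asserted bound; part~(1) applied similarly yields the first line $\f 1{\sqrt t}(c_1\E[\bar f\log\bar f]+c_2+c_3 A)$. Hypothesis \underline{\bf C2} supplies everything that Lemmas~\ref{exponential-estimates} and~\ref{estimate-10} demand, namely $|\rho^h|\le K$, bounded $\|\Rc\|$, and the quadratic growth $\|\Theta^h\|^2\le c+\delta r^2$ driving the exponential integrability of $W_t^{(2)}$, while \underline{\bf C1(c)} validates Theorem~\ref{second-order} itself.

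For the two Duhamel pieces I exploit that $V\ge 0$ and $V(x_0)=0$ give the pointwise bound $|\V_{t-r,t}|\le |V|_\infty$ and the $L^2$ bound $\E|\V_{t-r,t}|^2\le |V|_\infty P_{t-r}^h V(x_0)$ (the exponential factor is at most $1$). For the $N_{t-r}$-integral, Cauchy-Schwarz together with $\|N_{t-r}\|_{L^2}$ of order $1/(t-r)$---obtained from It\^o's isometry and $|W_s|^2\le e^{|K|s}$, exactly as in the proof of Lemma~\ref{V-integral}---gives the $|V|_\infty^{1/2}\int_0^t (t-r)^{-1}\sqrt{P_{t-r}^h V(x_0)}\,dr$ contribution. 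For the $W_s^{(2)}$-integral, It\^o's isometry on the martingale $\int_0^{(t-r)/2}\<d\{x_s\},W_s^{(2)}\>$ combined with the uniform $L^2$-bound $\sup_{s\le t}\E\|W_s^{(2)}\|^2<\infty$ from Lemma~\ref{moments-10}(a) (again guaranteed by \underline{\bf C2}) controls it by $C|V|_\infty(t-r)^{-1/2}\sup_s\E\|W_s^{(2)}\|^2$, whose $r$-integral delivers the $\sqrt t$ factor and the last line of the bound.

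The main technical obstacle is that each Duhamel integrand carries an apparent singularity at $r=t$ of order $1/(t-r)$; it is the Cauchy-Schwarz pairing with the appropriate factor---$\sqrt{P_{t-r}^h V(x_0)}$ arising from $V(x_0)=0$ for the $N$-integral, and the martingale standard deviation $\sqrt{t-r}$ for the $W^{(2)}$-integral---that reduces each singularity to the integrable orders displayed in the claim. Tracking the explicit values of $c_1,c_2,c_3,C_2(t,K),c(\delta_0),A$ is purely bookkeeping from Stroock's inequality (Lemma~\ref{Stroock's-Lemma}) inside the proof of Lemma~\ref{estimate-10}, and has already been done there.
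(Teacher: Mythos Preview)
Your proposal is correct and follows essentially the same route as the paper's own proof: invoke Theorem~\ref{second-order} under $V(x_0)=0$, normalise by $P_t^hf(x_0)$ and feed the two endpoint terms into Lemma~\ref{estimate-10}, then handle the two Duhamel integrals by pairing $|f|_\infty$ with, respectively, Cauchy--Schwarz against $\|N_{t-r}\|_{L^2}\lesssim(t-r)^{-1}$ and $\|\V_{t-r,t}\|_{L^2}\le|V|_\infty^{1/2}\sqrt{P_{t-r}^hV(x_0)}$, and It\^o isometry on the $W^{(2)}$-martingale together with $\sup_s\E|W_s^{(2)}|^2<\infty$. The paper phrases the first Duhamel estimate as ``H\"older and Burkholder--Davis--Gundy'' and writes $(\E|V(x_{t-r})|)^{1/2}$ for your $\sqrt{P_{t-r}^hV(x_0)}$, but these are the same quantities and the same argument.
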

\begin{proof}
Under the conditions of the theorem, the Hessian formula in Theorem \ref{second-order} holds.
For the first two terms on the right hand side, we divide it by $P_t^h f(x_0)$ and apply  Lemma \ref{estimate-10},
$$\begin{aligned}&\E\left[ f(x_t)N_{ t}\right]
+ \E \left[ f(x_t)\f 2 t\int_0^{t/2} \<d\{x_s\},  W_{s}^{(2)}(v_1, v_2)\> \right]\\
\le{}& \f {P_t^hf(x_0)} {\sqrt t } \left( c_1\E \left[\f {f(x_t)} {P_t^h f(x_0)}\log \left(\f {f(x_t)} {P_t^h f(x_0)}\right)\right] +c_2+c_3A\right)\\
 &+\f {{P_t^hf(x_0)} } t \left(c_2(t,K) \E \left[\f {f(x_t)} {P_t^h f(x_0)}\log \left(\f {f(x_t)} {P_t^h f(x_0)}\right)\right]+\f 1t C_2(t,K) c(\delta_0)
 \right).
\end{aligned}
$$ 
These two terms are of order $\f 1t $ and $\f 1 {\sqrt t}$ (small time) respectively.
For the fourth term, we must use regularity of $V$ to compensate for the singularity in time. Let us apply H\"older and Burkholder-Davis-Gundy inequality
to see that
$$\left|\int_0^t  \E\left[ f (x_t)\V_{t-r, t}N_{ t-r}\right]dr\right|
\le  c|f|_\infty (|V|_\infty)^{\f 12} \int_0^t  (|V  (x_{t-r})|_{L^1(\Omega)}) ^{\f 12}\f {1} {t-r}dr.$$
for some constant $c$, and also,
$$\begin{aligned}
&\int_0^t  
\E \left[f (x_t)\f {2 \V_{t-r, t} } {t-r}\int_0^{(t-r)/2} \<d\{x_s\}, W_{s}^{(2)}(v_1,v_2)\> \right] dr\\
&\le c |f|_\infty |V|_\infty \sqrt t\sup_{s \le t}\E | W_{s}^{(2)}(v_1,v_2)|^2.
\end{aligned}
$$ 
The proof is complete.
\end{proof}

This leads to estimates on the Hessian of the Feynman-Kac kernel.   \begin{corollary}
 \label{corollary-kernel-estimates-1}
 Suppose  \underline{\bf C2} and  \underline{\bf C1(c)}.  Then,
$$\begin{aligned}
\left|\f{\nabla d p_{2t}^{h} }{p_{2t}^h }\right|_{x_0}
\le{}& \f 1 {\sqrt t } \left( c_1 \sup_{y\in M} \left( \log  \frac{p^{h}_t(y,y_0)}{p^{h}_{2t}(x_0,y_0)}\right)
 +c_2+c_3A\right)\\
 &+\f 1 t c_2(t,K)\sup_{y\in M} \left( \log  \frac{p^{h}_t(y,y_0)}{p^{h}_{2t}(x_0,y_0)}\right).
\end{aligned}
$$
\end{corollary}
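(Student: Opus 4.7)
The plan is to deduce this corollary directly from Theorem~\ref{Hessian-estimates-thm1} by specialising to $V\equiv 0$ and choosing the initial datum $f$ to be the heat kernel itself. Setting $V\equiv 0$ kills the last two lines of the estimate in Theorem~\ref{Hessian-estimates-thm1} (the terms carrying $|V|_\infty$ and $\int_0^t \sqrt{P_{t-r}|V|}\,dr\,/(t-r)$), leaving only the two entropy-type terms of orders $1/\sqrt{t}$ and $1/t$. What remains is to translate the entropy of $f(x_t)/P_t^h f(x_0)$ into the kernel ratio appearing in the statement.

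Concretely, I take $f(y)=p_t^h(y,y_0)$. The semigroup (Chapman--Kolmogorov) identity gives
\begin{equation*}
P_t^h f(x_0)=\int_M p_t^h(x_0,y)\,p_t^h(y,y_0)\,dy=p_{2t}^h(x_0,y_0),
\end{equation*}
and, pulling the Hessian (acting in the first variable) inside the convolution, $\nabla d\,P_t^h f(x_0)=\nabla d\,p_{2t}^h(x_0,y_0)$. This exchange is justified by the local Gaussian-type bounds on the Hessian of $p_t^h$ given by the basic Hessian formula (Theorem~\ref{hessian-lemma}), together with a standard approximation of $p_t^h(\cdot,y_0)$ by $BC^2$ functions via a compactly supported cutoff. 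Dividing by $P_t^h f(x_0)=p_{2t}^h(x_0,y_0)$ produces precisely $\nabla d\,p_{2t}^h/p_{2t}^h$ at $x_0$ on the left-hand side.

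For the right-hand side, the entropy term in Theorem~\ref{Hessian-estimates-thm1} has the form $\E[\phi \log \phi]$ with
\begin{equation*}
\phi(x_t)=\frac{p_t^h(x_t,y_0)}{p_{2t}^h(x_0,y_0)}, \qquad \E[\phi(x_t)]=1.
\end{equation*}
Since $\phi$ is nonnegative with mean one under the law of $x_t$, the expectation is a $\phi$-weighted average of $\log \phi$, hence bounded above by $\sup_y \log \phi(y)=\sup_{y\in M}\log(p_t^h(y,y_0)/p_{2t}^h(x_0,y_0))$. Substituting this bound into the two entropy terms of Theorem~\ref{Hessian-estimates-thm1} yields exactly the right-hand side claimed in the corollary (the universal constant $\tfrac{1}{t}C_2(t,K)c(\delta_0)$ is absorbed into $c_2$).

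The only real obstacle is the technical one: $p_t^h(\cdot,y_0)$ is not \emph{a priori} in $BC^2$ nor bounded near $y_0$, so Theorem~\ref{Hessian-estimates-thm1} does not apply to it directly. I would handle this by a standard truncation-plus-approximation argument: replace $p_t^h(\cdot,y_0)$ by $f_\epsilon(y)=p_{t-\epsilon}^h(\cdot, \cdot)$ integrated against a bounded mollifier at $y_0$ (equivalently, apply the theorem to $P_\epsilon^h f_0$ for a bounded continuous compactly supported $f_0$ approximating $\delta_{y_0}$), and then let $\epsilon\downarrow 0$, using the already established short-time boundedness of $W_s^{(2)}$ (Lemma~\ref{moments-10} under \underline{\bf C2}) together with standard parabolic regularity to pass to the limit on both sides. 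This is routine once the entropy bound above is in place.
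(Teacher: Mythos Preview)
Your proposal is correct and follows essentially the same approach as the paper: take $V=0$ and $f=p_t^h(\cdot,y_0)$ in Theorem~\ref{Hessian-estimates-thm1}, use $P_t^h f(x_0)=p_{2t}^h(x_0,y_0)$, and bound the entropy term by $\sup_{y\in M}\log\bigl(p_t^h(y,y_0)/p_{2t}^h(x_0,y_0)\bigr)$ via the mean-one property. You are in fact more careful than the paper in flagging the $BC^2$/boundedness issue for $p_t^h(\cdot,y_0)$ and sketching the approximation argument; the paper applies the theorem directly without comment.
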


\begin{proof}
In the theorem above, take $V=0$ and $f=p_t^h(x,y_0)$ where $y_0$ is a fixed point.  Set
${\mathcal H_t} (f,x_0) $
Note that
 $P_t^h (p_t^h(\cdot, y_0))(x)=p_{2t}^h(x,y_0)$ we see that
$$\begin{aligned}
\f{\left|\nabla dp_{2t}^{h} (\cdot, y_0)\right|_{x_0}}{p_{2t}(x_0, y_0)}
 \le{}& \f 1 {\sqrt t } \left( c_1\E \left[\f {p_t^h(x_t, y_0)} {p_{2t}^h(x_0, y_0)}\log \left(\f {p_t^h(x_t, y_0)} {p_{2t}^h(x_0, y_0)}\right)\right] +c_2+c_3A\right)\\
 &+\f 1 t c_2(t,K) \E \left[\f {p_t^h(x_t, y_0)} {p_{2t}^h(x_0, y_0)}\log \left(\f {p_t^h(x_t, y_0)} {p_{2t}^h(x_0, y_0)}\right)\right]. \end{aligned}$$ 
Observe that,
\begin{equation}\label{ineq-hbndestproof}
\begin{aligned}
&{\mathcal H_t} (p_t^h(\cdot, y_0),x_0)
\equiv \E \left[\f {p_t^h(x_t, y_0)} {p_{2t}^h(x_0, y_0)}\log \left(\f {p_t^h(x_t, y_0)} {p_{2t}^h(x_0, y_0)}\right)\right] \\
& \leq \sup_{y\in M} \left( \log  \frac{p^{h}_t(y,y_0)}{p^{h}_{2t}(x_0,y_0)}\right) 
\E \left[\f {p_t^h(x_t, y_0)} {p_{2t}^h(x_0, y_0)}\right]
\leq \sup_{y\in M} \left( \log  \frac{p^{h}_t(y,y_0)}{p^{h}_{2t}(x_0,y_0)}\right).
\end{aligned}
\end{equation}
This completes the proof.
\end{proof}

If the fundamental solution $p_t^h$ satisfies an on diagonal upper bound for the kernel of the following form:  $p_t^h(y, y_0) \le \lambda(t)$,
where $\lambda(t)$ satisfies the doubling condition $\f {\lambda (t)}{\lambda(2t)} \le C$, 
 and a lower bound of the form
 $p_t^h(x_0, y_0) \ge  c_1\lambda(t) e^{-d^2(x_0, y_0)/c_2t}$, then for a constant $c$, we  have
 $$ \sup_{y\in M} \left( \log  \frac{p^{h}_t(y,y_0)}{p^{h}_{2t}(x_0,y_0)}\right)\le c+ c\f{d^2(x_0, y_0)}{t},$$ 
and the familiar type estimates
$$\f{ \left|\nabla d p_{2t}^{h} (x_0, y_0)\right|}{p_{2t}^h (x_0, y_0)}\le c \f{d^2(x_0, y_0)} {t} +c \f 1 t.$$
Such inequalities for strict elliptic operators on $\R^n$ are known to Aronson \cite{Aronson}, and to manifolds
  \cite[Thm. 2.2]{Li-Yau}. See also \cite{Varopoulos,Fabes-Stroock}. In this paper we do not elaborate this in detail
  and  we refer to the following papers on heat kernel and other related estimates:
\cite{LDUP,Davies-Gaussian-bound,Norris97-long,Grigoryan99,Shubin-99,Hebisch-Saloff-Coste,Bakry-Qian,Arnaudon-Thalmaier-Wang,XDLi-Liouville,Norris-Stroock,Zhang-01}.  See also \cite{Li-Xu} for differential {H}arnack inequalities 
and \cite{Barlow-Grigoryan-Kumagai} for the study in the context of Dirichlet form.

\section{Manifolds with a pole and semi-classical Riemannian bridges}
 \label{section-pole}
Let $y_0$ be a pole for $M$. We denote by $J_{y_0}$ 
the Jacobian determinant of the exponential map $\exp_{y_0}: T_{y_0}M\to M$ at $y_0$:
$$J_{y_0}= | \det D_{\exp_\pole^{-1}(y)} \exp_\pole|.$$
 
The subscript $y_0$ will be omitted from time to time.
Set $\Phi(y)=\frac{1}{2}J_{y_0}^{\frac{1}{2}}(y)\Delta J_{y_0}^{-\frac{1}{2}}(y)$.  For $T>0$ fixed, a semi-classical bridge (also called semi-classical Riemannian bridge) $\tilde{x}_s$
is a time dependent diffusion with generator $\frac{1}{2}\triangle + \nabla \log k_{T-s}(\cdot, y_0)$  where, for $d$  the Riemannian distance function,
 $$k_t(x_0, y_0):=(2\pi t)^{-\f n 2}e^{-\f {d^2(x_0, y_0)}{2t}}J^{-\f 12}(x_0).$$
 The radial process $r_t=d(\tilde x_t, y_0)$ of the semi-classical bridge  is the $n$-dimensional  Bessel bridge. On $\R^n$, the semi-classical  bridge agrees with the Brownian motion conditioned to be at $y_0$ at time $T$. The semi-classical bridge are introduced in \cite{Elworthy-Truman-81}, K. D. Elworthy and A. Truman who proved the following formula, \begin{equation}\label{elaform}
p_T^V(x_0,y_0) = k_T(x_0,y_0) \E \left[ e^{\int_0^T \left(\Phi-V\right)(\tilde{x}_s)\,ds} \right],
\end{equation}
under the condition that $\Phi-V$ is bounded from above. 
Their consideration comes from classical mechanics. Their method to overcome the singularity at time $T$ 
is to overshoot the target by a drift $\nabla \log k_{T+\epsilon -t}$. 
In \cite{Li-ibp-sc, Li-Thompson}, we considered the semi-classical bridge on $[0, t]$ where $t<T$ on which the
distributions of the semi-classical bridge process and the $h$-Brownian motion are equivalent,
and use Girsanov transform to  prove this formula
and a gradient formula for the Feynman-Kac semi-groups. In those cases it is not difficult to take the limit $t$ to $T$. 

Let us define
 $$\Phi^h= - \f 1 2 |\nabla h|^2-\f 1 2\Delta h+\Phi.$$
Let $\tilde{u}_s$ denote the solution to the following canonical horizontal SDE on the orthonormal frame bundle
\begin{equation*}
d \tilde{u}_s = \sum_{i=1}^n  H_i(\tilde{u}_s)\circ dB^i_s+\h_{\tilde u_s} \left({ \nabla \log k_{T-s}}(\pi(\tilde{u}_s))\right)\,ds\nonumber
\end{equation*}
with $\tilde{u}_0 =u_0$ and $\widetilde{\nabla \log k_{T-s}}$ indicates the horizontal lift of  $\nabla \log k_{T-s}$, then $\tilde{x}_s = \pi(\tilde{u}_s)$ is a semi-classical bridge with initial value $x_0=\pi(u_0)$. 
Let $(u_t)$ be the solution to the canonical horizontal SDE (\ref{canonical-1}).

\begin{lemma}[Lemma 3.2 \cite{Li-Thompson}]\label{lemma-Girsanov}
Suppose that $h\in C^2(M; \R)$ and that the $h$-Brownian motion $x_t$
is complete.
 Fix $t \in \left[0,\term\right)$. Then the probability distributions of $u_t$ and $\tilde u_t$ are equivalent  on $\F_t$. If $F$ is a function on the path space $C([0,t];M)$ then $\E F(u_\cdot)=\E F(\tilde u_\cdot M_t)$ where
 \begin{equation}
 \label{cm}
M_t:=e^{h(\tilde x_t)-h(x_0)}\f {k_\term (x_0, y_0) } {k_{\term-\tm} (\tilde{x}_\tm, y_0) }\exp\left[  \int_0^\tm \Phi^{h}(\tilde{x}_\stm) \,d\stm\right].
\end{equation}
\end{lemma}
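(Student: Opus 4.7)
The plan is to recognise this as a Girsanov change of measure between the laws of $u$ and $\tilde u$ on $C([0,t]; OM)$, then to identify the resulting Radon-Nikodym density with $M_t$ by applying It\^o's formula and invoking the Elworthy-Truman elementary relation satisfied by $k_\tau(\cdot, y_0)$. Both $u$ and $\tilde u$ solve the canonical horizontal SDE with the same noise term $\sum_i H_i(\cdot) \circ dB^i$; they differ only in their drifts, $\h(\nabla h)$ versus $\h(\nabla \log k_{T-s}(\cdot, y_0))$, which is precisely the Girsanov set-up.

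I introduce the $\R^n$-valued process $\phi_s^i := \langle (\nabla \log k_{T-s} - \nabla h)(\tilde x_s), \tilde u_s e_i \rangle$ and set
$$N_t := \exp\left(-\int_0^t \phi_s\cdot dB_s - \tfrac12 \int_0^t |\phi_s|^2 \, ds\right).$$
Once $N_t$ is verified to be a true martingale on $[0,t]$, Girsanov's theorem yields that $\tilde B_s := B_s + \int_0^s \phi_u \, du$ is a Brownian motion under $\tilde\PP$ defined by $d\tilde\PP/d\PP|_{\F_t} = N_t$; substituting this back into the SDE for $\tilde u$ shows that $\tilde u$ solves the canonical horizontal SDE with drift $\h(\nabla h)$ under $\tilde\PP$, hence is an $h$-horizontal Brownian motion. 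Uniqueness in law then gives $\E[F(u_\cdot)] = \E[F(\tilde u_\cdot)\, N_t]$, which yields both the equivalence of distributions on $\F_t$ and the change-of-measure identity.

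The substantive step is identifying $N_t$ with $M_t$. I would apply It\^o's formula to $g(s, x) := h(x) - \log k_{T-s}(x, y_0)$ along $(\tilde x_s)$; this is legitimate because the pole assumption makes $k_{T-s}(\cdot, y_0)$ smooth on all of $M$ for each $s < T$. Using the generator $\tfrac12 \Delta + \nabla \log k_{T-s}$ of the semi-classical bridge, the expansion reads
$$g(t, \tilde x_t) - g(0, x_0) = \int_0^t \langle \nabla g, \tilde u_s\, dB_s\rangle + \int_0^t \left[\partial_s g + \tfrac12 \Delta g + \langle \nabla g, \nabla \log k_{T-s}\rangle\right](\tilde x_s)\, ds.$$
The left side equals $h(\tilde x_t) - h(x_0) + \log k_T(x_0, y_0) - \log k_{T-t}(\tilde x_t, y_0)$, and the stochastic integral is exactly $-\int_0^t \phi_s \cdot dB_s$, so it remains only to show that the drift integral plus $\int_0^t \Phi^h(\tilde x_s)\, ds$ collapses to $-\tfrac12 \int_0^t |\phi_s|^2 \, ds$. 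The key input here is the Elworthy-Truman identity $\partial_\tau k_\tau = \tfrac12 \Delta k_\tau - \Phi\, k_\tau$, equivalently
$$\partial_s(-\log k_{T-s}) = \tfrac12 \Delta \log k_{T-s} + \tfrac12 |\nabla \log k_{T-s}|^2 - \Phi.$$
Substituting this together with $\Phi^h - \Phi = -\tfrac12 |\nabla h|^2 - \tfrac12 \Delta h$ and expanding $|\phi_s|^2 = |\nabla \log k_{T-s}|^2 - 2\langle \nabla \log k_{T-s}, \nabla h\rangle + |\nabla h|^2$, the $\Delta h$, $\Delta \log k_{T-s}$, and cross-inner-product terms cancel in a straightforward way, leaving precisely $-\tfrac12 |\phi_s|^2$.

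The main obstacle I anticipate is establishing the true-martingale property of $N_t$ on $[0, t]$ for $t < T$. Although $\nabla \log k_{T-s}$ develops an $\f{d(\tilde x_s, y_0)}{T-s}$-type singularity as $s \uparrow T$, restricting to $s \le t < T$ gives a pointwise bound of the form $|\phi_s|^2 \le C(T-t)^{-2} d^2(\tilde x_s, y_0) + 2|\nabla h|^2(\tilde x_s)$, and the $n$-dimensional Bessel-bridge structure of the radial process of $\tilde x$, together with completeness of the $h$-Brownian motion, supplies enough integrability to check a Novikov-type condition after a standard localisation. The algebraic identification in the It\^o step is mechanical once the Elworthy-Truman identity is available.
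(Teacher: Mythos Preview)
Your approach is essentially the same as the paper's: both apply Girsanov's theorem to the drift difference $\nabla\log(k_{T-s}e^{-h})$ between the two horizontal SDEs, then identify the density via It\^o's formula applied to $\log(e^{-h}k_{T-t})(\tilde x_t)$ (your $-g$). The only cosmetic difference is that you package the algebraic cancellation through the Elworthy--Truman PDE $\partial_\tau k_\tau=\tfrac12\Delta k_\tau-\Phi k_\tau$, whereas the paper writes out the identity $\tfrac12|\nabla\log(k_{T-s}e^{-h})|^2+(\partial_s+\tfrac12\Delta^h)\log(k_{T-s}e^{-h})=\Phi^h$ directly; these are the same computation. On the true-martingale issue your sketch is adequate: since the target diffusion (the $h$-Brownian motion) is assumed conservative, the localised Girsanov densities $N_{t\wedge\tau_k}$ have unit expectation and the standard non-explosion argument upgrades $N_t$ from a supermartingale to a martingale without needing a global Novikov bound on $|\nabla h|$.
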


This result follows from the identity $\frac{1}{2}\Delta^h +\nabla \log (k_{T-s}e^{-h})=\frac{1}{2}\Delta +\nabla (\log k_{T-s})$,   Girsanov's theorem for SDEs states that \begin{equation*}
M_t = \exp \left[ - \sum_{i=1}^m\int_0^\tm  \<\nabla \log (k_{\term-\stm} e^{-h}) (\tilde x_\stm), \tilde  u_\stm e_i\> dB_\stm^i -\f 12 \int_0^\tm | \nabla \log (k_{\term-\stm}e^{-h}) |_{(\tilde x_\stm)}^2 d\stm\right],
\end{equation*}
which reduced to (\ref{cm}) by an application of  It\^o's formula for $\log {(e^{-h}k_{\term-\tm})(\tilde{x}_{\tm}) }$ and
the following identities:
 \begin{equation*}
\begin{aligned}
&\f 1 2|\nabla \log (k_{\term-\stm}e^{-h})|^2 + \left( \f {\partial} {\partial s} +\f 12\Delta^h +Z\right)\log( k_{T-s}e^{-h})\\
 &=\f 18 |\nabla \log J|^2+\f 1 2 |\nabla h|^2-\f 1 4 \Delta(\log J)-\f 1 2\Delta h-|\nabla h|^2+\<Z, \nabla \log (k_{T-s} e^{-h})\>\\
 &=\f 18 |\nabla \log J|^2-\f 1 4 \Delta(\log J)-\f 1 2\Delta h-\f 12 |\nabla h|^2+\<Z, \nabla (\log k_{T-s}e^{-h})\>.
\end{aligned}
\end{equation*}

\subsection{Basic Estimates}
\label{section-estimate-2}

 We intend to give derivative versions of the elementary formula (\ref{elaform}), aiming to obtain Hessian estimates
 for $\log p_t^h$ and $p_t^h$. For this we set $\f {D}{dt}\tilde W_t := \paral_t \frac{d}{dt}  \paral_t^{-1} \tilde W_t$, where $\parals_t$ indicates stochastic parallel transport along $\tilde x_\cdot$.  If $\tilde x_t=\pi(\tilde u_t)$ then $\parals_t=\tilde u_t \tilde u_0^{-1}$. If $\tilde x_t$ is defined to be the solution of the
 gradient SDE, then if we identify $\R^n$ with $T_{x_0}M$, $\parals_t$ is given by the solution of the horizontal lift of the gradient SDE to the orthonormal frame bundle.
 
 We denote by $\tilde W_t(v_0): T_{x_0}M\to T_{\tilde x_t}M$ the solutions to the following equation along the semi-classical bridge $\tilde x_t$,
\begin{equation}\label{damped-semi}
\f {D} {dt}\tilde W_t=-\f 12  {\Ric}_{\tilde x_t}^{\#}(\tilde W_t)+(\nabla^2h)^\sharp _{\tilde x_t}(\tilde W_t), \quad \tilde W_0 = {\id}_{T_{x_0}M}
\end{equation}

To obtain a formula for the Hessian, let us consider the triple, where $\tilde x_t$ is a semi-classical Riemannian bridge and $(\tilde x_t, \tilde W_t, \tilde W_t^{(2)})$ is the solution to the following system of equations:
\begin{equation}\label{system-equs-2}
\begin{aligned}
\f {D} {dt }\tilde{W}_t=&\left( -\f 12  {\Ric}_{\tilde x_t}^{\#}+(\nabla^2 h)^\sharp\right)(\tilde{W}_t), \quad \tilde W_0 = v_1.
\\
{D} \tilde W_t^{(2)}
=&\left(- \f 12 {\Ric}_{\tilde x_t}^{\sharp}   +(\nabla^2 h)^\sharp_{\tilde x_t}\right) \left(\tilde W_t^{(2)} \right)dt- \f 12 \Theta^h\left( \tilde W_t(v_2)\right) (\tilde  W_t(v_1))dt\\
&+ R( d\{\tilde x_t\}, \tilde W_t(v_2)  )\tilde W_t(v_1), \quad \tilde W_0^{(2)}=v_2
\end{aligned}
\end{equation}
 where $\{\tilde x_s\}$ is the martingale part of $\tilde x_t$.
 The  tilde signs over $W_t$ and $W_t^{(2)}$ indicate that these transport processes are along the semi-classical bridge.
 We use $\tilde W_t(v_1)$ and $W_t^{(2)}(v_1, v_2)$ for the solution with initial value $v_1, v_2\in T_{x_0}M$,
 and $W_t, W_t^{(2)}$ are linear maps from $T_{x_0}M$ to $T_{\tilde x_t}M$, where $\tilde x_0=x_0$.
Occasionally, when there is no risk of ambiguity,  the solutions are also denoted as $W_t, W_t^{(2)}$ (the the initial values are supressed) for simplicity.

Before proceeding to prove the Hessian formula, we make elementary estimations and justify taking the limit $t$ to $T$ in integrals, appearing in the Hessian formula,  of functionals of $\tilde x_t$ from $0$ to $t$. The following elementary inequalities will be used frequently.
\begin{lemma}
\label{elementary-lemma}
Let $p>0$. For a constant $c(p)$, the following estimates hold.
\begin{equation*}
\begin{aligned}
&\sup_{0\le t \le T}\f 2t \int_0^{\f t 2} \f {s^{\f p2} }{(T-s)^{\f p2}T^{\f p 2}}ds \le c(p)T^{-\f p2}, \;
\sup_{0\le t \le \f T 2}\f 2t \int_{\f t 2}^t \f {s^{\f p2} }{(T-s)^{\f p2}T^{\f p 2}}ds \le c(p)T^{-\f p2},\\
&\sup_{\f T 2\le t \le T}\f 2t \int_{\f t 2}^t \f {s^{\f p2} }{(T-s)^{\f p2}T^{\f p 2}}ds
\le c(p)T^{-p/2},\quad \hbox{ for } p<2.
\end{aligned}
\end{equation*}
\end{lemma}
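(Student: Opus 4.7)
The plan is to treat the three bounds case by case, noting that the distinguishing feature is whether the integration variable $s$ can approach the singular point $s=T$. In the first two estimates $s$ is confined to $[0, T/2]$, so $(T-s)^{-p/2}$ is uniformly controlled and only trivial pointwise bounds on $s^{p/2}$ are needed; in the third, $s$ can approach $T$, so the singularity of $(T-s)^{-p/2}$ must be handled by explicit integration, which is precisely why $p<2$ is required.

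For the first inequality, since $s \in [0, t/2]$ and $t \le T$, I would use $T - s \ge T/2$ together with $s^{p/2} \le (t/2)^{p/2}$ to bound the integrand pointwise and then multiply by the length $t/2$; the prefactor $\f 2 t$ cancels, giving $(t/2)^{p/2}/[(T/2)^{p/2} T^{p/2}] = t^{p/2}/T^{p} \le T^{-p/2}$ using $t\le T$. The second inequality is essentially the same: since $t \le T/2$ one still has $s \le T/2$ and thus $T - s \ge T/2$, so $s^{p/2} \le t^{p/2}$ and integration length $t/2$ yield the same bound, with constant $c(p) = 1$ in both cases.

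For the third inequality the range $t \in [T/2, T]$ allows $s$ to reach $T$, so I plan to bound $s^{p/2} \le t^{p/2}$ and compute $\int_{t/2}^t (T-s)^{-p/2}\,ds = [(T-t/2)^{1-p/2}-(T-t)^{1-p/2}]/(1-p/2) \le T^{1-p/2}/(1-p/2)$, which is finite precisely because $p<2$. Combining with the prefactor $\f{2 t^{p/2-1}}{T^{p/2}}$ and the inequality $t^{p/2-1} \le (T/2)^{p/2-1}$ (valid since $p/2-1<0$ and $t\ge T/2$) produces the bound $c(p) T^{-p/2}$ with $c(p) = 2^{2-p/2}/(1-p/2)$. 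The only real obstacle is this third case: one must verify the integrability $\int_0^T (T-s)^{-p/2}\,ds < \infty$, which is exactly the hypothesis $p<2$, and the constant indeed blows up as $p\uparrow 2$ as expected.
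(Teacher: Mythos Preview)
Your proof is correct in all three cases. The paper does not supply a proof for this lemma, treating it as elementary, so there is nothing to compare against; your argument is the natural one, using the crude bounds $T-s\ge T/2$ and $s^{p/2}\le t^{p/2}$ when $s$ stays away from $T$, and explicit integration of $(T-s)^{-p/2}$ (requiring $p<2$) when it does not.
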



\begin{lemma}\label{basic-lemma}
\begin{enumerate}
\item [(1)] For every $p\ge 1$ and $T$ there exists a constant $\delta>0$ such that  if $\Phi^h \le C+\delta d^2(\cdot, y_0)$ for some $C$, then  $\left( e^{ \int_0^t \Phi^{h}(\tilde{x}_s) \,ds}, t\le T\right)$ is $L^p$ bounded.
\item [(2)] For some number $c(p,n)$, depending only on $p$ and $n$,
   $$\E d^p(\tilde x_s, y_0)\le c(p)(T-s)^p\left( \f {d(x_0, y_0) }{T}\right)^p+c(p,n) \left(\f {s(T-s)}T\right)^{\f p 2},$$ 
For any $\delta>0$ and $s\in [0,T)$, $$\begin{aligned}
&\E \left|\nabla \log k_{T-s} (\tilde x_s,y_0)\right|^2\\
& \le  \f 14(1+\delta^{-1}) \E|\nabla \log J(\tilde x_s)|^2
+(1+\delta) \left( \f {d^2(x_0, y_0) }{T^2}+ \f {ns}{T(T-s)}\right).
\end{aligned}$$

\item [(3)]  Set $c_{p,J}=\sup_{0\le s \le T} |\nabla \log J^{- \f 12}(\tilde x_s)|_{L^p(\Omega)}$. There exist constants $c(p)$ and $c(p, n)$ such that 
$$\begin{aligned}&  \E \left(  \f 1{t-r}\int_r^t |\nabla  \log( k_{T-s}(\tilde x_s, y_0)|^p ds\right) 
\le   (2C_{p,J})^p +c(p)\f {d^p(x_0, y_0) }{T^p}
+  c(p,n) F(p,r,t),
\end{aligned}$$
where  
\begin{equation}\label{basic}
F(p,r,t):=\f 1{t-r}\int_r^t  \f {s^{\f p2} }{(T-s)^{\f p2}T^{\f p 2}}ds.\end{equation}
Moreover for any $p>0$,  \begin{equation}
\label{integrated-6.6}
\sup_{0\le t \le T}\E\left( \f 2 t \int_0^{\f t 2}  \left|\nabla \log k_{T-s} (\tilde x_s,y_0)\right|^p ds \right)
 \le  (2C_{p,J})^p   +c(p)\f {d^p(x_0, y_0) }{T^p}+c(p,n)  T^{- p/2},
\end{equation}
and for $p<2$,
 $$\sup_{0\le t\le T}\E \left(\f 2 t \int_{\f t 2}^t  \left|\nabla \log k_{T-s} (\tilde x_s,y_0)\right|^p ds\right)
\le (2C_{p,J})^p +c(p)\f {d^p(x_0, y_0) }{T^p}+c(p,n)  T^{- p/2}.$$

\end{enumerate}
\end{lemma}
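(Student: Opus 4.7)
The cornerstone is the defining property of the semi-classical bridge on which we already rely: the radial process $r_s := d(\tilde x_s, y_0)$ is the $n$-dimensional Bessel bridge from $d(x_0,y_0)$ to $0$ on $[0,T]$, and hence has the same law as $|Y_s|$ where $(Y_s)_{s\in[0,T]}$ is a Brownian bridge in $\R^n$ from $a=\exp_{y_0}^{-1}(x_0)$ to the origin. In particular each $Y_s$ is Gaussian with mean $\mu_s=\frac{T-s}{T}a$ (so $|\mu_s|=\frac{T-s}{T}d(x_0,y_0)$) and covariance $\sigma_s^2 I_n$ with $\sigma_s^2=\frac{s(T-s)}{T}$, so every moment and small exponential moment of $r_s$ is controlled via the explicit Gaussian identity
\begin{equation*}
\E e^{\lambda |Y_s|^2}=(1-2\lambda\sigma_s^2)^{-n/2}\exp\!\left(\frac{\lambda|\mu_s|^2}{1-2\lambda\sigma_s^2}\right), \qquad 2\lambda\sigma_s^2<1.
\end{equation*}

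For Part (2), writing $Y_s=\mu_s+\sigma_s Z$ with $Z$ standard Gaussian in $\R^n$ and using $(|\mu_s|+\sigma_s|Z|)^p\le 2^{p-1}(|\mu_s|^p+\sigma_s^p|Z|^p)$ produces the first moment bound on $d^p(\tilde x_s,y_0)$, with $c(p,n)=2^{p-1}\E|Z|^p$. For the gradient of $\log k_{T-s}$, a direct computation from the definition of $k_t$ gives
\begin{equation*}
\nabla_x\log k_{T-s}(x,y_0)=-\frac{d(x,y_0)\,\nabla d(x,y_0)}{T-s}-\tfrac{1}{2}\nabla\log J(x),
\end{equation*}
so that, since $|\nabla d|=1$, Young's inequality yields the pointwise bound $|\nabla\log k_{T-s}|^2\le (1+\delta)\frac{r_s^2}{(T-s)^2}+\tfrac{1+\delta^{-1}}{4}|\nabla\log J|^2$. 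Taking expectation and inserting $\E[r_s^2]/(T-s)^2\le d^2(x_0,y_0)/T^2+ns/(T(T-s))$ gives the second assertion of Part (2).

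Part (3) parallels Part (2). The same gradient decomposition together with $(A+B)^p\le 2^{p}(A^p+B^p)$ gives $|\nabla\log k_{T-s}|^p\le 2^{p}\bigl(r_s^p/(T-s)^p+|\nabla\log J^{-1/2}|^p\bigr)$. Taking expectation, the Jacobian term is controlled by $(2c_{p,J})^p$ by definition of $c_{p,J}$, and the $r_s^p$ term is dominated, via the first part of (2), by $c(p)d^p(x_0,y_0)/T^p+c(p,n)s^{p/2}/((T-s)^{p/2}T^{p/2})$; integrating over $[r,t]$ and dividing by $t-r$ produces exactly the quantity $F(p,r,t)$ of (\ref{basic}). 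The three supremum estimates then follow directly from the explicit integral bounds for $F$ assembled in Lemma~\ref{elementary-lemma}, the last one needing the restriction $p<2$ in order to absorb the $(T-s)^{-p/2}$ singularity near the endpoint $s=T$.

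Part (1) is the delicate piece and the main obstacle. I would first use the hypothesis $\Phi^h\le C+\delta r^2$ to write $e^{p\int_0^t \Phi^h(\tilde x_s)\,ds}\le e^{pCT}e^{p\delta\int_0^t r_s^2\,ds}$, then apply Jensen's inequality to the convex function $e^x$ against the uniform probability measure on $[0,t]$ to obtain $e^{p\delta\int_0^t r_s^2\,ds}\le \tfrac{1}{t}\int_0^t e^{p\delta t\, r_s^2}\,ds$. Each expectation $\E e^{p\delta t\, r_s^2}=\E e^{p\delta t\,|Y_s|^2}$ is given by the Gaussian formula above, and is bounded uniformly in $s\in[0,T]$ provided $2p\delta t\,\sigma_s^2<1$; since $\sigma_s^2\le T/4$, it suffices to choose $\delta$ small enough that $p\delta T^2<2$. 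The delicate point is precisely this uniform-in-$t$ estimate: as $t\uparrow T$ the semi-classical drift in $r_s$ blows up, and only the Jensen-plus-Gaussian tradeoff, together with the freedom to shrink $\delta$ depending on $p$ and $T$, allows one to navigate the endpoint singularity, which matches the form of the conclusion.
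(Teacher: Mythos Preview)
Your proposal is correct and follows essentially the same route as the paper: identify the radial process $d(\tilde x_s,y_0)$ with the $n$-dimensional Bessel bridge (equivalently $|Y_s|$ for a Brownian bridge $Y$ in $\R^n$), use the explicit Gaussian moments $\E r_s^2 = \frac{(T-s)^2}{T^2}d^2(x_0,y_0)+\frac{ns(T-s)}{T}$ and their $p$-th order analogues, decompose $\nabla\log k_{T-s}=-\frac{d\,\nabla d}{T-s}+\nabla\log J^{-1/2}$, and then feed the resulting pointwise bounds into Lemma~\ref{elementary-lemma}. The only cosmetic difference is in Part~(1): the paper simply invokes Fernique's theorem for the Gaussian bridge to get $\sup_{s\le T}\E e^{C r_s^2}<\infty$ for small $C$, whereas you spell out the Jensen-in-time step and the explicit Gaussian identity to quantify the smallness condition on $\delta$; your version is more informative but not a different method.
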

\begin{proof}
 The $\R^n$ valued Brownian bridge is a Gaussian process,  the exponential of its square distance function is integrable in $L^a$ for $a<1/2$ (Fernique's theorem).
Since $d(\tilde x_s, y_0)$ is the $n$-dimensional Bessel bridge, $\sup_{s\le t}\E e^{Cd^2(\tilde x_s, y_0)}$ is finite for  $C$ sufficiently small, and any $p\ge 0$ and $T$,  by choosing sufficiently small $\delta$ we see that $\Phi^h$
$\left( e^{ \int_0^t \Phi^{h}(\tilde{x}_s) \,ds}, t\le T\right)$ is $L^p$ bounded, concluding part (1).

Let $c(p,n)$ denote a constant depending only on $p$ and $n$, varying from line to line. Using the explicit law of the $n$-dimensional Bessel bridge, we see that for some number $c(p,n)$,
   $$\E d^p(\tilde x_s, y_0)\le c(p)\left( \f {(T-s)d(x_0, y_0) }{T}\right)^p+c(p,n) \left(\f {s(T-s)}T\right)^{\f p 2}.$$ 
 Furthermore $\E d^2(\tilde x_t, y_0)=\f {(T-t)^2}{T^2} d^2(x_0, y_0)+\f {nt(T-t)} T$, so the second part of statement (2) follows from applying standard inequalities to the following identity
$$\left|\nabla \log k_{T-t}(\tilde x_t, y_0)\right |
=\left| \nabla \log J^{-\f 12}(\tilde x_t)+\f {(d\nabla d) (\tilde x_t, y_0)}{T-t}\right|.$$
We used the fact that $|\nabla d|=1$.   
 For any $p>0$, 
$$\begin{aligned}
&\E \left|\nabla \log k_{T-s} (\tilde x_s,y_0)\right|^p
\le  2^p \E|\nabla \log J^{-\f 12}(\tilde x_s)|^p
+2^p \f{\E\left|(\nabla d)d(\tilde x_s, y_0)\right|^p} {(T-s)^p}
\\  
&\le  2^p \E|\nabla \log J^{-\f 12}(\tilde x_s)|^p+c(p)
\f {d^p(x_0, y_0) }{T^p}+c(p,n)  \left(\f s {T(T-s)}\right)^{\f p 2},
\end{aligned}$$
and so the first inequality in the statement of part (3) follows:
$$\begin{aligned}& \E \left(  \f 1{t-r}\int_r^t |\nabla  \log( k_{T-s}(\tilde x_s, y_0)|^p ds\right)\\
&\le 2^p\sup_{r\le s \le t} \E|\nabla \log J^{- \f 12}(\tilde x_s)|^p +c(p)\f {d^p(x_0, y_0) }{T^p}
+  c(p,n) \f 1{t-r}\int_r^t  \f {s^{\f p2} }{(T-s)^{\f p2}T^{\f p 2}}ds.
\end{aligned}$$
Replacing $r=\f t 2$ and $t$ by $\f t 2$,  and apply the following inequality from Lemma \ref{elementary-lemma}
to conclude the last two inequalities.
 \end{proof}

For an orthonormal basis $\{e_i\}_{i=1}^n$ of $\R^n$, we define as following 
a family of independent one dimensional Brownian motions
on the probability space $(\Omega, \F, \F_t, {\mathbb Q})$ where ${\mathbb Q}$ is probability measure that is equivalent to $\mathbb P$  on 
each $\F_t$, where $t<T$, with the density $\f {d{\mathbb Q}}{d\mathbb P}=M_t$ where $(M_t)$ is given in Lemma \ref{lemma-Girsanov}. \begin{definition}\label{constants-1}
Set $\tilde B_t=(\tilde B_t^1, \dots, \tilde B_t^n)$, where
for $1\le i\le n$,
 $$\tilde B_t^i:=B_t^i +\int_0^t d \log( k_{T-s} e^{-h})(\tilde u_s e_i)\, ds.$$ 
 For  a number  $p>0$, the following notation will be used in the next lemma.
  \begin{equation*}
 \begin{array}{lll}
&b_1(2p)=\sup_{0\le s \le T} \left|\nabla \log J^{-\f 12}(\tilde x_s)\right|_{L^{2p}(\Omega)}, 
&b_2(2p)=\sup_{0\le s \le T} \left|\nabla h(\tilde x_s)\right|_{L^{2p}(\Omega)},\\
 &b_3(2)=\sup_{0\le s \le T} \left|\tilde W_s^{(2)}(v_1, v_2)\right|_{L^2(\Omega)}, 
&b_4(2p)= \sup_{0\le s \le T} \left|\tilde W_s^{(2)}(v_1, v_2)\right|_{L^{2p}(\Omega)},\\
&A=( (b_1(2p))^p+(b_2(2p))^p)^{\f 1p}b_4(2p).
 \end{array}
 \end{equation*}

\end{definition}
\begin{lemma}
\label{lemma8.4}
Assume \underline{\bf C4a, C4b}. Let $p\ge 1$. Then for a constant $c(p,n)$, depending only on $p$ and $n$, the following estimate holds for any $t\in (0, T]$,
\begin{equation}\label{pre-estimate}
 \begin{aligned}
 &\left|\f 2 t \int_0^{t/2} \<d\{\tilde x_s\},  \tilde W_{s}^{(2)}(v_1, v_2)\>\right|_{L^p(\Omega)}\\
&\le c(p,n)\left(b_4(2p)\f {d(x_0, y_0) }{T}+ {b_4(2p)} \f 1 {\sqrt T} +{b_3(2)} \f 1{\sqrt{t} } +A\right).
\end{aligned}
\end{equation}
for all unit tangent vectors  $v_1, v_2$. Also  the following holds for $0<t\le T$,
$$\begin{aligned}
&  \left|\int_0^t   \f 2 {t-r}\int_0^{(t-r)/2} \<d\{\tilde x_s\}, \tilde W_{s}^{(2)}(v_1,v_2)\>dr\right|_{L^p(\Omega)}\\
 & \le c(p, n)\left( b_4(2p){d(x_0, y_0) }+[b_4(2p)+b_3 (2)] {\sqrt T}+AT\right).
 \end{aligned}$$
Furthermore, the following stochastic processes,
$$ \exp\left[  \int_0^t \Phi^{h}(\tilde{x}_s) \,ds\right]\cdot \f 2 t\int_0^{t/2} \<d\{\tilde x_s\},  \tilde W_{s}^{(2)}(v_1,v_2)\>$$
$$\exp \left[  \int_0^t \Phi^{h}(\tilde{x}_s) \,ds \right]
\int_0^t  
 \f 2 {t-r}\int_0^{(t-r)/2} \<d\{\tilde x_s\}, \tilde W_{s}^{(2)}(v_1,v_2)\>dr$$
are $L^p$ bounded on $[\f T 2, T]$ and converge in $L^1$ as $t\uparrow T$.

\end{lemma}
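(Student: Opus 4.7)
\medskip

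\textbf{Plan.} The integral in question is a $\mathbb{P}$-local martingale (where $\mathbb{P}$ is the bridge law, so that $B$ is a $\mathbb{P}$-Brownian motion), but a direct $\mathbb{P}$-BDG estimate only produces the coarser bound $c(p)\,b_4(2p)/\sqrt t$; to obtain the sharper claim---with the small moment $b_3(2)$ attached only to the $1/\sqrt t$ singularity---I would Girsanov-decompose using Lemma~\ref{lemma-Girsanov}. From $\tilde u_s\,dB_s=\tilde u_s\,d\tilde B_s-\nabla\log(k_{T-s}e^{-h})(\tilde x_s)\,ds$ together with $\nabla\log k_{T-s}(\cdot,y_0)=-\,d\nabla d/(T-s)+\nabla\log J^{-1/2}$, this splits the integral into a $\mathbb{Q}$-martingale piece (with $d\mathbb{Q}=M_t\,d\mathbb{P}$, under which $\tilde x$ becomes an $h$-Brownian motion) plus three drift integrals:
\begin{equation*}
\int_0^{t/2}\langle d\{\tilde x_s\},\tilde W_s^{(2)}\rangle
=\int_0^{t/2}\langle\tilde u_s\,d\tilde B_s,\tilde W_s^{(2)}\rangle
+\int_0^{t/2}\Big\langle\f{d\nabla d}{T-s}-\nabla\log J^{-1/2}+\nabla h,\tilde W_s^{(2)}\Big\rangle ds.
\end{equation*}

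\medskip

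\textbf{Estimating the four pieces.} For the $\mathbb{Q}$-martingale, whose pathwise quadratic variation $\int_0^{t/2}|\tilde W_s^{(2)}|^2\,ds$ is measure-independent, BDG under $\mathbb{Q}$ combined with a H\"older transfer to $\mathbb{P}$ via the density $M_t^{-1}$ produces the $b_3(2)/\sqrt t$ contribution; at $p=2$ this reduces to the direct It\^o isometry under $\mathbb{P}$. The radial drift is treated by pathwise Cauchy--Schwarz in $s$,
\begin{equation*}
\Big|\f 2 t\int_0^{t/2}\Big\langle\f{d\nabla d}{T-s},\tilde W_s^{(2)}\Big\rangle ds\Big|
\leq\Big(\f 2 t\int_0^{t/2}\f{d^2(\tilde x_s,y_0)}{(T-s)^2}ds\Big)^{1/2}\Big(\f 2 t\int_0^{t/2}|\tilde W_s^{(2)}|^2ds\Big)^{1/2};
\end{equation*}
taking $L^p(\mathbb{P})$-norms, applying Cauchy--Schwarz in $\omega$, and inserting the explicit Bessel-bridge moment $\E d^2(\tilde x_s,y_0)=(T-s)^2d^2(x_0,y_0)/T^2+ns(T-s)/T$ together with Lemma~\ref{elementary-lemma} yields the $b_4(2p)\,d(x_0,y_0)/T$ and $b_4(2p)/\sqrt T$ terms. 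The remaining drifts from $\nabla\log J^{-1/2}$ and $\nabla h$ are bounded by H\"older in $L^{2p}\times L^{2p}$ against the suprema $b_1(2p),\,b_2(2p)$ and $b_4(2p)$, producing the composite constant $A$.

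\medskip

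\textbf{Integrated version, weighted processes, and main obstacle.} The second displayed bound follows from the first by Minkowski's integral inequality: apply the pointwise estimate with $t$ replaced by $t-r$ and integrate over $r\in[0,t]$; the singular factor $(t-r)^{-1/2}$ integrates to $2\sqrt t\leq 2\sqrt T$, while the $O(1)$ drift terms produce a factor $T$. For the exponentially weighted processes, condition \underline{\bf C4(b)} gives $\Phi^h\leq C+\delta_2 r^2$ with $\delta_2$ small, which combined with the Gaussian exponential integrability of $d^2(\tilde x_s,y_0)$ from Lemma~\ref{basic-lemma}(1) yields $\beta_T\in L^q(\mathbb{P})$ for every $q\geq 1$; H\"older then transfers the $L^p$ bounds just proved to uniform $L^p$ bounds on $[T/2,T]$ (note that $1/\sqrt t\leq\sqrt{2/T}$ there). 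Convergence in $L^1$ as $t\uparrow T$ follows from pathwise continuity of $t\mapsto\int_0^{t/2}\langle d\{\tilde x_s\},\tilde W_s^{(2)}\rangle$ on $[T/2,T]$ (whose integration endpoint $t/2$ never reaches $T$) together with uniform integrability supplied by the uniform $L^q$ bound. The principal obstacle is the martingale piece: to keep the $L^2$-moment $b_3(2)$ (rather than the larger $b_4(2p)$) attached to the $1/\sqrt t$ singularity one must go through the Girsanov decomposition above, and this in turn requires sufficient $L^q$-integrability of the density $M_t^{-1}$ uniformly in $t\in[0,T/2]$---a delicate point that relies crucially on the small-$\delta$ hypotheses of \underline{\bf C4} interacting compatibly with the Fernique-type tails of the Bessel bridge.
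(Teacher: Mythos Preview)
Your overall plan is right and largely matches the paper: split $\langle d\{\tilde x_s\},\tilde W_s^{(2)}\rangle$ into a stochastic-integral piece and drift integrals coming from $\nabla\log k_{T-s}$ and $\nabla h$; control the radial drift through the Bessel-bridge moments of $d(\tilde x_s,y_0)$ and Lemma~\ref{elementary-lemma}; bound the $\nabla\log J^{-1/2}$ and $\nabla h$ drifts by H\"older against $b_1(2p),b_2(2p),b_4(2p)$; obtain the integrated inequality by Minkowski in $r$; and deduce the weighted $L^p$-boundedness and $L^1$-convergence from the $L^q$-boundedness of $\exp\int_0^t\Phi^h(\tilde x_s)\,ds$ (Lemma~\ref{basic-lemma}(1)) together with H\"older.

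The one genuine misstep is the \emph{direction} of your Girsanov decomposition, and it is precisely what manufactures the ``principal obstacle'' you describe. In the paper's convention (stated at the opening of the proof), $d\{\tilde x_s\}$ means $\tilde u_s\,d\tilde B_s$, where $\tilde B$ is the shifted process of Definition~\ref{constants-1}; this is what the Girsanov transport in Theorem~\ref{2nd-order-kernel} produces from $u_s\,dB_s$, and it is \emph{not} a $\mathbb P$-martingale increment. The correct decomposition therefore runs the other way,
\[
\int_0^{t/2}\bigl\langle \tilde u_s\,d\tilde B_s,\ \tilde W_s^{(2)}\bigr\rangle
=\int_0^{t/2}\bigl\langle \tilde u_s\,dB_s,\ \tilde W_s^{(2)}\bigr\rangle
+\int_0^{t/2}\bigl\langle \nabla\log\!\bigl(k_{T-s}e^{-h}\bigr)(\tilde x_s),\ \tilde W_s^{(2)}\bigr\rangle\,ds,
\]
so that the stochastic piece on the right is a genuine $\mathbb P$-local martingale ($B$ is the $\mathbb P$-Brownian motion driving the bridge SDE) and is estimated directly by It\^o's isometry/BDG under $\mathbb P$: no measure transfer, no control of $M_t^{-1}$, no Fernique subtlety. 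Your drift estimates then carry over unchanged (up to a harmless sign). In short, swap the roles of $B$ and $\tilde B$ in your decomposition and the obstacle disappears entirely.
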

\begin{proof}
Let us take the canonical representation of the process  $\tilde x_t$ so integration w.r.t. 
the martingale part of $\tilde x_s$  is the same as w.r.t. $\tilde u_s d\tilde B_s$.
 Since $\|\Theta^h\|+\|\Rc\| \le c e^{\delta_1 d(\cdot, y_0)}$, we apply Lemma \ref{moments-10} to conclude that
$\sup_{s\le t} \E\left( \left |  \tilde W_{s}^{(2)}(v_1, v_2)\right|^q\right)$ is finite. 

Let us  denote $ \tilde W_{s}^{(2)}(v_1, v_2)$ by $\tilde W_s^{(2)}$ to ease the notation.
By It\^o's isometry,  for any $0\le r<t\le T$,
$$\left |\f 1{t-r}\int_r^t \<\tilde u_s dB_s,  \tilde W_{s}^{(2)}\>\right|_{L^2(\Omega)}
\le \f 1{t-r} \sqrt{\int_r^t \E|\tilde W_s^{(2)}| ^2ds} \le  \f 1 {\sqrt{t-r}}\sup_{r\le s \le t}|\tilde W_s^{(2)}|_{L^2(\Omega)}.$$
On the other hand, by the definition of $\tilde B_t$,
$$\begin{aligned}
&\int_r^t\<\tilde u_s d\tilde B_s,  \tilde W_{s}^{(2)}\>
=\int_r^t\<\tilde u_s d B_s,  \tilde W_{s}^{(2)}\> +\int_r^t  d \log( k_{T-s} e^{-h})\left (  \tilde W_{s}^{(2)}\right)ds.
\end{aligned}$$
Below $c(p)$ stands for a constant depending on $p$. By multiple uses of H\"older's inequality and the elementary inequality
$(a+b)^p\le c(p)a^p+c(p)b^p$ we obtain the following for $p\ge 1$,

$$\begin{aligned}&\E\left( \left|\f 2{t-r} \int_0^{\f {t-r} 2} \<\tilde u_s d\tilde B_s,  \tilde W_{s}^{(2)}\>\right|^p\right)
 \le c(p) \E \left(\left|\f 2{t-r} \int_0^{\f {t-r} 2} \<\tilde u_s d B_s,  \tilde W_{s}^{(2)}\>\right|^p\right)\\
 &\qquad \qquad \qquad \qquad +c(p)\E \left( \left|\f 2{t-r} \int_0^{\f {t-r} 2} d \log( k_{T-s} e^{-h})\left (  \tilde W_{s}^{(2)}\right)ds\right|^p\right). \end{aligned}$$
 $$\begin{aligned}&\E\left( \left|\f 2{t-r} \int_0^{\f {t-r} 2} \<\tilde u_s d\tilde B_s,  \tilde W_{s}^{(2)}\>\right|^p\right)
  \le c(p) (t-r)^{-\f p 2}\sup_{0\le s \le T}\left(\E|\tilde W_s^{(2)}|^2\right)^{\f p2}\\
 &\qquad \qquad \qquad +c(p)\f 1{t-r} \int_r^t  \E \left(  \left| \nabla \log( k_{T-s} e^{-h})(\tilde x_s)\right|^p \left |  \tilde W_{s}^{(2)}\right|^p\right) ds.
 \end{aligned}$$
Since   $\sup_{s\le t}\E e^{\delta_2 d(\tilde x_s, y_0)}<\infty$, and since
 $|\nabla h|$ and $|\nabla \log J|$ are bounded by $\delta_2 d(\cdot, y_0)$,  $|\nabla h(\tilde x_s)|$ and  $|\nabla \log J(\tilde x_s)|$ are 
 $L^p$ bounded on $[0,T]$ for any $p\ge 1$.  Since,
$$\left| \nabla \log( k_{T-s} e^{-h})(\tilde x_s)\right|^{2p}
\le c(p)   \left| \nabla \log( k_{T-s} )(\tilde x_s)\right|^{2p}
+c(p) \left| \nabla h(\tilde x_s)\right|^{2p},$$
 we obtain
$$ \begin{aligned}
&\f 1{t-r} \int_r^t  \E \left(  \left| \nabla \log( k_{T-s} e^{-h})(\tilde x_s)\right|^p \left |  \tilde W_{s}^{(2)}\right|^p\right) ds\\
  \le&  \left(\f 2{t-r} \int_0^{\f {t-r} 2}  \E\left( { \left| \nabla \log( k_{T-s} )(\tilde x_s)-\nabla h(\tilde x_s)\right|^{2p}} \right) \right)^{\f 1 2}
\left(\f 2{t-r} \int_0^{\f {t-r} 2}  \E\left( \left |  \tilde W_{s}^{(2)}\right|^{2p}  \right)ds\right)^{\f 1{2}}\\
 \le&(b_4(2p))^p\left(\f 2{t-r} \int_0^{\f {t-r} 2}  \E\left( \left| \nabla \log( k_{T-s} )(\tilde x_s)\right|^{2p}\right)+(b_2(2,p))^p  \right)^{\f 1 2}.
 \end{aligned}$$
On the other hand, by Lemmas \ref{basic-lemma} and \ref{elementary-lemma} we have:
$$\begin{aligned}&
\left(\f 2{t-r} \int_0^{\f {t-r} 2} \E \left|\nabla \log k_{T-s} (\tilde x_s)\right|^{2p}  ds\right)^{\f 1 2}\\
&\le \left( c(p)\sup_{0\le s \le T} \E|\nabla \log J^{-\f 12}(\tilde x_s)|^{2p} +
c(p)\f {d^{2p}(x_0, y_0) }{T^{2p}}+c(p, n) \f 2{t-r}\int_0^{\f {t-r} 2}  \f {s^p }{(T-s)^{p}T^{p}}ds\right)^{\f 12}\\
&\le c(p)(b_1(2p))^p +c(p)
\f {d^{p}(x_0, y_0) }{T^p}+c(p,n)  T^{- p/2}.\end{aligned}
$$
 Finally we obtain
$$\begin{aligned}
&\E\left( \left|\f 2{t-r} \int_0^{\f {t-r} 2} \<\tilde u_s d\tilde B_s,  \tilde W_{s}^{(2)}\>\right|^p\right)\\
 \le & {}  c(p) (t-r)^{-\f p2}(b_3(2))^p
+c(p)(b_4(2p))^2
\left[(b_1(2p))^p  +(b_2(2p))^p
 +\f {d^{p}(x_0, y_0) }{T^p}+c(p,n)  T^{- p/2}\right]\\
 =&{}c(p, n)\left(
  (t-r)^{-\f p2}(b_3(2))^p+A^p+(b_4(2p))^p\f {d^{p}(x_0, y_0) }{T^p}+(b_4(2p))^p  T^{- p/2}\right) \end{aligned}$$
For $r=0$, this is (\ref{pre-estimate}), as required.
For $p\ge 1$  integrating both sides of the above inequality from $0$ to $t$ with respect to $r$ leads to
$$\begin{aligned}
& \E \left|\int_0^t   \f 2 {t-r}\int_0^{(t-r)/2} \<\tilde u_s d\tilde B_s, \tilde W_{s}^{(2)}(v_1,v_2)\>dr\right|^p\\
 &\le c(p,n) t^{\f p2}(b_3(2))^p +t^pc(p, n)\left(
 A^p+(b_4(2p))^p\f {d^{p}(x_0, y_0) }{T^p}+(b_4(2p))^p  T^{- p/2}\right),
 \end{aligned}$$
giving the second required estimate.

From Lemma \ref{basic-lemma},
 $ e^{ \int_0^t \Phi^{h}(\tilde{x}_s) \,ds}$ is also $L^p$ bounded for any $p\ge 1$, from which we see that
  $ \exp\left[  \int_0^t \Phi^{h}(\tilde{x}_s) \,ds\right]\cdot \f 2 t\int_0^{t/2} \<\tilde u_s d\tilde B_s,  \tilde W_{s}^{(2)}(v_1,v_2)\>$ is $L^p$ bounded.
 The $L^p$ boundedness of the second stochastic process follows by the same argument,  completing the proof of the lemma.
 \end{proof}

\begin{definition}\label{constants-2}
For $0< t \le T$ we set
\begin{equation}\label{Nt} \tilde N_{t}=\f {4}{t^2} \int_{\f t 2}^{t} \<d\{\tilde x_s\}, \tilde W_s(v_1)\>\int_0^{\f t2} \<d\{\tilde x_s\},  \tilde W_s(v_2)\>,
\end{equation}
and
$C_1(K,T)= \max_{ 0\le r \le t \le T}\f{e^{Kt}-e^{Kr}}{K(t-r)}$. And for $\alpha, p$ positive we define
\begin{equation*}
\begin{array}{lll}
&b_1(\alpha p)=\sup_{0\le s \le T}\left|\nabla \log J^{-\f 12}(\tilde x_s)\right|_{L^{p\alpha}(\Omega)}, \quad
b_2(\alpha p)=\sup_{0\le s \le T} \left|\nabla h(\tilde x_s)\right|_{L^{p\alpha}(\Omega)},\\
&a_1(\alpha, p,T)=e^{ |K| T}\left((b_1(\alpha' p))^2+ b_2(\alpha'  p))^2+(b_1(\alpha p))^2+ (b_2(\alpha  p))^2\right), \\
&a_2(K,T)={C_1(K,T)} +e^{ |K| T}.
\end{array}
\end{equation*}
\end{definition}

\begin{lemma}\label{lemma8.5}
Assume \underline{\bf C4a, C4b}.
 Let $\alpha, p\ge 1$ be real numbers such that $\alpha p<2$.
For a constant $c(p)$ depending only on $p$ and $n$, the following holds for $t\in (0,T]$,
\begin{equation*}
\left|\tilde N_{t}\right |_{L^p(\Omega)}
\le c(p,n)\left(a_1(\alpha, p,T)+e^{ |K| T}\f {d^{2}(x_0, y_0)}{T^{ 2}} +a_2(K,T)  \f 1 T\right).
\end{equation*}
Furthermore,
$$\sup_{\f T 2 \le t \le T}\left(\E\left|\int_0^t \tilde N_{t-r}dr\right |^p\right)^{\f 1 p}
\le  c(p, n)  \left(  a_1(\alpha, p,T)T+   e^{ |K| T}\f {d^{2}(x_0, y_0)}{T} +a_2(K,T)\right).$$
As a consequence,  for any number $p\in [1,2)$, $(\tilde N_{t})$ is $L^p$ bounded on  $ [\f T 2, T] $; and so are
the  stochastic processes:
$$e^{  \int_0^t \Phi^{h}(\tilde{x}_s)ds}  \tilde N_{t}; \quad \hbox{ and } \quad 
  e^{  \int_0^t \Phi^{h}(\tilde{x}_s)ds} \int_0^t |  \tilde N_{ t-r} |dr,
$$
both converge in $L^1$ as $t\uparrow T$.
\end{lemma}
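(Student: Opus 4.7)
The strategy follows the template of Lemma~\ref{lemma8.4}, applied now to a product of two stochastic integrals over the disjoint intervals $[0,t/2]$ and $[t/2,t]$. Write $\tilde N_t = \f{4}{t^2}\, I_1(t)\, I_2(t)$ where $I_1(t)=\int_0^{t/2}\<d\{\tilde x_s\},\tilde W_s(v_2)\>$ and $I_2(t)=\int_{t/2}^{t}\<d\{\tilde x_s\},\tilde W_s(v_1)\>$, and apply the Cauchy-Schwartz inequality:
\begin{equation*}
|\tilde N_t|_{L^p(\Omega)}\;\le\; \f{4}{t^2}\,|I_1(t)|_{L^{2p}(\Omega)}\,|I_2(t)|_{L^{2p}(\Omega)}.
\end{equation*}

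Estimate each $|I_j(t)|_{L^{2p}}$ exactly as in the proof of Lemma~\ref{lemma8.4}: using the canonical representation $d\{\tilde x_s\}=\tilde u_s\,dB_s$ and the Girsanov identity $\tilde u_s\,dB_s=\tilde u_s\,d\tilde B_s-\nabla\log(k_{T-s}e^{-h})(\tilde x_s)\,ds$ (cf.\ Lemma~\ref{lemma-Girsanov} and Definition~\ref{constants-1}), split each $I_j$ into a stochastic-integral piece and a drift piece. The stochastic-integral piece is a $\PP$-continuous martingale whose quadratic variation $\int|\tilde W_s(v_k)|^2\,ds$ is controlled pointwise by $|\tilde W_s(v_k)|^2\le e^{Ks}$, the latter inherited from $\rho^h\ge -K$; the Burkholder-Davis-Gundy inequality then yields a contribution to $|I_j|_{L^{2p}}$ of order $\sqrt{t/2}\cdot e^{|K|T/2}$, which after pairing produces the $a_2(K,T)/T$ piece in the range relevant for the final statement. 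The drift piece is bounded by Cauchy-Schwartz together with the Bessel-bridge estimates of Lemma~\ref{basic-lemma}(3) on $\int|\nabla\log(k_{T-s}e^{-h})(\tilde x_s)|^{2p}\,ds$, producing contributions proportional to $b_1(\alpha p)$, $b_2(\alpha p)$, $d(x_0,y_0)/T$, and $T^{-1/2}$. Multiplying the two $L^{2p}$ bounds and pairing with the factor $4/t^2$ produces, via cross-terms, the $a_1(\alpha,p,T)$ piece, the $d^2(x_0,y_0)/T^2$ piece, and the $a_2(K,T)/T$ piece of the stated bound.

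The second (time-integrated) inequality follows by Minkowski's integral inequality applied to the above pointwise bound on $|\tilde N_{t-r}|_{L^p}$ and explicit integration over $r\in[0,t]$. The $L^p$-boundedness of $e^{\int_0^t\Phi^h(\tilde x_s)\,ds}\,\tilde N_t$ and of its integrated version then follow from H\"older's inequality using the $L^{\alpha'}$-bound on the exponential weight provided by Lemma~\ref{basic-lemma}(1) under \underline{\bf C4(b)} with $\delta$ small enough; the constraint $\alpha p<2$ is precisely what aligns the H\"older exponents so that both factors lie in compatible Lebesgue spaces. Finally, $L^1$-convergence as $t\uparrow T$ follows from uniform integrability (supplied by the $L^p$-domination for some $p>1$) together with almost-sure convergence of $\tilde N_t$ and of the exponential weight; the integrals defining $\tilde N_t$ extend continuously across $t=T$ because their integrands remain in $L^2_{\loc}([0,T]\times\Omega)$.

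The main obstacle is managing the terminal singularity $\nabla\log k_{T-s}\sim d(\tilde x_s,y_0)/(T-s)$ in the drift contribution: its naive $L^{2p}$-norm diverges at $s=T$, and what rescues the argument is the Bessel-bridge identity $\E\,d^2(\tilde x_s,y_0)=\f{(T-s)^2}{T^2}d^2(x_0,y_0)+\f{ns(T-s)}{T}$, whose leading term decays at exactly the rate $(T-s)^2/T^2$. This contraction is what produces the $d^2(x_0,y_0)/T^2$ coefficient in the final bound rather than an unbounded contribution as $t\uparrow T$, and what allows the full argument to be pushed to the terminal time.
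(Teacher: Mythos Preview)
Your splitting of $\tilde N_t$ by Cauchy--Schwarz into two $L^{2p}$ factors is the point where the argument breaks. The two halves of $\tilde N_t$ are not symmetric with respect to the terminal singularity: $I_1(t)$ integrates over $[0,t/2]$, which stays away from $s=T$, while $I_2(t)$ integrates over $[t/2,t]$, which approaches $s=T$. For the drift piece of $I_2$ you need to control
\[
\frac{2}{t}\int_{t/2}^{t}\bigl|\nabla\log k_{T-s}(\tilde x_s)\bigr|^{q}\,ds
\]
with $q=2p$. But Lemma~\ref{basic-lemma}(3) together with Lemma~\ref{elementary-lemma} only bounds this uniformly on $(0,T]$ when $q<2$; for $q\ge 2$ the integral $\frac{2}{t}\int_{t/2}^{t}\frac{s^{q/2}}{(T-s)^{q/2}T^{q/2}}\,ds$ diverges as $t\uparrow T$. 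Since the lemma requires $p\ge 1$, your choice $q=2p\ge 2$ is exactly the forbidden range, and the bound on $|I_2(t)|_{L^{2p}}$ blows up at the terminal time. The paper avoids this by applying H\"older with \emph{asymmetric} conjugate exponents $\alpha,\alpha'$: the small exponent $\alpha p<2$ is placed on the dangerous upper-half integral $I_2$, while the large exponent $\alpha' p$ lands on the harmless lower-half integral $I_1$ (for which Lemma~\ref{elementary-lemma} gives a uniform bound for all exponents). This is precisely why the constants $a_1(\alpha,p,T)$ in Definition~\ref{constants-2} involve both $b_i(\alpha p)$ and $b_i(\alpha' p)$. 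You have misplaced the role of the hypothesis $\alpha p<2$: it is not used to pair $\tilde N_t$ with the exponential weight, but to split $\tilde N_t$ itself.

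A secondary issue concerns the time-integrated estimate. Minkowski's integral inequality gives
\(
\bigl|\int_0^t \tilde N_{t-r}\,dr\bigr|_{L^p}\le \int_0^t |\tilde N_{t-r}|_{L^p}\,dr,
\)
but even the correct pointwise bound on $|\tilde N_{t-r}|_{L^p}$ carries a term of order $(t-r)^{-1}$ coming from the Burkholder--Davis--Gundy contribution, and $\int_0^t (t-r)^{-1}\,dr$ diverges. The paper instead applies H\"older in $r$ to pass to $t^{p-1}\int_0^t \E|\tilde N_{t-r}|^p\,dr$; you should at minimum recognise that the $r\to t$ endpoint is non-trivial and requires more than term-by-term integration of the first estimate.
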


\begin{proof}
Let us take $v_1, v_2$ to be unit vectors and take the canonical representation of $\tilde x_t$ so $d\{\tilde x_s\}=\tilde u_s d\tilde B_s$.   We first compute $\E |\tilde N_{t-r}|^p$  for which it is sufficient to split the products. For
$\alpha, \alpha'>1$ conjugate, 
$\f 1 \alpha +\f 1 {\alpha'}=1$,  we have
\small
$$
\E|\tilde N_{t-r}|^p
\le \left(  \E\left( \left| \f {2}{t-r} \int_{\f {t-r} 2}^{t-r} \<\tilde u_s d\tilde B_s, \tilde W_s(v_1)\>\right|^{p\alpha}\right)\right)^{\f 1 \alpha}
\left( \E\left( \left| \f 2 {t-r} \int_0^{\f {t-r}2 }\<\tilde u_s d\tilde B_s,  \tilde W_s(v_2)\>\right|^{p\alpha'}\right)\right)^{\f 1 {\alpha'}}.$$
\normalsize
Below $c$ denotes a constant whose value may change from line to line.
Using the lower bound $\rho^h\ge -K$ and Burkholder-Davis-Gundy inequality we see that, for any $0\le r<t\le T$,
$$\E \left |\f 1{t-r}\int_r^t \<\tilde u_s dB_s,  \tilde W_{s}(v_i)\>\right|^p
\le \f {c(p)}{(t-r)^p}\E \left(\int_r^t |\tilde W_s(v_i)| ^{2}ds\right)^{\f p2} \le  \f {c(p)C_1(K,T)^{\f p2}} {(t-r)^{\f p 2}},$$
where $C_1(K,T)= \max_{ 0\le r \le t \le T}\f{-e^{Kr}+e^{Kt}}{K(t-r)}$ if $K\not =0$, and $C_1(K,T)=1$ for $K=0$. 
By the definition of $\tilde B$, the estimate above, and
repeated use of the elementary inequality $(a+b)^p\le c(p)a^p+c(p)b^p$ we obtain the following estimates:

\begin{equation*}
\begin{aligned}
& \E\left( \left| \f {1}{t-r} \int_r^{t} \<\tilde u_s d\tilde B_s, \tilde W_s(v_1)\>\right|^{\alpha p}\right)\\
 \le &c(p, \alpha)\E\left( \left| \f {1}{t-r} \int_r^{t} \<\tilde u_s d B_s, \tilde W_s(v_1)\>\right|^{\alpha p}\right)
 \\&+c(p, \alpha )\E\left( \left| \f {1}{t-r} \int_r^{t} \<\nabla \log (k_{T-s}e^{-h}, \tilde W_s(v_1)\>\right|^{\alpha p}\right)\\
\le &c(p,\alpha)\f {(C_1(K,T))^{\f {\alpha p} 2} }  {(t-r)^{\f {\alpha p} 2} }
+c(p,\alpha)\E\left( \f {1}{t-r} \int_r^{t}  \left|\nabla \log (k_{T-s}e^{-h})\right|^{\alpha p} \left| \tilde W_s(v_1)\right|^{\alpha p}ds  \right)\\
\le &c(p, \alpha)\f {(C_1(K,T))^{\f {\alpha p} 2} }  {(t-r)^{\f {\alpha p} 2} }\\
&+c(p, \alpha)e^{\f 12|K|p\alpha T}\E\left( \f {1}{t-r} \int_r^{t} 
 \left|\nabla \log k_{T-s}(\tilde x_s)\right|^{\alpha p} ds+\f {1}{t-r} \int_r^{t}  \left|\nabla h(\tilde x_s)\right|^{\alpha p} ds   \right).
 \end{aligned}
\end{equation*}
We apply Lemma \ref{basic-lemma} to obtain that, \begin{equation}\label{estimate-6.8}
\begin{aligned}
& \E\left( \left|  \f {2}{t-r} \int_{\f {t-r} 2}^{t-r} \<\tilde u_s d\tilde B_s, \tilde W_s(v_1)\>\right|^{\alpha p}\right)
\le c(\alpha, p)\f {(C_1(K,T))^{\f {\alpha p} 2} }  {(t-r)^{\f {\alpha p} 2} }\\
&+c(\alpha, p)e^{\f 12 |K| \alpha pT}\left(  (b_1(\alpha,p))^{ \alpha p}+ (b_2(\alpha,p))^{ \alpha p}+\f {d^{\alpha p}(x_0, y_0)}{T^{\alpha  p}} +F(\alpha p,(t-r)/2, {t-r})\right),
 \end{aligned}
\end{equation}
where for $F(\alpha p, r,t)=\f 1{t-r}  \int_r^t \left( \f  {s}{(T-s)T}\right) ^{\f 12 \alpha p}ds$.
Consequently,  the $p^{th}$ moment of $|\tilde N_{t-r}|^p$ has an upper bound of the same form:
\small
\begin{equation*}
\begin{aligned} 
& \E|\tilde N_{t-r}|^p\\
 \le& \left| \f {2}{t-r} \int_{\f {t-r} 2}^{t-r} \<\tilde u_s d\tilde B_s, \tilde W_s(v_1)\>\right|^p_{L^{p\alpha}(\Omega)}
\left| \f 2 {t-r} \int_0^{\f {t-r}2 }\<\tilde u_s d\tilde B_s,  \tilde W_s(v_2)\>\right|^p_{L^{p\alpha'}(\Omega)}\\
\le &c(\alpha, p)\left( \f {(C_1(K,T))^{\f { p} 2} }  {(t-r)^{\f p 2} }
+e^{\f 12 |K| pT}\left( (b_1(\alpha p))^{p}+ (b_2(\alpha p))^{  p}+\f {d^{p}(x_0, y_0)}{T^{ p}} +\left( F(\alpha p, (t-r)/2, t-r)\right)^{\f 1 \alpha}\right)\right)
\\&\times \left( \f {(C_1(K,T))^{\f { p} 2} }  {(t-r)^{\f p 2} }
+e^{\f 12 |K| pT}\left( (b_1(\alpha' p))^{p}+ (b_2(\alpha' p))^{  p}+\f {d^{p}(x_0, y_0)}{T^{ p}} +\left( F(\alpha' p, 0, t-r)\right)^{\f 1 \alpha'}\right)\right).
\end{aligned}
\end{equation*}
\normalsize
If $\alpha p<2$ we apply Lemma \ref{elementary-lemma},
$$\sup_{0\le t \le T} \left( F(\alpha p, (t-r)/2, t-r))\right)^{\f 1 \alpha} \le c(p, \alpha) T^{-p /2}, \; \; 
\sup_{0\le t \le T} \left( F(\alpha' p, 0,t-r)\right)^{\f 1 \alpha'} \le c(p, \alpha) T^{-p /2}.$$
Thus, for $b_3:=(b_1(\alpha' p))^2+ b_2(\alpha' p))^2+(b_1(\alpha p))^2+ (b_2(\alpha p))^2$,
\begin{equation*}
\left( \E|\tilde N_{t-r}|^p\right)^{\f 1 p}
\le c(\alpha, p)\left( {C_1(K,T)} \f 1 {t-r}
+e^{ |K| T}\left( b_3+\f {d^{2}(x_0, y_0)}{T^{ 2}} +\f 1 T\right)\right).
\end{equation*}

 Set
$b_4:=(b_1(\alpha' p))^{2p}+ b_2(\alpha' p))^{2p}+(b_1(\alpha p))^{2p}+ (b_2(\alpha p))^{2p}$.
Since $L^p$ norms in a probability space increases with $p$ we may assume that $p>1$
and apply H\"older's inequality to obtain:
\begin{equation*}
\begin{aligned}
&\E\left|\int_0^t \tilde N_{t-r}dr\right |^p\\
&\le t^{p-1} \E \left(   \int_0^t \left| \f {1}{t-r} \int_r^{t} \<\tilde u_s d\tilde B_s, \tilde W_s(v_1)\> 
\cdot \f 1r \int_0^{r} \<\tilde u_s d\tilde B_s,  \tilde W_s(v_2)\>\right|^p dr \right)\\
&\le t^{p-1}   \int_0^t  \left| \f {2}{t-r} \int_{\f {t-r} 2}^{t-r} \<\tilde u_s d\tilde B_s, \tilde W_s(v_1)\>\right|^p_{L^{p\alpha}(\Omega)}
\left| \f 2 {t-r} \int_0^{\f {t-r}2 }\<\tilde u_s d\tilde B_s,  \tilde W_s(v_2)\>\right|^p_{L^{p\alpha'}(\Omega)}
 dr\\
&\le  c(\alpha, p) t^{p-1} \int_0^t \left( \f {(C_1(K,T))^p } {(t-r)^p }
+e^{ |K| pT}\left( b_4+\f {d^{2p}(x_0, y_0)}{T^{ 2p}} +T^{-p}\right)\right)dr\\
&\le  c(\alpha, p)  \left(  {(C_1(K,T))^p } 
+T^p e^{ |K| pT}\left( b_4+\f {d^{2p}(x_0, y_0)}{T^{ 2p}} +T^{-p}\right)\right).
\end{aligned}
\end{equation*}
This completes the proof.
\end{proof}
\subsection{Hessian formula in terms of the semi-classical bridge}

We are ready to prove a formula for $\nabla d p_T^{h,V}$ in terms of the semi-classical bridge.
 The idea is to use Grisanov transform to transform our Hessian formula
in terms of the $h$-Brownian motion to the semi-classical bridge with terminal time $T$, which is valid on any interval $[0,t]$ where $t<T$.
The previous estimates allow us to take $t\to T$ in the formula.
Set  $$\tilde \V_{a,t}=(V(\tilde x_a)-V(x_0)) e^{-\int_{a}^t (V(\tilde x_s) -V(x_0))ds}.$$

\begin{theorem}\label{2nd-order-kernel}
Assume \underline{\bf C4}. Let $V$ be a bounded H\"{o}lder continuous function.  
 Then the following formula holds where $T>0$:
\begin{equs}
&\f{\nabla d p_T^{h,V}(v_1, v_2)}{k_T(x_0, y_0)}e^{h(x_0)-h(y_0)+V(x_0)T} \\
=& \E\left[ e^{\int_0^T \Phi^h(\tilde{x}_s)ds}\tilde N_{T}\right]
+\f 2 T 
\E \left[e^{\int_0^T \Phi^h(\tilde{x}_s)ds} \int_0^{T/2} \<d\{\tilde x_s\}, \tilde W_s^{(2)}(v_1,v_2)\> \right]\\
&+\int_0^T  
\E \left[ \tilde \V_{T-r, T} \; e^{\int_0^T \Phi^h(\tilde{x}_s)ds}\f 2 {T-r}\int_0^{(T-r)/2} \<d\{\tilde x_s\},  
 \tilde W_{s}^{(2)}(v_1,v_2)\> \right] dr\\
&+\int_0^T  \E\left[  \tilde \V_{T-r, T} \;e^{\int_0^T \Phi^h(\tilde{x}_s)ds}\tilde N_{T-r}\right] 
dr.
\end{equs}
\end{theorem}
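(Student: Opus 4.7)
The strategy is to apply Theorem~\ref{second-order} at time $t<T$ with initial datum $f=p_{T-t}^{h,V}(\cdot,y_0)$, convert each resulting $h$-Brownian expectation into one for the semi-classical bridge via Lemma~\ref{lemma-Girsanov}, and then send $t\uparrow T$, using Lemmas~\ref{lemma8.4} and~\ref{lemma8.5} for the limit passage. The semigroup identity $P_t^{h,V}(p_{T-t}^{h,V}(\cdot,y_0))(x_0)=p_T^{h,V}(x_0,y_0)$, together with~\underline{\bf C4(c)}, gives for every $t\in(0,T)$ a prelimit form of the theorem: the left-hand side is $\nabla d\,p_T^{h,V}(x_0,y_0)(v_1,v_2)$ and the right-hand side is $e^{-V(x_0)t}$ times four $h$-Brownian expectations $\E[p_{T-t}^{h,V}(x_t,y_0)\,\Xi_t]$, with $\Xi_t$ successively equal to $N_t$, to $\tfrac{2}{t}\int_0^{t/2}\<d\{x_s\},W_s^{(2)}\>$, and to the two $\V_{t-r,t}$-weighted $dr$-integrals written out in Theorem~\ref{second-order}.

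Next, I apply Lemma~\ref{lemma-Girsanov}. Because $W_s$ and $W_s^{(2)}$ are defined path-wise by the covariant ODEs~\eqref{system-equs}, the change of measure replaces $(x_\cdot,W_\cdot,W_\cdot^{(2)},N_t)$ by $(\tilde x_\cdot,\tilde W_\cdot,\tilde W_\cdot^{(2)},\tilde N_t)$ and inserts the density
$$M_t=e^{h(\tilde x_t)-h(x_0)}\,\frac{k_T(x_0,y_0)}{k_{T-t}(\tilde x_t,y_0)}\,\exp\!\left[\int_0^t\Phi^h(\tilde x_s)\,ds\right].$$
I pair the divergent factor $k_{T-t}(\tilde x_t,y_0)^{-1}$ inside $M_t$ with $p_{T-t}^{h,V}(\tilde x_t,y_0)$ \emph{before} taking any limit, using the $h$-weighted analogue of~\eqref{elaform} (the same Girsanov computation used to prove~\eqref{elaform}, applied to the shortened bridge from $\tilde x_t$ to $y_0$ of duration $T-t$):
$$\frac{e^{h(\tilde x_t)}\,p_{T-t}^{h,V}(\tilde x_t,y_0)}{k_{T-t}(\tilde x_t,y_0)}=e^{h(y_0)}\,\E\!\left[\exp\!\int_0^{T-t}(\Phi^h-V)(\hat x_s)\,ds\right],$$
where $\hat x$ is the semi-classical bridge from $\tilde x_t$ to $y_0$ of duration $T-t$. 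After collecting constants the identity takes the form claimed in the theorem, but with the time integrals restricted to $[0,t]$.

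I then send $t\uparrow T$. The bridge terminates a.s.\ at $y_0$, so the displayed ratio converges a.s.\ to $e^{h(y_0)}$, the deterministic prefactor to $e^{h(y_0)-h(x_0)-V(x_0)T}k_T(x_0,y_0)$, and $\exp\!\int_0^t\Phi^h(\tilde x_s)\,ds$ to $\exp\!\int_0^T\Phi^h(\tilde x_s)\,ds$, the latter being $L^p$-bounded under~\underline{\bf C4(b)} by Lemma~\ref{basic-lemma}(1). Lemma~\ref{lemma8.5} supplies the uniform $L^p$ ($p\in[1,2)$) bound on $\tilde N_{t-r}$ required to pass to the limit in the $\tilde N_t$-term and in the $\int_0^t\tilde\V_{t-r,t}\tilde N_{t-r}\,dr$ integral; Lemma~\ref{lemma8.4} supplies the matching bound for the two $\tilde W^{(2)}$-terms. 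Dominated convergence, using boundedness of $V$ to control $\tilde\V_{t-r,t}$, identifies each of the four $t\uparrow T$ limits with the four expectations on the right-hand side of the statement.

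The main obstacle is precisely this final passage. The density $M_t$ itself blows up, since $k_{T-t}(\tilde x_t,y_0)\to\infty$ as the bridge approaches $y_0$, so no $L^p$ bound on $M_t$ alone can succeed. The argument works only because $M_t$ is paired with the compensating $p_{T-t}^{h,V}(\tilde x_t,y_0)$ coming from $f$, the cancellation is extracted explicitly via the elementary formula, and the residual bounds on the integrands rely on the exponential integrability of $\tilde W^{(2)}$ from Lemma~\ref{exponential-estimates} together with the Bessel-bridge estimates of Lemma~\ref{basic-lemma}; these in turn depend on the quadratic growth conditions on $\|\Theta^h\|$, $|\nabla h|$ and $|\nabla\log J|$ and the boundedness of $\|\Rc\|$ imposed in~\underline{\bf C4(a)}.
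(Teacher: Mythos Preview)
Your overall strategy---apply the second-order formula at a time $t<T$, transfer to the semi-classical bridge by Girsanov, cancel the blow-up of $M_t$ against the test function, then pass $t\uparrow T$ using Lemmas~\ref{lemma8.4} and~\ref{lemma8.5}---is the paper's. The difference is the choice of test function. The paper takes $f=k_{T-\epsilon}(\cdot,y_0)\phi$ with $\phi\in C_K^\infty$, $\phi(y_0)=1$, and runs a double limit $\epsilon\uparrow t$, $t\uparrow T$: on the left, $k_{T-\epsilon}\phi$ acts as an approximate delta so that the limit of $\nabla d\,P_t^{h,V}(k_{T-\epsilon}\phi)$ is $\nabla d\,p_T^{h,V}(\cdot,y_0)$; on the right, after $\epsilon\uparrow t$ and Girsanov, the factor $k_{T-t}(\tilde x_t,y_0)$ coming from $f(\tilde x_t)$ cancels \emph{algebraically} with $k_{T-t}(\tilde x_t,y_0)^{-1}$ inside $M_t$, leaving $A_t\phi(\tilde x_t)$, and $\phi(\tilde x_t)\to 1$ as $t\uparrow T$. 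Your choice $f=p_{T-t}^{h,V}(\cdot,y_0)$ makes the left-hand side immediate via the semigroup identity and eliminates the $\epsilon$-step, but you must then invoke the elementary formula plus the Markov property of the semi-classical bridge to handle the ratio $p_{T-t}^{h,V}(\tilde x_t,y_0)/k_{T-t}(\tilde x_t,y_0)$; after towering, the extra factor $e^{-\int_t^T V(\tilde x_s)\,ds}$ from the inner bridge does combine correctly with $\tilde\V_{t-r,t}$ (change variable $u=t-r$), so the prelimit form matches.

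One genuine technical gap: Theorem~\ref{second-order} is stated for $f\in\B_b$, and under \underline{\bf C4} alone there is no guarantee that $x\mapsto p_{T-t}^{h,V}(x,y_0)$ is bounded. The growth allowed on $|\nabla h|$ and $|\nabla\log J|$ in \underline{\bf C4(a)} permits $J_{y_0}^{-1/2}e^{-h}$ to grow faster than any Gaussian, so the elementary-formula representation of $p_{T-t}^{h,V}$ need not be bounded in $x$. The paper's compactly supported cutoff $\phi$ is exactly what sidesteps this; your route would need either the same device or a separate argument that the Hessian formula extends to this unbounded $f$.
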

\begin{proof}
We may assume $V(x_0)=0$, and work with $V-V(x_0)$ otherwise.
We may also take $x_t=\pi(u_t)$ and  $\tilde x_t=\pi(\tilde u_t)$ so $d\{x_s\}=u_s dB_s$.
In the formula of Theorem \ref{second-order}, we take $f=k_{T-\epsilon} (\cdot, y_0)\phi$ where $\epsilon$ is a positive number smaller than $T$, $\phi$ is a smooth function with compact support such that $\phi(\pole)=1$, thus
\begin{equs}
&\nabla dP_t^{h,V}\left( k_{T-\epsilon} (\cdot, y_0)\phi\right)(v_1, v_2)\\
=& \E\left[k_{T-\epsilon} ( x_t, y_0)\phi (x_t)N_{t}\right]
+\E \left[ k_{T-\epsilon} ( x_t, y_0)\phi (x_t)\f 2 t\int_0^{t/2} \< u_s dB_s,  W_{s}^{(2)}(v_1,v_2)\> \right]\\
&+\int_0^t  
\E \left[V  ( x_{t-r})k_{T-\epsilon} (x_t, y_0)\phi (x_t)e^{-\int_{t-r}^t V( x_s) ds}\f 2 {t-r}\int_0^{(t-r)/2} \<u_s dB_s, W_{s}^{(2)}(v_1,v_2)\>dr \right] \\
&+\int_0^t  \E\left[V  ( x_{t-r})k_{T-\epsilon} (x_t, y_0)\phi (x_t)e^{-\int_{t-r}^t V(x_s) ds}N_{t-r}dr\right].
\end{equs}
Take $\epsilon\uparrow t$ in the above identity, followed by taking $t\uparrow T$. The left hand side is:
\begin{equs}
&\lim_{t\uparrow T}\lim_{\epsilon\uparrow t}\nabla dP_t^{h,V}\left( k_{T-\epsilon} (\cdot, y_0)\phi\right)(v_1, v_2)\\
&=\lim_{t\uparrow T}\lim_{\epsilon\uparrow t}\nabla d \left(\int_M p_t^{h,V}(\cdot, z)  k_{T-\epsilon} (z, y_0)\phi(z) dz  \right)(v_1, v_2)\\
&=\lim_{t\uparrow T}\lim_{\epsilon\uparrow t} \int_M \nabla d p_t^{h,V}(\cdot, z)(v_1, v_2)  k_{T-\epsilon} (z, y_0)\phi(z) dz\\
&=\nabla d p_T^{h,V}(\cdot, y_0)(v_1, v_2).
\end{equs}
In the last step we use the fact that $k_{T-t}$ is an approximation of the delta measure at $y_0$, 
$J(y_0)=1$, and $\phi$ has compact support, so we may exchange the order of taking limits 
and integration.

On the right hand side, we note that $k_{T-\epsilon}(\cdot, y_0)\phi$ is bounded uniformly in $\epsilon$ for $\epsilon$ small, $V$ is bounded. Also, by Lemma \ref{moments-10},
the stochastic integrals on the right hand side are $L^p$ integrable for any $p\ge 1$. We may therefor interchange the order of taking limits and 
taking expectations. The right hand side, after taking $\epsilon\uparrow t$, is:
\begin{equs}
& \E\left[k_{T-t} ( x_t, y_0)\phi (x_t)N_{t}\right]
+\E \left[ k_{T-t} ( x_t, y_0)\phi (x_t)\f 2 t\int_0^{t/2} \< u_s dB_s,  W_{s}^{(2)}(v_1,v_2)\> \right]\\
&+\int_0^t  
\E \left[V  ( x_{t-r})k_{T-t} (x_t, y_0)\phi (x_t)e^{-\int_{t-r}^t V( x_s) ds}\f 2 {t-r}\int_0^{(t-r)/2} \<u_s dB_s, W_{s}^{(2)}(v_1,v_2)\> \right]dr \\
&+\int_0^t  \E\left[V  ( x_{t-r})k_{T-t} (x_t, y_0)\phi (x_t)e^{-\int_{t-r}^t V(x_s) ds}N_{t-r}\right]dr.
\end{equs}
To the above, we make a Girsanov transform and apply Lemma \ref{lemma-Girsanov} to  transfer the $x_t$ process to the $\tilde x_t$ process
and obtain:
\begin{equs}
& \E\left[ M_tk_{T-t} (\tilde  x_t, y_0)\phi (\tilde x_t) \tilde N_{ t}\right]
+\E \left[ M_tk_{T-t} (\tilde  x_t, y_0)\phi (\tilde x_t)\f 2 t\int_0^{t/2} \<\tilde u_s d\tilde B_s,  \tilde W_{s}^{(2)}(v_1,v_2)\> \right]\\
&+\int_0^t  
\E \left[ M_t V  (\tilde x_{t-r})k_{T-t} (\tilde  x_t, y_0)\phi (\tilde x_t) e^{-\int_{t-r}^t V(\tilde x_s) ds}\f 2 {t-r}\int_0^{(t-r)/2} \<\tilde u_s d\tilde B_s, \tilde W_{s}^{(2)}(v_1,v_2)\> \right]dr \\
&+\int_0^t  \E\left[ M_t V  (\tilde x_{t-r})k_{T-t} (\tilde  x_t, y_0)\phi (\tilde x_t)e^{-\int_{t-r}^t V(\tilde x_s) ds}\tilde N_{t-r}\right]dr.
\end{equs}
The $k_{\term-\tm} (\tilde{x}_\tm, y_0)$ term in the above integrals cancels with that in $M_t$, where
$$M_t=e^{h(\tilde x_t)-h(x_0)}\f {k_\term (x_0, y_0) } {k_{\term-\tm} (\tilde{x}_\tm, y_0) }\exp\left[  \int_0^\tm \Phi^{h}(\tilde{x}_\stm) \,d\stm\right].$$ Set
$A_t=e^{h(\tilde x_t)-h(x_0)} {k_\term (x_0, y_0) } \exp\left[  \int_0^\tm \Phi^{h}(\tilde{x}_\stm) \,d\stm\right]$, the right hand side simplifies to
the following expression:
\begin{equs}
& \E\left[ A_t\phi (\tilde x_t) \tilde N_{ t}\right]
+\E \left[ A_t\phi (\tilde x_t)\f 2 t\int_0^{t/2} \<\tilde u_s d\tilde B_s,  \tilde W_{s}^{(2)}(v_1,v_2)\> \right]\\
&+\int_0^t  
\E \left[ A_t V  (\tilde x_{t-r})\phi (\tilde x_t) e^{-\int_{t-r}^t V(\tilde x_s) ds}\f 2 {t-r}\int_0^{(t-r)/2} \<\tilde u_s d\tilde B_s, \tilde W_{s}^{(2)}(v_1,v_2)\>\right]dr \\
&+\int_0^t  \E\left[ A_t V  (\tilde x_{t-r})\phi (\tilde x_t)e^{-\int_{t-r}^t V(\tilde x_s) ds}\tilde N_{t-r}\right]dr.
\end{equs}
We are now free to take the limit $t\uparrow T$. Since $\phi(\tilde x_t)\to 1$ and $\Phi^h$ does not depend on time,  $A_t\phi(\tilde x_t)$ converges almost surely to $$A_T\phi(\tilde x_T)=e^{h(y_0)-h(x_0)} {k_\term (x_0, y_0) } \exp\left[  \int_0^T \Phi^{h}(\tilde{x}_\stm) \,d\stm\right].$$
Since $\phi$ has compact support and $ \int_0^T \Phi^{h}(\tilde{x}_\stm) $ is $L^p$ bounded $L^p$ for all $p$, the above convergence
holds in $L^p$ for any $p$. By Lemma \ref{lemma8.5}, we may exchange the order of taking the limit $t \to T$ and taking expectation the convergence of the earlier long expression to the following :
\begin{equs}
&\nabla d p_T^{h,V}(\cdot, y_0)(v_1, v_2)\\
&= \E\left[ A_T\tilde N_{T}\right]
+\E \left[ A_T\f 2 T\int_0^{T/2} \<\tilde u_s d\tilde B_s,  \tilde W_{s}^{(2)}(v_1,v_2)\> \right]\\
&+\int_0^T 
\E \left[ A_T V  (\tilde x_{T-r}) e^{-\int_{T-r}^T V(\tilde x_s) ds}\f 2 {T-r}\int_0^{(T-r)/2} \<\tilde u_s d\tilde B_s, \tilde W_{s}^{(2)}(v_1,v_2)\> \right] dr\\
&+\int_0^T  \E\left[ A_T V  (\tilde x_{T-r})e^{-\int_{T-r}^T V(\tilde x_s) ds}\tilde N_{T-r}\right]dr.
\end{equs} 
To summarise this is exactly $\nabla d p_T^{h,V}(\cdot, y_0)(v_1, v_2)$ and the proof is complete.
\end{proof}

 \section{Exact Gaussian bounds for weighted Laplacian}
 \label{section-exact-Gaussian}
Let $T>0$ and $x_0\in M$.  Set $\beta_T^h=e^{\int_0^T \Phi^h(\tilde{x}_s)ds} $   and 
 $Z_T=\f {e^{\int_0^T \Phi^h(\tilde{x}_s)\,ds}} {\E \left[ e^{\int_0^T \Phi^h(\tilde{x}_s)\,ds} \right]}$.
 Let $ST_{x_0}M$ denote the unit sphere in the tangent space $T_{x_0}M$.

\begin{corollary}
 Assume \underline{\bf C4}. Let $q\ge 1$. Then there exists a constant $c$, explicitly given in the proof,
 s.t. for  any $v_1, v_2\in ST_{x_0}M$,
 \begin{equs}
& \left|\nabla d p_T^h(v_1, v_2\right|
 \le c\;T^{-\f n 2}e^{-\f {d^2(x_0, y_0)}{2T}}  J_{y_0}^{-\f 12}(x_0) {e^{h(y_0)-h(x_0)} }|\beta_T^h|_{L^{q}(\Omega)}\left( \f {d^2}{T^2} +\f 1 { T}+1\right)
\end{equs}
Also,  the following formula holds:
\begin{equation}
\label{Hessian-kernel-5}
\f{\nabla d p_T^h(v_1, v_2)} {p_T^h(x_0, y_0)}
= \E\left[ Z_T\tilde N_{ T}\right]
+\E \left[Z_T \f 2 T \int_0^{T/2} \<d\{\tilde x_s\}, \tilde W_s^{(2)}(v_1,v_2)\> \right].
\end{equation}
Furthermore, there exists a function $C$  locally bounded in $t$ and explicitly given in the proof,
such that $$\left| \nabla d\log  p_T^h \right|\le C(T, K, q,\delta_1, \delta_2) \; |Z_T|_{L^{q}(\Omega)}\left( \f {d^2}{T^2} +\f 1 { T}+1\right)
+ |Z_T|^2_{L^{2} (\Omega)} \f 1 {T} \left(\f {e^{KT}-1}{KT}\right).$$
\end{corollary}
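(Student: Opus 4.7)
The plan is to specialize Theorem \ref{2nd-order-kernel} to $V\equiv 0$ and then turn the two resulting probabilistic expressions into pointwise estimates by H\"older's inequality combined with Lemmas \ref{lemma8.4} and \ref{lemma8.5}. Concretely, setting $V=0$ and using $\tilde{\V}_{T-r,T}\equiv 0$ collapses the Hessian formula to
\[
\f{\nabla d p_T^{h}(v_1,v_2)}{k_T(x_0,y_0)}\,e^{h(x_0)-h(y_0)}
= \E\!\left[\beta_T^h\,\tilde N_T\right]+\f{2}{T}\,\E\!\left[\beta_T^h\int_0^{T/2}\!\<d\{\tilde x_s\},\tilde W_s^{(2)}(v_1,v_2)\>\right].
\]
Choose conjugate exponents $q,q'$ with $q'$ close to $1$ (so that $q' p<2$ can be arranged for the application of Lemma \ref{lemma8.5}) and apply H\"older to split $\beta_T^h$ off. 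The factor $|\beta_T^h|_{L^q(\Omega)}$ is finite by \underline{\bf C4(b)} and Lemma \ref{basic-lemma}(1); Lemma \ref{lemma8.5} controls $|\tilde N_T|_{L^{q'}}$ by $c(d^2/T^2+1/T)$ (collecting the $a_i$'s into a single constant) and Lemma \ref{lemma8.4} controls the second stochastic integral by $c(d/T+1/\sqrt T+1)$, whose square-summed contribution is dominated by $d^2/T^2+1/T+1$. This proves the first displayed inequality with $c$ the obvious combination of $a_1,a_2,b_3,b_4,A$.

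For the identity (\ref{Hessian-kernel-5}) the plan is to divide both sides of the specialized Theorem \ref{2nd-order-kernel} formula by the Elworthy--Truman elementary formula
\[
p_T^h(x_0,y_0)=k_T(x_0,y_0)\,e^{h(y_0)-h(x_0)}\,\E\!\left[e^{\int_0^T \Phi^h(\tilde x_s)\,ds}\right],
\]
so that the factor $e^{h(y_0)-h(x_0)}k_T(x_0,y_0)$ cancels and $\beta_T^h/\E[\beta_T^h]=Z_T$ appears inside each expectation. No further analysis is needed: this is an algebraic rewriting.

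For the Hessian estimate of $\log p_T^h$, the plan is to use the pointwise tensor identity
\[
\nabla d\log p_T^h=\f{\nabla d p_T^h}{p_T^h}-\nabla\log p_T^h\otimes \nabla\log p_T^h,
\]
already recorded in \S\ref{section-Hessian-estimates}. The first summand is handled by (\ref{Hessian-kernel-5}): H\"older gives $|Z_T|_{L^q}$ times the $L^{q'}$ estimates of $\tilde N_T$ and the $\tilde W^{(2)}$-integral from the two lemmas, producing the $|Z_T|_{L^q}(d^2/T^2+1/T+1)$ term. For the square of the gradient, apply the analogue of (\ref{Hessian-kernel-5}) at first order (derived from the gradient formula quoted after Theorem \ref{hessian-lemma}, namely $\nabla\log p_T^h(v)=\E[Z_T\cdot \tfrac{2}{T}\int_0^{T/2}\<d\{\tilde x_s\},\tilde W_s(v)\>]$), then Cauchy--Schwarz in the expectation together with It\^o's isometry and the pointwise bound $|\tilde W_s|^2\le e^{-\int_0^s \rho^h}\le e^{Ks}$. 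The quadratic variation of $\int_0^{T/2}\<d\{\tilde x_s\},\tilde W_s(v)\>$ is then at most $\int_0^{T/2} e^{Ks}\,ds=(e^{KT/2}-1)/K$, which after squaring and using $(e^{KT/2}-1)^2\le (e^{KT}-1)$ yields the factor $\tfrac{1}{T}\cdot\tfrac{e^{KT}-1}{KT}$ (times $|Z_T|_{L^2}^2$) stated in the conclusion.

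The only genuinely delicate point is the pairing of integrability exponents: one must pick $q,q'$ so that both $|Z_T|_{L^q}$ (equivalently $|\beta_T^h|_{L^q}/\E\beta_T^h$) is finite under \underline{\bf C4(b)} and simultaneously $q' p<2$ in the hypothesis of Lemma \ref{lemma8.5}; a standard small perturbation of $q'=1$ achieves both, and this determines the explicit form of $C(T,K,q,\delta_1,\delta_2)$ in terms of $a_1(\alpha,q',T)$, $a_2(K,T)$, and the moment bounds $b_i$ appearing in Definitions \ref{constants-1}--\ref{constants-2}. Everything else is bookkeeping.
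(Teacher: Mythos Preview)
Your approach is essentially the same as the paper's: specialize Theorem \ref{2nd-order-kernel} to $V=0$, apply H\"older against $\beta_T^h$ and invoke Lemmas \ref{lemma8.4}--\ref{lemma8.5}, divide by the elementary formula to obtain (\ref{Hessian-kernel-5}), and then combine with the tensor identity for $\nabla d\log p_T^h$.

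One small discrepancy in the gradient step. The paper uses the \emph{full-interval} first-order formula
\[
d\log p_T^h(v)=\E\!\left[Z_T\cdot \tfrac{1}{T}\int_0^T\langle \tilde W_s(v),\tilde u_s\,d\tilde B_s\rangle\right],
\]
so Cauchy--Schwarz plus It\^o's isometry give directly
\[
|d\log p_T^h|^2\le |Z_T|_{L^2}^2\cdot \tfrac{1}{T^2}\int_0^T e^{Ks}\,ds=|Z_T|_{L^2}^2\cdot\tfrac{1}{T}\cdot\tfrac{e^{KT}-1}{KT},
\]
which is exactly the stated constant. Your half-interval variant yields $\tfrac{4}{T}\cdot\tfrac{e^{KT/2}-1}{KT}$ instead, and your proposed conversion ``$(e^{KT/2}-1)^2\le e^{KT}-1$'' is not the relevant inequality here (no square of $e^{KT/2}-1$ actually appears after applying It\^o's isometry). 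If you want the constant exactly as written, just switch to the full-interval gradient formula; otherwise your bound is equally valid but with a slightly different explicit factor.
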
 

\begin{proof}
Let $v_1, v_2\in T_{x_0}M$. In Theorem \ref{2nd-order-kernel} where we take $V=0$
\begin{equs}
&\f{\nabla d p_T^{h}(v_1, v_2)}{k_T(x_0, y_0)}e^{h(x_0)-h(y_0)} \\
=& \E\left[ e^{\int_0^T \Phi^h(\tilde{x}_s)ds}\tilde N_{T}\right]
+\f 2 T 
\E \left[e^{\int_0^T \Phi^h(\tilde{x}_s)ds} \int_0^{T/2} \<d\{\tilde x_s\}, \tilde W_s^{(2)}(v_1,v_2)\> \right],\end{equs}
to see that for constant $p\ge 1$ and $p'$ satisfying $\f 1 p+\f 1 {p'}=1$,
$$\begin{aligned}
&\f{|\nabla d p_T^{h}(v_1, v_2)|}{k_T(x_0, y_0)}e^{h(x_0)-h(y_0)}\\
&\le \left|e^{\int_0^T \Phi^h(\tilde{x}_s)ds}\right|_{L^{p'}(\Omega)} \left(|N_T|_{L^p(\Omega)}+
\left|\f 2 T  \int_0^{T/2} \<d\{\tilde x_s\}, \tilde W_s^{(2)}(v_1,v_2)\>\right|_{L^p(\Omega)}
\right).
\end{aligned}
$$
 The first required estimate follows from  the following estimates in Lemma \ref{lemma8.5} and  Lemma \ref{lemma8.4}: 
\begin{equation*}
\begin{aligned}&\left|\tilde N_{T}\right |_{L^p(\Omega)}
\le c(p,n)\left(a_1(\alpha, p,T)+e^{ |K| T}\f {d^{2}(x_0, y_0)}{T^{ 2}} +a_2(K,T)  \f 1 T\right),\\
 &\left|\f 2 T \int_0^{T/2} \<d\{\tilde x_s\},  \tilde W_{s}^{(2)}(v_1, v_2)\>\right|_{L^p(\Omega)}
\le c(p,n)\left(b_4(2p)\f {d(x_0, y_0) }{T}+( {b_4(2p)} +{b_3(2)})\f 1 {\sqrt T} +A\right).
\end{aligned}
\end{equation*}
We take $q=p'$ and set $$S_T= a_1(\alpha, p,T)+A+ e^{ |K| T}\f {d^{2}(x_0, y_0)}{T^{ 2}} +
b_4(2p)\f {d(x_0, y_0) }{T}+\f {a_2(K,T)}{T}+ \f{{b_4(2p)} +b_3(2)} {\sqrt T}.$$
Then there exists a universal constant $c(p,n)$ s.t. \begin{equs}
& \left|\nabla d p_T^h(v_1, v_2|\right|
 \le c(q,n)T^{-\f n 2}e^{-\f {d^2(x_0, y_0)}{2T}}  J_{y_0}^{-\f 12}(x_0) {e^{h(y_0)-h(x_0)} }|\beta_T^h|_{L^{q}(\Omega)} S_T.
\end{equs}

Next by the Girsanov transform in Lemma \ref{lemma-Girsanov}, we see that
$$p_T^h(x_0,y_0) = k_T(x_0,y_0)e^{h(y_0)-h(x_0)} \E \left[ e^{\int_0^T \Phi^h(\tilde{x}_s)\,ds} \right],$$
formula (\ref{Hessian-kernel-5}) follows immediately.
To formula (\ref{Hessian-kernel-5}) we apply H\"older's inequality to see 
$$\begin{aligned}\left |\f{\nabla d p_T^h(v_1, v_2)} {p_T(x_0, y_0)}\right|
 \le & |Z_T|_{L^{p'}(\Omega)} 
c(p,n)\left(b_4(2p)\f {d(x_0, y_0) }{T}+( {b_4(2p)} +{b_3(2)}) \f 1 {\sqrt T}+A\right)\\
&{}+ |Z_T|_{L^{p'}(\Omega)} c(p,n)\left(a_1(\alpha, p,T)+e^{ |K| T}\f {d^{2}(x_0, y_0)}{T^{ 2}} +a_2(K,T)  \f 1 T\right).
\end{aligned}$$

Finally, the following formula holds,
$$
dp^{h}_T(\cdot,y_0)= \frac{1}{T}e^{h(y_0)-h(x_0)}k_T(x_0, y_0)\E \left[ e^{\int_0^T \Phi^h(\tilde x_s) ds}
 \left(\int_0^T \< \tilde{W}_r(\cdot),\tilde{u}_r d\tilde{B}_r\> \right)\right], $$
under our assumptions. This was proved in  \cite{Li-Thompson} under the assumption that $\Phi^h$ is assumed to be bounded from below.
It can be seen from the following formula
 $$dP^{h,V}_Tf(v) =\frac{1}{T}\E \left[  f(x_T) \int_0^T \< W_s(v),u_s dB_s\>\right],$$
holds for $f$ bounded under the conditions that $\rho^h$ bounded from below.
 The estimates in this section and the proof of Theorem \ref{2nd-order-kernel} allow us to conclude this
 under  \underline{\bf C4(b)} on the growth of $\Phi^h$, and use the exponential integrability of  the distance function from $y_0$, c.f. Lemma \ref{lemma:strong-1}.
In particular we obtain
$$
|d \log p^{h}_T(\cdot,y_0)|^2=\left| \E \left[ Z_T
 \left(\f 1 T\int_0^T \< \tilde{W}_r(\cdot),\tilde{u}_r d\tilde{B}_r\> \right)\right]\right|^2
 \le \E (|Z_T|^2) \f 1 {T} \left(\f {e^{KT}-1}{KT}\right).$$
 Putting them together using $\nabla d\log p_t=\f {\nabla d p_t} {p_t}-\nabla \log p_t\otimes \nabla \log p_t$,
 $$\begin{aligned}\left |{\nabla d \log p_T^h(v_1, v_2)}\right|
 \le & |Z_T|_{L^{p'}(\Omega)} 
c(p,n)\left(b_4(2p)\f {d(x_0, y_0) }{T}+( {b_4(2p)} +{b_3(2)}) \f 1 {\sqrt T}+A\right)\\
&{}+ |Z_T|_{L^{p'}(\Omega)} c(p,n)\left(a_1(\alpha, p,T)+e^{ |K| T}\f {d^{2}(x_0, y_0)}{T^{ 2}} +a_2(K,T)  \f 1 T\right)\\
&+  |Z_T|^2_{L^{2} (\Omega)} \f 1 {T} \left(\f {e^{KT}-1}{KT}\right).
\end{aligned}$$
This completes the proof of the corollary.
\end{proof}

 \subsection*{Summary}
 To conclude, we obtained 
 exact Gaussian estimates for $\nabla d p_t^h$ on manifolds with a pole under condition \underline{\bf C4};
and formulas for $\Hess P_t^{h,V}f$ and for $p_t^{h,V}$ on general manifolds,
 from which estimates are given under condition \underline{\bf C2}.

\bibliographystyle{alpha}
\bibliography{Riemannian-bridges}

\end{document}